\newtheorem{theorem}{Theorem}[section]
\newtheorem{definition}[theorem]{Definition}
\newtheorem{lemma}[theorem]{Lemma}
\newtheorem{cor}[theorem]{Corollary}
\newtheorem{remark}[theorem]{Remark}
\newtheorem{claim}[theorem]{Claim}
\newtheorem{prop}[theorem]{Proposition}
\theoremstyle{remark}
\newcommand{\be}{\begin{enumerate}}
\newcommand{\ee}{\end{enumerate}}
\newcommand{\beq}{\begin{equation}}
\newcommand{\eeq}{\end{equation}}
\def\N{{\mathbb{N}}}
\def\Z{{\mathbb{Z}}}
\def\MA{{\mathbb{A}}}
\def\MB{{\mathbb{B}}}
\def\MC{{\mathbb{C}}}
\def\barx{\hat{x}}
\numberwithin{equation}{section}
\begin{document}

\title{Groups elementarily equivalent to a finitely generated free metabelian group}
\titlemark{Groups elementarily equivalent to metabelian group}



\emsauthor{1}{
	\givenname{Olga}
	\surname{Kharlampovich}
	\mrid{191704}
	\orcid{0000-0002-4891-5838}}{O.~Kharlampovich}
\emsauthor{2}{
	\givenname{Alexei}
	\surname{Miasnikov}
	\mrid{670299}
	\orcid{0009-0006-7271-0011}}{A. Miasnikov}

\Emsaffil{1}{
	\department{Department of Mathematics}
	\organisation{CUNY Graduate Center and Hunter College}
	\address{695 Park Ave}
	\zip{10065}
	\city{New York}
	\country{USA}
	\affemail{okharlampovich@gmail.com}}
\Emsaffil{2}{
	\department{1}{Department of Mathematical Sciences}
	\organisation{1}{Stevens Institute}
	\address{1}{1 Castle Point Terrace}
	\zip{1}{07030}
	\city{1}{Hoboken}
    \country{1}{USA}
	\affemail{amiasnikov@gmail.com}}

\classification{20A15}

\keywords{metabelian group, first-order equivalence}

\begin{abstract}
We describe groups elementarily equivalent to a free metabelian group with $n$ generators. We also explore an exponentiation that naturally occurs in metabelian groups.
\end{abstract}

\maketitle

\tableofcontents

\section{Introduction}

In this paper we describe all groups elementarily equivalent to a free metabelian group $G$ of finite rank $> 1$. To do this, we first prove that $G$ is regularly bi-interpretable with $\Z$. Then following \cite{DM1}   we show that groups elementarily equivalent to $G$ are precisely the non-standard versions $G(\tilde \Z)$ of $G$, where $\tilde \Z \equiv \Z$,   and describe their algebraic structure. Along the way, we prove that the set of all bases of $G$ is absolutely definable in $G$. For this we provide a new characterization of bases of $G$ in terms of the normal forms and their coordinate functions which is interesting in its own right. Furthermore, this regular bi-interpretation of $G$ with $\Z$ is  strong and injective, which gives many interesting model-theoretic properties of $G$.  Thus $G$ is rich, that is, the first-order logic over $G$ is as expressive as the weak second-order logic over $G$ (see \cite{KMS}), $G$ admits elimination of imaginaries and the projective logical geometries over $G$ and $\Z$ form equivalent categories (see \cite{DM2}), etc. An important part of our characterization of the algebraic structure of non-standard models $G(\tilde \Z)$ comes from the theory of exponential groups (see \cite{MR1,MR2,AMN}). It turns out that every non-standard group $G(\tilde \Z)$ is an exponential $\tilde \Z$-group and we can describe the $\tilde \Z$-exponentiation in $G(\tilde \Z)$.

\section{Interpretability and bi-interpretability}
One can use the model-theoretic notion of interpretability and bi-interpretability to study structures
elementarily equivalent to a given one. In this paper, we are going to do this for free metabelian groups.
We recall here some precise definitions and several known facts that may not be very familiar to algebraists. 

A \emph{language} (or \emph{a signature}) $L$ is a triple $(Fun, Pr, C)$, where $Fun = \{f, \ldots \}$ is a  set of functional symbols $f$ coming together with their arities  $n_f \in \mathbb{N}$,  $Pr$ is  a set of relation (or predicate) symbols $Pr= \{P, \ldots \}$ coming together with their arities  $n_P \in \mathbb{N}$,  and a set of constant symbols $C = \{c, \ldots\}$. Sometimes we write $f(x_1, \ldots,x_n)$ or $P(x_1, \ldots,x_n)$ to show that $n_f = n$ or $n_P = n$.  Usually we denote variables by small letters $x,y,z, a,b, u,v, \ldots$, while the same symbols with bars $\bar x, \bar y,  \ldots$ denote tuples of the corresponding variables, say   $\bar x = (x_1, \ldots,x_n)$.  In this paper we always assume, if not said otherwise, that the languages we consider are finite. 
The following languages appear frequently throughout the text:  
the language of groups $\{ \cdot, ^{-1}, 1\}$,   where 1 is the constant symbol for the identity element,  $\cdot$ is the binary multiplication symbol, and  $^{-1}$ is the symbol of inversion; and the language of unitary  rings $\{+, \cdot, 0,1 \}$ with the standard symbols for addition, multiplication, the additive identity $0$, and the unity 1.

A structure in the language $L$ (an $L$-structure) with the base set $A$ is sometimes denoted by $\mathbb{A} = \langle A; L\rangle$ or simply by 
$\mathbb{A} = \langle A; f, \ldots, P, \ldots,c, \ldots \rangle$.  For a given structure $\mathbb{A}$ by $L(\mathbb{A})$ we denote the language of $\mathbb{A}$.   When the language $L$ is clear from the context, we follow the standard algebraic practice and denote the structure $\mathbb{A} = \langle A; L\rangle$ simply by $A$. 

Let $\mathbb{B} = \langle B ; L\rangle$ be a structure. A subset $A \subseteq B^n$ is called {\em definable}\index{deinable subset} in $\mathbb{B}$ if there is a formula $\varphi(x_1, \ldots,x_n)$ (without parameters) in $L(\mathbb{B})$ such that  $A = \{(b_1,\ldots,b_n) \in B^n \mid \mathbb{B} \models \varphi(b_1, \ldots,b_n)\}$. In this case we denote $A$ by $\varphi(B^n)$ or $\varphi(\MB)$ and  say that  \emph{$\varphi$ defines $A$} in $\mathbb{B}$.  Similarly, an operation $f$ on the subset  $A$ is definable in $\mathbb{B}$ if its graph is definable in $\mathbb{B}$.  A constant $c$ is definable if the relation $x=c$ is definable. An $n$-ary predicate $P(x_1,\ldots ,x_n)$ is definable in $\mathbb{B}$ if the set $\{(b_1,\ldots ,b_n)\in\mathbb{B}^n| P(b_1,\ldots ,b_n) \ { \rm is\  true}\}$
is definable in $\mathbb{B}$. 

\begin{definition} \label{de:interpretable} An algebraic  structure $\mathbb{A} = \langle A ;f, \ldots, P, \ldots, c, \ldots\rangle$  is absolutely interpretable (or $0$-interpretable)  in a structure $\mathbb{B}$  if there is a  subset $A^* \subseteq B^n$  definable in $\mathbb{B}$, an equivalence relation $\sim$ on $A^*$ definable in $\mathbb{B}$, operations  $f^*, \ldots, $ predicates $P^*, \ldots, $ and constants $c^*, \ldots, $ on the quotient set $A^*/{\sim}$ all interpretable in $\mathbb{B}$ such that the structure $\mathbb{A}^* = \langle A^*/{\sim}; f^*, \ldots, P^*, \ldots,c^*, \ldots, \rangle$ is isomorphic to $\mathbb{A}$.
 \end{definition}

More formally, an interpretation  of $\MA$ in $\MB$ is described  by the following set of formulas in the language $L(\MB)$
\[
\Gamma  = \{U_\Gamma(\bar x), E_\Gamma(\bar x_1, \bar x_2), Q_\Gamma(\bar x_1, \ldots,\bar x_{t_Q}) \mid Q  \in L(\MA)\}
\]
(here $\bar x$ and $\bar x_i$ are $n$-tuples of variables)  which  interpret $\mathbb{A}$ in $\mathbb{B}$ (as in the definition \ref{de:interpretable} above). Namely,  $U_\Gamma$  defines in $\mathbb{B}$ a subset  $A_\Gamma  = U_\Gamma(B^n)  \subseteq B^n$, $E_\Gamma$  defines in $\MB$ an  equivalence relation $\sim_\Gamma$ on $A_\Gamma$, and the formulas $Q_\Gamma$ define   functions $f_\Gamma$, predicates $P_\Gamma$, and constants $c_\Gamma$ that interpret the corresponding symbols from $L(\mathbb{A})$ on the quotient set $A_\Gamma/\sim_\Gamma$ in such a way that the $L$-structure $\Gamma(\MB) = \langle A_\Gamma/\sim_\Gamma; f_\Gamma, \ldots, P_\Gamma, \ldots, c_\Gamma, \ldots \rangle $ is isomorphic to $\MA$.  Note, that   we interpret a constant $c \in L(\MA)$ in the structure $\Gamma(\MB)$ by the $\sim_\Gamma$-equivalence  class of some tuple $\bar b_c \in A_\Gamma$ defined in $\MB$ by the formula $Q_c$. We write $\mathbb{A} \simeq \Gamma(\mathbb{B})$ if $\Gamma$ interprets $\MA$ in $\MB$ as described above and refer to $\Gamma$ as an {\em interpretation code}\index{interpretation code} or just {\em code}.  The number $n$ is called the dimension of $\Gamma$, denoted $n = dim\Gamma$. By $\mu_\Gamma$ we denote a a surjective map $A_\Gamma \to \MA$ (here $\MA = \langle A;L(\MA)\rangle$) that gives rise to an isomorphism   $\bar \mu_\Gamma: \Gamma(\MB) \to \mathbb{A}$.  We refer to  this map $\mu_\Gamma$ as the \emph{the coordinate map}\index{coordinate map} of the interpretation $\Gamma$. When the formula $E_\Gamma$ defines the identity relation $(x_1=x^\prime_1)\wedge\ldots\wedge(x_n=x^\prime_n)$, the surjection $\mu_\Gamma$ is injective, in which case, $\Gamma(\MB)$ is called an {\em injective interpretation}. Finally, notation $\MA\stackrel{\Gamma}{\rightsquigarrow}\MB$   means that  $\MA$ is interpretable in $\MB$ by the code $\Gamma$. 

More generally, the formulas that interpret $\mathbb{A}$ in $\mathbb{B}$ may contain elements from $\mathbb{B}$ that are not in the language $L(\mathbb{B})$, i.e., some parameters, say $p_1, \ldots,p_k \in B$.  In this case we assume that all the formulas from the code $\Gamma$ have a tuple  of extra variables $\bar y = (y_1, \ldots,y_k)$  for parameters in $\MB$: 
\begin{equation} \label{eq:code}
\Gamma =  \{U_\Gamma(\bar x,\bar y), E_\Gamma(\bar x_1, \bar x_2,\bar y), Q_\Gamma(\bar x_1, \ldots,\bar x_{t_Q},\bar y) \mid Q \in L(\mathbb{A})\}
\end{equation}
so that after the assignment $y_1 \to p_1, \ldots,y_k \to p_k$ the code  interprets $\mathbb{A}$ in $\mathbb{B}$.  In this event we write $\mathbb{A} \simeq \Gamma(\mathbb{B},\bar p)$ (here $\bar p = (p_1, \ldots,p_k)$), and say that $\mathbb{A}$ is interpretable in $\mathbb{B}$  by the code $\Gamma$ with parameters $\bar p$. In the case when $\bar p = \emptyset$ one gets again the absolute interpretability. Sometimes, it is convenient to consider interpretations $\MA \simeq \Gamma(\MB,\bar p)$  together with their coordinate maps, i.e., as triples $(\Gamma,\bar p,\mu_\Gamma)$.

We say that a structure $\mathbb{A}$ is interpreted in a given structure $\mathbb{B}$ \emph{uniformly}\index{uniform interpretation} with respect to a subset $D \subseteq B^k$ if  there is a code $\Gamma$ such that $\mathbb{A} \simeq \Gamma(\mathbb{B},\bar p)$ for every tuple of parameters $\bar p \in D$. If $\mathbb{A}$ is interpreted in  $\MB$ uniformly with respect to a 0-definable subset $D \subseteq B^k$ then we say that $\MA$ is \emph{regularly interpretable}\index{regular interpretation} in $\MB$ and write in this case $\MA \simeq \Gamma(\MB,\varphi)$, provided  $D$ is defined by $\varphi$ in $\MB$. 
This notion appeared first in \cite[Section 1.1]{Miasn} or \cite{Miasn1}, and it is similar to the notion of {\em interpretability with definable parameters} \cite[Remark 5, p.215]{Hodges}. Note that the absolute interpretability is a particular case of the regular interpretability where the set  $D$ is empty.


It is known that the relation $\MA\rightsquigarrow\MB$  is transitive on algebraic structures (see, for example,~\cite{DM1, Hodges}). The proof of this fact is based on the notion of $\Gamma$-translation and composition of codes, which we present now. 

Let $\Gamma$ be the code (\ref{eq:code}). Then for    any formula $\varphi(x_1,\ldots,x_m)$ in the language $L(\MA)$ there is  a formula $\varphi_\Gamma(\bar x_1,\ldots,\bar x_m, \bar y)$ in the language $L(\MB)$, the $\Gamma$-translation of $\varphi$,  such that if $\MA\simeq\Gamma(\MB,\bar p)$, then for any coordinate map $\mu_\Gamma\colon A_\Gamma\to A$ one has
$$
\MA\models \varphi(a_1,\ldots, a_m) \iff \MB\models\varphi_\Gamma(\mu_\Gamma^{-1}(a_1),\ldots,\mu^{-1}_\Gamma(a_m),\bar p)
$$
for any elements $a_i\in A$ (see~\cite{DM1, Hodges}). Here $\mu_\Gamma^{-1}(a_i)$ means an arbitrary preimage of $a_i$ under $\mu_\Gamma$.   Furthermore,  for any elements $\bar b_i\in B^n$ if $\MB\models\varphi_\Gamma(\bar b_1,\ldots,\bar b_m,\bar p)$ then  $\bar b_i\in \mu_\Gamma^{-1}(a_i)$ for some $a_i\in A$ with $\MA\models \varphi(a_1,\ldots, a_m)$.

\begin{definition} \label{def:code-composition}
Let $\MA, \MB$, $\MC$ be algebraic structures. Consider codes 
$$\Gamma =  \{U_\Gamma(\bar x,\bar y), E_\Gamma(\bar x, \bar x^\prime,\bar y), Q_\Gamma(\bar x_1, \ldots,\bar x_{t_Q},\bar y) \mid Q \in L(\MA)\}$$
and 
$$
\Delta=\{U_\Delta(\bar u,\bar z), E_\Delta(\bar u, \bar u^\prime,\bar z), Q_\Delta(\bar u_1, \ldots,\bar u_{t_Q},\bar z) \mid Q \in L(\MB)\}
$$
which consist of $L(\MC)$-formulas. Then the {\em composition} of the codes $\Gamma$ and $\Delta$ is the code
$$
\Gamma \circ \Delta = \{U_{\Gamma\circ\Delta}, E_{\Gamma\circ\Delta}, Q_{\Gamma\circ\Delta} \mid Q \in L(\MA)\}=\{(U_\Gamma)_\Delta, (E_\Gamma)_\Delta, (Q_\Gamma)_\Delta \mid Q \in L(\MA)\}.
$$
\end{definition}

The following is an important technical result on the transitivity of interpretations.

\begin{lemma}[\cite{DM1}]\label{le:int-transitivity}
Let $\MA=\langle A;L(\MA)\rangle, \MB=\langle B;L(\MB)\rangle$ and $\MC=\langle C;L(\MC)\rangle$ be algebraic structures and $\Gamma,\Delta$ be codes as above. If $\MA\stackrel{\Gamma}{\rightsquigarrow}\MB$ and 
$\MB\stackrel{\Delta}{\rightsquigarrow}\MC$ then $\MA\stackrel{\Gamma\circ\Delta}{\rightsquigarrow}\MC$.

Furthermore, the following conditions hold:
\begin{enumerate}[label=\arabic*)]
    \item If $\bar p,\bar q$ are parameters and $\mu_\Gamma, \mu_\Delta$ are  coordinate maps of interpretations $\Gamma,\Delta$ then $(\bar{\bar p},\bar q)$, where $\bar{\bar p} \in \mu_\Delta^{-1}(\bar p)$, are parameters for $\Gamma\circ\Delta$; 
    
    \item $\mu_\Gamma\circ\mu_\Delta=\mu_{\Gamma}\circ\mu^n_{\Delta}\big|_{U_{\Gamma\circ\Delta}(\MC, (\bar{\bar p},\bar q))}$ is a coordinate map of the interpretation $\MA\simeq \Gamma\circ\Delta(\MC,(\bar{\bar p},\bar q))$ and any coordinate map $\mu_{\Gamma\circ\Delta}\colon U_{\Gamma\circ\Delta}(\MC, (\bar{\bar p},\bar q))\to A$ has a form $\mu_{\Gamma1}\circ\mu_\Delta$ for a suitable coordinate map $\mu_{\Gamma1}$ of the interpretation $\MA\simeq\Gamma(\MB,\bar p)$, provided $\mu_\Delta$ is fixed. 
\end{enumerate}
\end{lemma}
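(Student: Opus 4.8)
The plan is to prove Lemma~\ref{le:int-transitivity} by explicitly constructing the composition code $\Gamma\circ\Delta$ and tracking how the $\Gamma$-translation mechanism interacts with the $\Delta$-translation mechanism. The key observation is that $\Gamma$ interprets $\MA$ in $\MB$ using $L(\MB)$-formulas, and $\Delta$ interprets $\MB$ in $\MC$ using $L(\MC)$-formulas, so each $L(\MB)$-formula appearing in the code $\Gamma$ can itself be $\Delta$-translated into an $L(\MC)$-formula. This is exactly what Definition~\ref{def:code-composition} encodes: $(U_\Gamma)_\Delta$, $(E_\Gamma)_\Delta$, and $(Q_\Gamma)_\Delta$ are the $\Delta$-translations of the components of $\Gamma$. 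So the first step is purely bookkeeping: write down $\dim(\Gamma\circ\Delta) = (\dim\Gamma)\cdot(\dim\Delta)$, organize the variables of $\Gamma\circ\Delta$ as $m$-tuples of $n$-tuples of $C$-variables (where $n=\dim\Delta$, $m=\dim\Gamma$), and specify that the parameter tuple is $(\bar{\bar p},\bar q)$ with $\bar{\bar p}\in\mu_\Delta^{-1}(\bar p)$ a fixed preimage in $C^{nk}$.

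**Next I would** verify that $\Gamma\circ\Delta$ actually interprets $\MA$ in $\MC$. This is where the $\Gamma$-translation property stated in the excerpt does the real work. Set $B_\Delta = U_\Delta(\MC,\bar q)$ with coordinate map $\mu_\Delta\colon B_\Delta\to B$. Applying the $\Delta$-translation correspondence to the formula $U_\Gamma(\bar x,\bar y)$ (viewed as an $L(\MB)$-formula in the $n$ variables $\bar x$ over $\MB$, with the parameter slot filled by $\bar p$): a tuple $(\bar c_1,\dots,\bar c_n)$ of elements of $B_\Delta^{\dim\Delta}$ satisfies $(U_\Gamma)_\Delta$ in $\MC$ at parameters $(\bar{\bar p},\bar q)$ iff $(\mu_\Delta(\bar c_1),\dots,\mu_\Delta(\bar c_n))$ satisfies $U_\Gamma$ in $\MB$ at $\bar p$. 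Hence $\mu_\Delta^{\,n}$ restricted to $U_{\Gamma\circ\Delta}(\MC,(\bar{\bar p},\bar q))$ is a well-defined surjection onto $A_\Gamma = U_\Gamma(\MB,\bar p)$. The same translation correspondence applied to $E_\Gamma$ shows $(E_\Gamma)_\Delta$ defines the pullback of $\sim_\Gamma$, which is an equivalence relation on $U_{\Gamma\circ\Delta}(\MC,(\bar{\bar p},\bar q))$; and applied to each $Q_\Gamma$ shows $(Q_\Gamma)_\Delta$ defines the pullback of the operation/relation $Q_\Gamma$. Composing the quotient surjection $U_{\Gamma\circ\Delta}(\MC,(\bar{\bar p},\bar q)) \to A_\Gamma$ with the isomorphism $\bar\mu_\Gamma\colon\Gamma(\MB,\bar p)\to\MA$ gives the desired isomorphism $(\Gamma\circ\Delta)(\MC,(\bar{\bar p},\bar q))\cong\MA$, and the composite coordinate map is visibly $\mu_\Gamma\circ\mu_\Delta^{\,n}$ as claimed in part~2). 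The statement about arbitrary coordinate maps $\mu_{\Gamma\circ\Delta}$ factoring as $\mu_{\Gamma1}\circ\mu_\Delta$ follows because, once $\mu_\Delta$ is fixed, any coordinate map of $\Gamma\circ\Delta$ descends (through the fibers of $\mu_\Delta^{\,n}$) to a coordinate map of $\Gamma$ over $\MB$.

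**The main obstacle** — really the only subtle point, since most of this is definitional chasing — is keeping the variable-indexing and parameter-indexing rigorously consistent: the parameters $\bar y$ of $\Gamma$ are elements of $B$, so in forming $\Gamma\circ\Delta$ they must be replaced by $n$-tuples of $C$-variables and ultimately by the fixed preimage $\bar{\bar p}\in\mu_\Delta^{-1}(\bar p)$, and one must check that the resulting interpretation does not depend (up to the allowed ambiguity in coordinate maps) on which preimage is chosen. Concretely, if $\bar{\bar p}'$ is another preimage of $\bar p$ under $\mu_\Delta$, then $\bar{\bar p}'\sim_\Delta\bar{\bar p}$, and one uses that the $\Delta$-translations of the $\Gamma$-formulas respect $\sim_\Delta$ in their parameter arguments — which is itself an instance of the $\Delta$-translation property applied to formulas of $\MB$ that mention the parameter constants. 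Once this is pinned down, parts~1) and~2) of the lemma are immediate from the construction, so I would present the proof as: (i) define $\Gamma\circ\Delta$ and its parameters; (ii) invoke $\Delta$-translation on $U_\Gamma, E_\Gamma$, and each $Q_\Gamma$ to identify the interpreted structure with $\Gamma(\MB,\bar p)$; (iii) read off the coordinate-map formula and its factorization. I expect this to occupy well under a page, citing \cite{DM1, Hodges} for the underlying translation lemma rather than reproving it.
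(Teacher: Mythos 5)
Your proof is correct and follows exactly the approach the paper sets up immediately before the lemma statement: apply the $\Delta$-translation property to each of the $L(\MB)$-formulas $U_\Gamma$, $E_\Gamma$, $Q_\Gamma$ to show that the composed code $\Gamma\circ\Delta$ from Definition~\ref{def:code-composition} interprets $\MA$ in $\MC$, with coordinate map given by composing the quotient map with $\mu_\Gamma$. The paper itself cites~\cite{DM1} rather than reproving the lemma, and your argument is the intended one; the only nit is a small notational slip where you introduce $m=\dim\Gamma$, $n=\dim\Delta$ but then write $\mu_\Delta^{\,n}$ (following the paper's convention $n=\dim\Gamma$) where by your own conventions the superscript should be $m$.
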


Observe, that composition of absolute (regular) interpretations is absolute (regular).

Now we discuss a very strong version of  mutual interpretability of two structures, so-called {\em bi-interpretability}.

\begin{definition}\label{def:bi}
Algebraic structures $\MA$ and $\MB$ are called {\em strongly bi-interpretable} (with parameters) in each other, if 
\begin{enumerate}[label=\arabic*)]
\item there exists an interpretation $(\Gamma,\bar p,\mu_\Gamma)$ of $\MA$ into $\MB$ and an interpretation $(\Delta,\bar q,\mu_\Delta)$ of $\MB$ into $\MA$, so the algebraic structures $\Gamma\circ\Delta(\MA,(\bar{\bar p},\bar q))$ and  $\Delta\circ\Gamma(\MB,(\bar{\bar q},\bar p))$  are uniquely defined and $\Gamma\circ\Delta(\MA,(\bar{\bar p},\bar q))$ is isomorphic to $\MA$, while $\Delta\circ\Gamma(\MB,(\bar{\bar q},\bar p))$ is isomorphic to $\MB$; 
\item the composition $\mu_\Gamma\circ\mu_\Delta\colon U_{\Gamma \circ \Delta}(\MA,(\bar{\bar p},\bar q)) \to A$ is definable in $\MA$ and the composition $\mu_\Delta\circ\mu_\Gamma\colon U_{\Delta \circ \Gamma}(\MB,(\bar{\bar q},\bar p)) \to B$ is definable in $\MB$.
\end{enumerate}
In this case, we additionally say that
$\MA$ and $\MB$ are {\em strongly injectively bi-interpretable}, if the interpretations $\Gamma$ and $\Delta$ are injective.  
\end{definition}

Note that there is another slightly different notion of {\em bi-interpretation}, which for contrast we sometimes call a {\em weak bi-interpretation}, where in the above definition the condition 2) that requires definability of the maps $\mu_\Gamma\circ\mu_\Delta$ and $\mu_\Delta\circ\mu_\Gamma$ is replaced by a weaker one that requires definability of some coordinate maps $A_{\Gamma \circ \Delta} \to A$ and $B_{\Delta \circ \Gamma} \to B$.  Often, authors do not even mention the difference, implicitly assuming either one or another. To be precise, we endorse these two notions explicitly. Observe that the bi-interpretation defined in the books~\cite{Hodges} and~\cite{KMS} is weak, but in the paper~\cite{AKNS} it is strong. There are many interesting applications of strong bi-interpretations which we cannot derive from the weak ones.

Algebraic structures  $\MA$ and $\MB$ are called {\em 0-bi-interpretable} or {\em absolutely  bi-interpretable}\index{absolulutely  by-interpretable} (strongly) in each other if in the definition above the tuples of parameters  $p$ and $q$ are empty.  

Unfortunately, 0-bi-interpretability is rather rare. Indeed,  if $\MA$ and $\MB$ are 0-bi-interpretable in each other then their groups of automorphisms are isomorphic \cite{Hodges}. Bi-interpretability with parameters occurs much more often, but it gives much less for applications, in particular, it is not applicable for first-order classification problems (addition of  constants changes the language).

Fortunately, there is a notion of regular bi-interpretability, which is less restrictive, occurs more often, and which  enjoys many properties of 0-bi-iterpretability.

\begin{definition}\label{def:reg}
Algebraic structures $\MA$ and $\MB$ are called {\em regularly bi-interpretable}, if  
\begin{enumerate}[label=\arabic*)]
\item there exist a regular interpretation $(\Gamma,\varphi)$ of $\MA$ in $\MB$ and a regular interpretation $(\Delta,\psi)$ of $\MB$ in $\MA$;
\item there exists formula $\theta_\MA(\bar u, x, \bar r)$ in $L(\MA)$, where $|\bar u|={\dim\Gamma\cdot\dim\Delta}$, $|\bar r|={\dim_{par}\Gamma\circ\Delta}$, such that for any tuple $\bar r_0\in \varphi_\Delta\wedge \psi(\MA)$ the formula $\theta_\MA(\bar u, x, \bar r_0)$ defines some coordinate map $U_{\Gamma\circ\Delta}(\MA,\bar r_0)\to A$;
\item there exists formula $\theta_\MB(\bar u, x, \bar t)$ in $L(\MB)$, where $|\bar u|={\dim\Gamma\cdot\dim\Delta}$, $|\bar t|={\dim_{par}\Delta\circ\Gamma}$, such that for any tuple $\bar t_0\in \psi_\Gamma\wedge \varphi(\MB)$ the formula $\theta_\MB(\bar u, x, \bar t_0)$ defines some coordinate map $U_{\Delta\circ\Gamma}(\MB,\bar t_0)\to B$. 
\end{enumerate}
Regular bi-interpretation is called {\em injective} if both $(\Gamma,\varphi)$ and $(\Delta,\psi)$ are injective.
\end{definition}

\begin{definition}\label{def:st_reg}
We say that $\MA$ and $\MB$ are {\em strongly regularly bi-interpretable}, if they are regularly bi-interpretable, i.\,e., 1)--3) hold, and additionally 
\begin{enumerate}
\item[4)] for any 
pair of parameters $(\bar p, \bar q)$, $\bar p\in \varphi(\MB)$, $\bar q\in \psi(\MA)$, there exists a pair of coordinate maps $(\mu_\Gamma,\mu_\Delta)$ for interpretations $(\Gamma,\bar p)$ and $(\Delta,\bar q)$, such that for any $\bar r_0=(\bar{\bar p},\bar q)$, $\bar{\bar p}\in \mu^{-1}_\Delta(\bar p)$, and $\bar t_0=(\bar{\bar q},\bar p)$, $\bar{\bar q}\in \mu^{-1}_\Gamma(\bar q)$, the coordinate maps ${\mu_\Gamma\circ\mu_\Delta\colon} U_{\Gamma\circ\Delta}(\MA,\bar r_0)\to A$ and ${\mu_\Delta\circ\mu_\Gamma\colon} U_{\Delta\circ\Gamma}(\MB,\bar t_0)\to B$ are defined in $\MA$ and $\MB$ correspondingly by the formulas $\theta_\MA(\bar u, x, \bar r_0)$ and $\theta_\MB(\bar u, x, \bar t_0)$.
\end{enumerate}
\end{definition}


\begin{theorem}[\cite{DM1,MN}] \label{th:main}
Let $\mathbb A$ and $\mathbb B$ be regularly bi-interpretable in each other, so $\mathbb A \cong \Gamma(\mathbb B,\varphi)$ and $\mathbb B\cong\Delta(\mathbb A,\psi)$. Then
\begin{enumerate}[1)]
\item For any $\widetilde{\mathbb B}\equiv\mathbb B$ the code $(\Gamma,\varphi)$ regularly interprets a structure $\widetilde{\mathbb A} \simeq \Gamma(\widetilde{\mathbb B},\varphi)$ in $\widetilde{\mathbb B}$ such that  $\widetilde{\mathbb A} \equiv \mathbb A$;
\item Every $L(\mathbb A)$-structure $\widetilde{\mathbb A}$ elementarily equivalent to $\mathbb A$ is isomorphic to $\Gamma(\widetilde{\mathbb B},\varphi)$ for some $\widetilde{\mathbb B}\equiv\mathbb B$;
\item For any $\mathbb B_1\equiv\mathbb B\equiv\mathbb B_2$ one has 
\[
\Gamma(\mathbb B_1,\varphi) \cong \Gamma(\mathbb B_2,\varphi) \iff \mathbb B_1 \cong \mathbb B_2.
\]
\end{enumerate}
\end{theorem}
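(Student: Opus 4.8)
The plan is to reduce all three assertions to a single principle: ``being an interpretation'', and more generally ``being a coordinate map of an interpretation'', are first-order phenomena, so the statements in play are expressed by (schemes of) sentences in $L(\mathbb B)$ or $L(\mathbb A)$ that are preserved under $\equiv$. Throughout I would use the $\Gamma$-translation, the composition of codes (Definition~\ref{def:code-composition}), and Lemma~\ref{le:int-transitivity}.

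\textbf{Part 1).} First I would record the recursive set $\Phi$ of $L(\mathbb B)$-sentences expressing that $(\Gamma,\varphi)$ is a well-defined regular interpretation: $\exists\bar y\,\varphi(\bar y)$; that for every $\bar p$ with $\varphi(\bar p)$ the formula $E_\Gamma(\cdot,\cdot,\bar p)$ defines an equivalence relation on $U_\Gamma(\cdot,\bar p)$ which is a congruence for the formulas $Q_\Gamma(\cdot,\bar p)$, so that $\Gamma(\cdot,\bar p)$ is a genuine $L(\mathbb A)$-structure; and the scheme, one sentence per $L(\mathbb A)$-sentence $\sigma$, asserting $\sigma_\Gamma(\bar p)\leftrightarrow\sigma_\Gamma(\bar p')$ whenever $\varphi(\bar p)\wedge\varphi(\bar p')$ (uniformity). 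Since $\mathbb B\models\Phi$ and $\widetilde{\mathbb B}\equiv\mathbb B$, also $\widetilde{\mathbb B}\models\Phi$, so $\Gamma(\widetilde{\mathbb B},\varphi)$ is a well-defined $L(\mathbb A)$-structure, independent of the admissible parameter at least up to elementary equivalence (all Part 1) needs). For an $L(\mathbb A)$-sentence $\sigma$ put $\sigma^{\ast}:=\forall\bar y\,(\varphi(\bar y)\to\sigma_\Gamma(\bar y))$; then $\mathbb B\models\sigma^{\ast}\iff\mathbb A\models\sigma$ by $\mathbb A\cong\Gamma(\mathbb B,\varphi)$ and the translation property, using $\varphi(\mathbb B)\ne\emptyset$ and that every admissible parameter yields an interpretation of $\mathbb A$. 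Hence $\mathbb A\models\sigma\iff\mathbb B\models\sigma^{\ast}\iff\widetilde{\mathbb B}\models\sigma^{\ast}\iff\Gamma(\widetilde{\mathbb B},\varphi)\models\sigma$, the last equivalence being the translation property applied inside $\widetilde{\mathbb B}$; thus $\Gamma(\widetilde{\mathbb B},\varphi)\equiv\mathbb A$.

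\textbf{Part 2).} Given $\widetilde{\mathbb A}\equiv\mathbb A$, apply Part 1) to the regular interpretation $(\Delta,\psi)$ of $\mathbb B$ in $\mathbb A$: this produces $\widetilde{\mathbb B}:=\Delta(\widetilde{\mathbb A},\psi)$ with $\widetilde{\mathbb B}\equiv\mathbb B$. It remains to see $\Gamma(\widetilde{\mathbb B},\varphi)\cong\widetilde{\mathbb A}$. By condition~2) of Definition~\ref{def:reg}, for every admissible $\bar r_0\in\varphi_\Delta\wedge\psi(\mathbb A)$ the formula $\theta_{\mathbb A}(\bar u,x,\bar r_0)$ defines a coordinate map $U_{\Gamma\circ\Delta}(\mathbb A,\bar r_0)\to A$, i.e.\ an isomorphism $\Gamma\circ\Delta(\mathbb A,\bar r_0)\cong\mathbb A$. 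The assertion ``for every $\bar r_0$ satisfying $\varphi_\Delta\wedge\psi$, the formula $\theta_{\mathbb A}(\cdot,\cdot,\bar r_0)$ is the graph of a map from $U_{\Gamma\circ\Delta}(\cdot,\bar r_0)$ onto the whole structure that is constant on $E_{\Gamma\circ\Delta}$-classes, injective modulo them, and compatible with every symbol of $L(\mathbb A)$ through the formulas of $\Gamma\circ\Delta$'' is a single $L(\mathbb A)$-sentence $\xi_{\mathbb A}$, true in $\mathbb A$ and hence in $\widetilde{\mathbb A}$. By Lemma~\ref{le:int-transitivity}, choosing $\bar r_0=(\bar{\bar p},\bar q)$ with $\bar q\in\psi(\widetilde{\mathbb A})$ and $\bar{\bar p}\in\mu_\Delta^{-1}(\bar p)$, $\bar p\in\varphi(\widetilde{\mathbb B})$ (possible, as these parameter sets are nonempty), we get $\bar r_0\in\varphi_\Delta\wedge\psi(\widetilde{\mathbb A})$ and $\Gamma\circ\Delta(\widetilde{\mathbb A},\bar r_0)=\Gamma(\Delta(\widetilde{\mathbb A},\psi),\varphi)=\Gamma(\widetilde{\mathbb B},\varphi)$; so $\xi_{\mathbb A}$ yields $\Gamma(\widetilde{\mathbb B},\varphi)\cong\widetilde{\mathbb A}$.

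\textbf{Part 3) and the main obstacle.} The implication $\mathbb B_1\cong\mathbb B_2\Rightarrow\Gamma(\mathbb B_1,\varphi)\cong\Gamma(\mathbb B_2,\varphi)$ is routine: an isomorphism $\mathbb B_1\to\mathbb B_2$ carries $\varphi(\mathbb B_1)$ onto $\varphi(\mathbb B_2)$ and induces one of the interpreted structures (regularity matches admissible parameters). Conversely, if $\Gamma(\mathbb B_1,\varphi)\cong\Gamma(\mathbb B_2,\varphi)$, then applying $\Delta$ and Lemma~\ref{le:int-transitivity} makes the structures $\Delta\circ\Gamma(\mathbb B_i,\bar t_0)=\Delta(\Gamma(\mathbb B_i,\varphi),\psi)$, $i=1,2$, isomorphic to one another; the $L(\mathbb B)$-sentence $\xi_{\mathbb B}$ asserting, exactly as $\xi_{\mathbb A}$ but for $\theta_{\mathbb B}$ and $\Delta\circ\Gamma$ (Definition~\ref{def:reg}, condition~3)), that $\theta_{\mathbb B}$ defines a coordinate map of $\Delta\circ\Gamma$, holds in $\mathbb B$, hence in $\mathbb B_1$ and $\mathbb B_2$, so $\mathbb B_i\cong\Delta\circ\Gamma(\mathbb B_i,\bar t_0)$ and therefore $\mathbb B_1\cong\mathbb B_2$. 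The step I expect to demand the most care is not any single deduction but the bookkeeping behind $\Phi$, $\xi_{\mathbb A}$, $\xi_{\mathbb B}$: verifying that ``$\theta$ defines a coordinate map'' really is one first-order sentence \emph{uniform in the regular parameter}; that $\Gamma(\widetilde{\mathbb B},\varphi)$ is well defined up to isomorphism and not only up to elementary equivalence --- as Part 3) requires --- which again reduces to a first-order assertion true in $\mathbb B$, namely that the interpretations attached to distinct admissible parameters are uniformly definably isomorphic; and that the composite parameter tuples furnished by Lemma~\ref{le:int-transitivity}(1)--(2) lie in the domains $\varphi_\Delta\wedge\psi$ and $\psi_\Gamma\wedge\varphi$ required in Definition~\ref{def:reg}(2)--(3). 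This is precisely where regularity --- the $0$-definability of the parameter sets --- is essential, as opposed to bi-interpretability with arbitrary parameters: it makes these domain conditions first-order, hence preserved under $\equiv$.
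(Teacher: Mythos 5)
The paper does not prove Theorem~\ref{th:main}; it cites it from \cite{DM1,MN} without argument, so there is no internal proof to compare your attempt against. That said, your proposal is the standard argument for results of this kind and it is essentially correct: the key devices --- the first-order scheme $\Phi$ expressing that $(\Gamma,\varphi)$ is a well-defined regular interpretation, the single sentences $\xi_{\mathbb A}$, $\xi_{\mathbb B}$ asserting that $\theta_{\mathbb A}$, $\theta_{\mathbb B}$ define coordinate maps at \emph{every} admissible parameter, the $\Gamma$-translation of sentences, and Lemma~\ref{le:int-transitivity} to identify $\Gamma\circ\Delta(\widetilde{\mathbb A},\bar r_0)$ with $\Gamma(\Delta(\widetilde{\mathbb A},\psi),\varphi)$ --- are exactly what is needed, and your closing observation that the real work is bookkeeping and that regularity (i.e.\ $0$-definability of the parameter set) is what makes the conditions first-order and hence preserved under $\equiv$ is on target.

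One point you flag but should actually close: the well-definedness of $\Gamma(\widetilde{\mathbb B},\varphi)$ up to \emph{isomorphism}, not merely up to elementary equivalence. Your $\Phi$ in Part 1) only yields $\Gamma(\widetilde{\mathbb B},\bar p)\equiv\Gamma(\widetilde{\mathbb B},\bar p')$, yet the theorem treats $\Gamma(\widetilde{\mathbb B},\varphi)$ as a single structure, and Part~3) needs isomorphism. The machinery you already introduce closes the gap: given admissible $\bar p,\bar p'\in\varphi(\widetilde{\mathbb B})$, put $\widetilde{\mathbb A}:=\Gamma(\widetilde{\mathbb B},\bar p)$. Since $\xi_{\mathbb B}$ holds in $\widetilde{\mathbb B}$, the formula $\theta_{\mathbb B}$ gives an isomorphism $\Delta\circ\Gamma(\widetilde{\mathbb B},\bar t_0)\cong\widetilde{\mathbb B}$, i.e.\ $\Delta(\widetilde{\mathbb A},\psi)\cong\widetilde{\mathbb B}$; and since $\xi_{\mathbb A}$ holds in $\widetilde{\mathbb A}$, one gets $\Gamma(\Delta(\widetilde{\mathbb A},\psi),\bar p'')\cong\widetilde{\mathbb A}$ for \emph{every} admissible $\bar p''$. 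Transporting along $\Delta(\widetilde{\mathbb A},\psi)\cong\widetilde{\mathbb B}$ then yields $\Gamma(\widetilde{\mathbb B},\bar p')\cong\widetilde{\mathbb A}=\Gamma(\widetilde{\mathbb B},\bar p)$. This is worth spelling out, because Parts~1)--3) as written all quietly rely on it. A second, minor, point: in Part~2) the step $\Gamma\circ\Delta(\widetilde{\mathbb A},\bar r_0)=\Gamma(\Delta(\widetilde{\mathbb A},\psi),\varphi)$ is an isomorphism via Lemma~\ref{le:int-transitivity}(2), not an equality, since the $\Gamma$-parameter inside $\bar r_0$ is a $\Delta$-preimage $\bar{\bar p}\in\mu_\Delta^{-1}(\bar p)$, not $\bar p$ itself; harmless, but should be stated.
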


In fact, one does not need bi-interpretability of $\mathbb A$ and $\mathbb B$ in Theorem \ref{th:main} it suffices to have a weaker condition, that $\mathbb A$ is regularly invertibly interpretable in  $\mathbb B$ (see \cite{DM1,MN}). However, for other applications, bi-interpretability is required.

Elimination of imaginaries plays an important part in model theory (see \cite{Hodges} for details). This involves imaginaries \emph{without parameters}.

\begin{definition}
We say that an algebraic structure $\MB$ has {\em uniform elimination of imaginaries with parameters}, if the algebraic structure $\MB_B$, obtained from $\MB$ by adding all elements of $\MB$ to the language as constants, has uniform elimination of imaginaries without parameters.
\end{definition}

\begin{theorem} \cite{DM1}
Let $\MA$ and $\MB$ be strongly bi-interpretable with parameters in each other, so $\MA \simeq \Gamma(\MB,\bar p)$ and $\MB\simeq\Delta(\MA,\bar q)$, and $\Delta$ is injective. If $\MB$ has uniform elimination of imaginaries with parameters, then $\MA$ has uniform elimination of imaginaries with parameters. 
\end{theorem}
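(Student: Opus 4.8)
The argument has three stages: a reduction to the absolute (parameter-free) case by naming elements; the construction of a $0$-definable copy of $\MB$ inside $\MA$ carrying a $0$-definable ``decoding'' map; and the transfer of a given definable equivalence relation through $\MB$. For the reduction, pass to $\MA_A$ and $\MB_B$. In $\MB_B$ the parameter tuple $\bar p \in B^k$ consists of constants, so $\Gamma$ becomes an absolute code interpreting $\MA$ in $\MB_B$; moreover each new constant $a$ of $\MA_A$ is interpreted by the $\sim_\Gamma$-class of any fixed tuple in $\mu_\Gamma^{-1}(a) \subseteq B^n$, and this tuple is $0$-definable in $\MB_B$ (it is a tuple of constants). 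Symmetrically $\Delta$ becomes an absolute code interpreting $\MB$ in $\MA_A$, still injective, and each new constant $b$ of $\MB_B$ is interpreted by the \emph{unique} preimage $\mu_\Delta^{-1}(b) \in B_\Delta \subseteq A^d$ (unique because $\Delta$ is injective), which is $0$-definable in $\MA_A$. Since adding constants only enlarges the class of definable sets, the definability of $\mu_\Gamma\circ\mu_\Delta$ and $\mu_\Delta\circ\mu_\Gamma$ required by Definition~\ref{def:bi} persists, so $\MA_A$ and $\MB_B$ are strongly $0$-bi-interpretable with the $\MB_B$-into-$\MA_A$ code injective. As ``uniform EI with parameters for $\MA$'' means ``uniform EI without parameters for $\MA_A$'', and $\MB_B$ has the latter by hypothesis, it suffices to prove the absolute statement: if $\MA, \MB$ are strongly $0$-bi-interpretable with the $\MB$-into-$\MA$ code $\Delta$ injective and $\MB$ has uniform EI without parameters, then so does $\MA$. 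From now on we work with such $\MA, \MB$.

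\emph{A $0$-definable copy of $\MB$ inside $\MA$.} Since $\Delta$ is injective, $\mu_\Delta$ is a bijection of the $0$-definable set $B_\Delta \subseteq A^d$ onto $B$; hence via the $\Delta$-translation every $0$-definable relation of $\MB$ becomes a $0$-definable relation of $\MA$ concentrated on tuples from $B_\Delta$, so $\MB$ ``lives'' $0$-definably inside $\MA$ with no quotient involved. By Lemma~\ref{le:int-transitivity} the composite code $\Gamma\circ\Delta$ interprets $\MA$ in $\MA$ on the $0$-definable set $A_{\Gamma\circ\Delta} = U_{\Gamma\circ\Delta}(\MA) \subseteq (B_\Delta)^n \subseteq A^{dn}$, with coordinate map $\nu := \mu_\Gamma\circ\mu_\Delta \colon A_{\Gamma\circ\Delta} \to A$; and the hypothesis that the bi-interpretation is \emph{strong} (Definition~\ref{def:bi}, condition 2) says exactly that $\nu$ is $0$-definable in $\MA$. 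This is the only place where the strong, rather than the weak, version of bi-interpretation enters.

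\emph{Transfer of the equivalence relation.} Let $E(\bar x, \bar y)$ be a $0$-definable equivalence relation on $A^m$. Its $\Gamma$-translation $E_\Gamma$ restricts to a $0$-definable equivalence relation on $(A_\Gamma)^m \subseteq B^{nm}$ (reflexivity, symmetry, transitivity all descend from $E$ via the translation identity), which we extend by a single extra class on the complement to a $0$-definable equivalence relation $E'$ on $B^{nm}$. By uniform EI for $\MB$ there is a $0$-definable $g \colon B^{nm} \to B^s$ with $g(\bar\beta) = g(\bar\beta') \iff E'(\bar\beta,\bar\beta')$, the formula for $g$ being obtained from that for $E$ by a fixed syntactic recipe. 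Internalizing $g$ through $\Delta$ yields a function $g^\sharp \colon (B_\Delta)^{nm} \to (B_\Delta)^s \subseteq A^{ds}$ that is $0$-definable in $\MA$; here injectivity of $\Delta$ is essential, as it makes the values of $g^\sharp$ honest tuples of $A$ and not $\MA$-imaginaries. Now define $f \colon A^m \to A^{ds}$ by declaring $f(\bar a) = \bar c$ iff there is $\bar\xi \in (A_{\Gamma\circ\Delta})^m$ with $\nu(\bar\xi_i) = a_i$ for all $i$ and $g^\sharp(\bar\xi) = \bar c$. This is total since $\nu$ maps onto $A$; it is single-valued because if $\bar\xi, \bar\xi'$ both decode $\bar a$ then their $\Delta$-decodings $\bar\beta, \bar\beta' \in (A_\Gamma)^m$ satisfy $\mu_\Gamma(\bar\beta_i) = \mu_\Gamma(\bar\beta'_i) = a_i$, so $E_\Gamma(\bar\beta, \bar\beta')$ holds (being equivalent to the true statement $E(\bar a, \bar a)$), whence $g(\bar\beta) = g(\bar\beta')$ and $g^\sharp(\bar\xi) = g^\sharp(\bar\xi')$; and it is $0$-definable in $\MA$, being assembled from the $0$-definable objects $\nu$, $A_{\Gamma\circ\Delta}$ and $g^\sharp$. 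Finally, chasing the translation identities, for preimages $\bar\beta, \bar\beta'$ of $\bar a, \bar a'$ one gets $f(\bar a) = f(\bar a') \iff g(\bar\beta) = g(\bar\beta') \iff E'(\bar\beta, \bar\beta') \iff E_\Gamma(\bar\beta, \bar\beta') \iff E(\bar a, \bar a')$. Since every step was a fixed operation on the formula for $E$, the function $f$ is produced uniformly, so $\MA$ has uniform EI without parameters, i.e., uniform EI with parameters in the original notation.

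\emph{Main difficulty.} The heart of the matter is the middle stage: extracting, from the mere definability of $\mu_\Gamma\circ\mu_\Delta$ guaranteed by strong bi-interpretation, a $0$-definable decoding map $\nu$ that makes the above construction run; and recognizing that injectivity of $\Delta$ is precisely what stops the canonical parameter manufactured in $\MB$ from returning to $\MA$ as an imaginary. The remaining work --- careful bookkeeping of the $\Gamma$- and $\Delta$-translations, the extension of $E_\Gamma$ to an equivalence relation on all of $B^{nm}$, and the well-definedness of $f$ --- is routine but must be carried out with care, and the exact form of ``uniform'' EI one uses should be fixed at the outset so that the syntactic uniformity of the construction matches it.
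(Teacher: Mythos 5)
The paper does not prove this theorem; it is quoted from \cite{DM1} as a known result, so there is no in-text argument to compare against. Your proposed proof is sound and follows the route one would expect: reduce to the absolute case by naming all elements (with the observation that the new constants of $\MA_A$ are coded in $\MB_B$ by tuples of constants and, because $\Delta$ is injective, the new constants of $\MB_B$ by honest definable tuples of $\MA_A$), exploit strong bi-interpretability to obtain a $0$-definable decoding map $\nu=\mu_\Gamma\circ\mu_\Delta$, push a $0$-definable equivalence relation $E$ through $\Gamma$, extend it trivially off $A_\Gamma^m$, eliminate the imaginary in $\MB$, and pull the resulting function back through the injective $\Delta$ into genuine tuples of $\MA$. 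You correctly flag the two hinges: strength of the bi-interpretation is used exactly to definability of $\nu$, and injectivity of $\Delta$ is used exactly to make the codomain of $g^\sharp$ a set of real tuples rather than $\MA$-imaginaries.

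One caveat worth tightening in a polished write-up: the reduction via $\MA_A$ and $\MB_B$ introduces infinitely many constants (one per element), while an interpretation code $\Gamma$ as formalized in the paper (and in Definition~\ref{de:interpretable}) is required to interpret each constant symbol of $L(\MA)$ by a single formula over $L(\MB)$ — so literally composing the interpretation with the expansion would yield an infinite code, not a code in the paper's sense. This is easily patched (work directly with parameters rather than constants, or note that $\Gamma$ already handles parameters and the constant-interpreting formulas are just equalities to fixed tuples), but as phrased the ``absolute'' reduction is slightly heavier machinery than what is actually needed, and the uniformity bookkeeping — that each step is a syntactic operation on the formula for $E$ that does not depend on the particular choice of parameters — should be done directly rather than invoked at the end.
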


Note, that $\Z$ has uniform elimination of imaginaries (with or without parameters). Hence, all structures strongly injectively bi-interpretable with $\Z$ (with parameters) enjoy uniform elimination of imaginaries with parameters.

 \section{By-interpretation of a free metabelian group with $\mathbb Z$}
 \label{se:free-metabelian}
It was announced in \cite{Khelif} and  proved in \cite{KMS} that a free metabelian group $G$ of finite rank $n \geq 2$ is prime, atomic, homogeneous, and QFA. Furthermore, it was shown in \cite{KMS} that $G$ is rich which implies a host of model-theoretic results for $G$. This was done by proving the existence of bi-interpretation of $G$ with $\Z$.  Here we improve on our   bi-interpretation from \cite{KMS} and provide  new applications of this  in the next section.
  
 Throughout this section we denote by $G$ a free metabelain group
of rank $n\geq 2$.

\subsection{Preliminaries for metabelian groups}\label{se:3.1}

In this section we introduce notation and describe  some   results  that we need in the sequel.  

Let $G' = [G,G]$ be the commutant of $G$ and $G_m$  the $m$'th term of the lower central series of $G$. For a subset $A \subseteq G$ denote by   $\langle A \rangle$ the subgroup generated by $A \subseteq G$ and by $C_G(A)$ -  the centralizer of $A$ in $G$. If  $g, h \in G$  then  $[g,h] = g^{-1}h^{-1}gh$ is the commutator of $g$ and $h$, and $g^h = h^{-1}gh$ is the conjugate of $g$ by $h$.  The maximal root of an element $g \in G$ is an element $g_0 \in G$ such that $g_0$ is not a proper power in $G$ and $g \in \langle g_0\rangle$. Note that the maximal roots of the elements in $G$ exist and they are unique.

We will be using the following standard commutator identities that hold in every group for any elements $a,b,c$.

  \begin{equation} \label{eq:comm-identities-1}
      [a,b]^{-1} = [b,a], \ [a^{-1},b] = [b,a]^{a^{-1}},
      \end{equation}
\begin{equation} \label{eq:comm-identities-2}
     [ab,c] = [a,c]^b[b,c], \ [a,bc] = [a,c][a,b]^c. 
   \end{equation}

 In \cite{Mal1} Mal'cev obtained a description of centralizers of elements in free solvable groups, in particular, in free metabelian groups $G$ centralizers are as follows. 
 \begin{lemma} \cite{Mal1}
Let $g \in G, g \neq 1$. Then 
\begin{itemize}
\item [1)] if $g \in G'$ then $C_G(g) = G'$
\item [2)] if $g \not \in G'$ then $C_G(g) = \langle g_0\rangle$, where $g_0$ is the unique maximal root of $g$. 
\end{itemize}
 \end{lemma}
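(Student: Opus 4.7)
The plan is to reduce both parts to linear-algebra facts about the Magnus embedding. Write $R = \Z[G/G']$ with $G/G' \cong \Z^n$ generated by $\bar x_1, \ldots, \bar x_n$. The Magnus embedding sends $g \in G$ injectively to the pair $(\bar g, d(g))$, where $d(g) = (d_1(g), \ldots, d_n(g)) \in R^n$ is the tuple of Fox derivatives, with multiplication $(\bar g, d(g))\cdot(\bar h, d(h)) = (\bar g\bar h,\, \bar g\cdot d(h) + d(g))$. I will use two consequences: the Fox identity $\sum_i (\bar x_i - 1)\, d_i(g) = \bar g - 1$ in $R$ for every $g \in G$, and the commutativity criterion $[g,h] = 1 \Leftrightarrow (\bar g - 1)\, d(h) = (\bar h - 1)\, d(g)$ in $R^n$, which follows by comparing $gh$ with $hg$.

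For part (1), $G' \subseteq C_G(g)$ is automatic since $G'$ is abelian. Conversely, if $h$ centralizes $g \in G' \setminus \{1\}$, then $\bar g = 1$ and the criterion gives $(\bar h - 1)\, d(g) = 0$. Injectivity of the Magnus embedding forces $d(g) \neq 0$, so some $d_j(g) \in R$ is nonzero; because $R$ is a Laurent polynomial ring over $\Z$, hence an integral domain, $(\bar h - 1)\, d_j(g) = 0$ forces $\bar h = 1$, i.e.\ $h \in G'$.

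For part (2), the containment $\langle g_0\rangle \subseteq C_G(g)$ is clear. Conversely, take $h \in C_G(g)$; if $h \in G' \setminus \{1\}$, then part (1) applied to $h$ gives $g \in C_G(h) = G'$, contradicting $g \notin G'$, so $h = 1$ or $h \notin G'$. In the latter case I claim $\bar g$ and $\bar h$ are $\Z$-linearly dependent in $\Z^n$. If not, then $\bar g$ has infinite order in $\Z^n$ and the image of $\bar h$ in $\Z^n/\langle \bar g\rangle$ also has infinite order, so (using that $a - 1$ is a non-zero-divisor in $\Z[A]$ precisely when $a$ has infinite order in the abelian group $A$) the pair $(\bar g - 1,\, \bar h - 1)$ is a regular sequence in $R$. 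Applying the Koszul syzygy relation componentwise to $(\bar g - 1)\, d_i(h) = (\bar h - 1)\, d_i(g)$ produces $f_i \in R$ with $d_i(g) = (\bar g - 1) f_i$; substituting into the Fox identity and cancelling $\bar g - 1$ in the domain $R$ yields $\sum_i (\bar x_i - 1) f_i = 1$, which contradicts the fact that the left-hand side lies in the augmentation ideal of $R$. Hence $\bar g, \bar h$ generate a rank-one subgroup of $\Z^n$.

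To finish, $\langle g, h\rangle$ is abelian and meets $G'$ trivially (a nontrivial $c \in \langle g, h\rangle \cap G'$ would force $g \in C_G(c) = G'$ by part (1)), so the projection to $G/G'$ is injective on $\langle g, h\rangle$ and identifies it with the cyclic subgroup $\langle \bar g, \bar h\rangle$ of $\Z^n$. Thus $\langle g, h\rangle = \langle a\rangle$ for some $a \in G$, and in particular $g$ is a nontrivial power of $a$. Letting $a_0$ be the maximal root of $a$, $g$ is then a nontrivial power of the non-proper-power $a_0$, and uniqueness of maximal roots gives $\langle a_0\rangle = \langle g_0\rangle$; therefore $h \in \langle a\rangle \subseteq \langle g_0\rangle$. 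The main obstacle is the linear-dependence step in part (2), which rests on combining the regular-sequence/Koszul argument with the Fox identity; everything else is bookkeeping around the Magnus embedding.
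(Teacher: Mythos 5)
The paper gives no proof of this lemma; it simply cites Mal'cev~\cite{Mal1}, so your argument supplies a complete proof where the text has none, and it is correct. You translate both parts into the Magnus embedding: commuting with $g$ becomes the relation $(\bar g-1)\,d(h)=(\bar h-1)\,d(g)$ in $R^n$ for $R=\Z[G/G']$, which together with injectivity of the embedding and $R$ being a domain gives part (1) immediately. For part (2) the essential step is that $\bar g$ and $\bar h$ must be $\Z$-linearly dependent in $G/G'\cong\Z^n$, and your regular-sequence/Koszul argument, fed into the Fox identity $\sum_i(\bar x_i-1)d_i(g)=\bar g-1$ and contradicted through the augmentation ideal, achieves this cleanly. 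One point should be spelled out rather than asserted: you need $\bar h-1$ to remain a nonzerodivisor in $R/(\bar g-1)\cong\Z[\Z^n/\langle\bar g\rangle]$, and this quotient group may have torsion, so it is not the familiar fact that group rings of ordered (or torsion-free) abelian groups are domains. It is nonetheless true --- if $a$ has infinite order in an abelian group $A$ and $m$ kills the torsion component of $a$, then $a^m-1$ lies in a torsion-free group subring of $\Z[A]$ and is nonzero, so $a-1$ cannot be a zerodivisor --- but a line of justification is warranted. The closing passage (for each $h\in C_G(g)$, the abelian group $\langle g,h\rangle$ meets $G'$ trivially by part (1), hence projects injectively onto a rank-one, therefore cyclic, subgroup of $\Z^n$, forcing $h\in\langle g_0\rangle$) is also correct. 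Mal'cev's cited argument covers free solvable groups of every derived length; your Magnus/Fox proof is specific to the metabelian case but fits naturally with the Fox-derivative machinery the paper develops in Section~\ref{se:3.1}.
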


\medskip \noindent
 Let $v \in G\smallsetminus G'$. Define a map $\lambda_v:G' \to G'$ such that  for $c \in G'$ 
$ \lambda_v(c) = [v,c]$.  Then the map $\lambda_v$ is a homomorphism.  Indeed, using the second commutator identity in (\ref{eq:comm-identities-2})
 one has for $c_1,c_2 \in G'$ 
\begin{equation}\label{eq:1.2}
[v,c_1c_2] = [v,c_2][v,c_1]^{c_2} = [v,c_2][v,c_1] =  [v,c_1][v,c_2],
\end{equation}
as claimed. Similarly, in the notation above,  the map $\mu_v: c \to [c,v]$ is a homomorphism $\mu_v:G' \to G'$.

\medskip \noindent
 Let $v \in G\smallsetminus G'$ and $d \in G'$. Then for any $k \in \Z$ there exists $c \in G'$ such that:
$$
(vd)^k = v^kd^k[c,v].
$$
We prove first, by induction on $k$,  that 
$$
d^kv = vd^k[c,v]
$$ 
for  some $c \in G'$. Indeed, for $k = 1$ one has the standard equality $dv = vd[d,v]$. Now 
$$
d^{k+1}v = d^kdv = d^kvd[d,v] = vd^k[c_1,v]d[d,v] = vd^{k+1}[c_1,v][d,v]=vd^{k+1}[c_1d,v],
$$
the last equality comes from the property~\eqref{eq:1.2}, that the map $\mu_v$ is a homomorphism on $G'$.

Now one can finish the claim by induction on $k$ as follows (here elements $c_i \in G'$ appear as the result of application of the induction step and the claim above):
$$
(vd)^{k+1} = (vd)^kvd  = v^kd^k[c_2,v]vd = v^kd^kv[c_2,v][[c_2,v],v]d = 
$$
$$
= v^kvd^k[c_3,v][c_2,v][[c_2,v],v]d = v^{k+1}d^{k+1}[c_3c_2[c_2,v],v] =  v^{k+1}d^{k+1}[c,v]
$$
where the second to last equality comes again from the property~\eqref{eq:1.2}, and $c = c_3c_2[c_2,v]$. This proves the claim.

\medskip \noindent

\begin{lemma} \label{le:identity} \cite[Lemma 4.23]{KMS}
Let $d \in G'$. If for any $v \in G\smallsetminus G'$ there exists $c \in G'$ such that 
$d = [c,v]$, then $d = 1$.
\end{lemma}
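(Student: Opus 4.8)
The plan is to work in the free metabelian group $G$ of rank $n\geq 2$ and exploit the fact that $G'$ is a faithful module over $\Z[G/G']$, together with the explicit action of the maps $\mu_v$. Recall that $G/G'\cong\Z^n$ with free generators $t_1,\dots,t_n$ (images of a basis $x_1,\dots,x_n$ of $G$), and $G'$, written additively, is a module over the group ring $\Lambda=\Z[t_1^{\pm1},\dots,t_n^{\pm1}]$, where conjugation by $x_i$ acts as multiplication by $t_i$; this is the Magnus embedding / Magnus representation for free metabelian groups. Under this identification, for $v\in G\smallsetminus G'$ with image $\bar v=(a_1,\dots,a_n)\in\Z^n$ and $c\in G'$ corresponding to $\xi\in G'$, the commutator $[c,v]=c^{-1}c^{v}$ corresponds to $(t^{\bar v}-1)\cdot\xi$, where $t^{\bar v}=t_1^{a_1}\cdots t_n^{a_n}$. (One should double-check the sign/side conventions using the identities~\eqref{eq:comm-identities-1}, \eqref{eq:comm-identities-2}, but it does not matter for the argument.) So the hypothesis says: writing $d\in G'$ as an element $\delta\in G'\subseteq\Lambda^{?}$ of the Magnus module, for every nonzero $\bar v\in\Z^n$ there is $\xi_{\bar v}\in G'$ with $\delta=(t^{\bar v}-1)\xi_{\bar v}$, i.e. $\delta$ is divisible by $t^{\bar v}-1$ in the module $G'$.

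First I would set up the Magnus representation precisely: $G'$ embeds as a $\Lambda$-submodule of a free $\Lambda$-module (concretely the relation module / the submodule of $\Lambda^n$ cut out by the Jacobian/augmentation condition $\sum(t_i-1)y_i=0$), and this embedding is injective, so it suffices to prove $\delta=0$ there. Then the key step is a divisibility/coprimality argument in the ring $\Lambda$, which is a Laurent polynomial ring over $\Z$, hence a UFD. The elements $t^{\bar v}-1$ for varying primitive $\bar v\in\Z^n$ are pairwise non-associate and, more to the point, one can choose two of them, say $f=t_1-1$ and $g=t_1 t_2-1$, that are coprime in $\Lambda$ (neither divides the other, and $\Lambda$ being a UFD they share no common irreducible factor — this is a short gcd computation). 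If $\delta$ is divisible by both $f$ and $g$ in the module $G'$, I would like to conclude $\delta$ is divisible by $fg$, and then iterating over infinitely many pairwise-coprime $t^{\bar v}-1$ forces $\delta=0$ since a nonzero element of a Noetherian torsion-free module cannot be infinitely divisible.

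The main obstacle is exactly this last inference: divisibility statements behave well in the UFD $\Lambda$ itself, but $G'$ is only a submodule of a free module, not free, so "$f\mid\delta$ and $g\mid\delta$ in $G'$" must be upgraded to a statement about $\delta$ inside the ambient free module $\Lambda^n$ (or $\Lambda$) where unique factorization applies, and then the quotient $\delta/(fg)$ must be checked to again lie in $G'$ (this uses that $G'$ is the kernel of the augmentation-type map, which is preserved under dividing by a unit-augmentation element like $t^{\bar v}-1$ only after care). I would handle this by noting $G'$ is a torsion-free $\Lambda$-module and $\Lambda$ is integrally closed, so divisibility of $\delta$ by the coprime elements $f,g$ in the fraction field, combined with $\delta\in G'$, gives $\delta\in fg\cdot(\text{fraction field})\cap G'$; a direct check that $(t^{\bar v}-1)^{-1}(t^{\bar w}-1)^{-1}\delta$ still satisfies the Magnus constraint (because the constraint is $\Lambda$-linear and these are non-zero-divisors) closes it. The fallback, if the module bookkeeping is unpleasant, is to avoid the Magnus embedding entirely and argue directly: $G'$ modulo $G''$... but $G''=1$ here, so in fact the cleanest route is the module argument above, and the real content is the coprimality of two well-chosen $t^{\bar v}-1$ and infinite descent. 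I expect roughly: (1) Magnus set-up and translation of the hypothesis into module divisibility — routine but must be stated; (2) choose countably many primitive vectors $\bar v^{(1)},\bar v^{(2)},\dots$ with $\{t^{\bar v^{(k)}}-1\}$ pairwise coprime in $\Lambda$; (3) deduce $\prod_{k\le N}(t^{\bar v^{(k)}}-1)\mid\delta$ for all $N$ inside the free module; (4) conclude $\delta=0$ by torsion-freeness and Noetherianity (degrees grow without bound), hence $d=1$.
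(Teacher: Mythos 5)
The paper does not reproduce a proof here; it cites \cite[Lemma 4.23]{KMS}, so there is no internal argument to compare against line by line. That said, your Magnus-embedding / Fox-derivative translation is the standard route and your outline is correct in essence. The worry you devote your middle paragraph to, however, is self-inflicted and can be eliminated.

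The translation should be used in one direction only. Fox derivatives give a $\Lambda$-linear injection $c \mapsto (d_1(c),\dots,d_n(c))$ from $G'$ into the free module $\Lambda^n$, where $\Lambda = \Z[t_1^{\pm 1},\dots,t_n^{\pm 1}]$. Because this map is $\Lambda$-linear, the hypothesis $d=[c_{\bar v},v]$, i.e.\ $d=(\bar v-1)\cdot c_{\bar v}$ in the module $G'$, immediately yields $d_i(d)=(\bar v^{-1}-1)\,d_i(c_{\bar v})$ in $\Lambda$ for each $i$; so each coordinate $d_i(d)$ is divisible by $\bar v-1$ in $\Lambda$, period. You never pass back from $\Lambda^n$ to $G'$, so the question of whether $\delta/(fg)$ lies in the submodule cut out by the augmentation condition simply does not arise, and neither torsion-freeness of $G'$ nor integral closedness of $\Lambda$ is needed. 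Moreover you do not need two coprime divisors at all: taking the single family $v=x_1^k$, $k=1,2,3,\dots$, gives that the fixed Laurent polynomial $d_i(d)$ is divisible by $t_1^k-1$ for every $k$. A nonzero Laurent polynomial cannot have this property (clear the monomial denominator and bound $\deg_{t_1}$, or note the infinitely many pairwise non-associate irreducible factors $\Phi_k(t_1)$). Hence $d_i(d)=0$ for all $i$, and injectivity of the Fox-derivative map on $G'$ gives $d=1$.

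In short, steps (1) through (4) of your outline are sound, and the coprime pair $t_1-1$, $t_1t_2-1$ you exhibit is indeed coprime in $\Lambda$, but it is an unnecessary refinement: once divisibility is read off coordinatewise in the ambient free module, one cofinal family of unbounded degree already finishes the argument, and the module bookkeeping you flagged as the ``main obstacle'' disappears.
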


\subsection{Normal forms of elements}
 \label{subsec:2.5}   
  
 Fix a finite set of generators $X = \{x_1, \ldots,x_n\}$.   
 
 We denote by $\bar{} :G \to G/G'$ the canonical epimorphism $g \to \bar g = gG'$ from the group $G$ onto its abelianization $\bar G = G/G'$. Put  $a_1 = \bar x_1, \ldots, a_n = \bar x_n$.
 The group $G$ acts by conjugation on $G'$, which gives an action of the abelianization $\bar G$ on $G'$.  This action extends linearly to an action of the group ring $\Z\bar G$ on $G'$ and turns $G'$ into a $\Z\bar G$-module. Since  $a_1, \ldots,a_n$ generate  $\bar G$, then then the identity map $a_i \to a_i, i = 1, \ldots,n,$ extends to  an epimorphism $\eta:A \to \Z\bar G$ from the  ring of Laurent polynomials  $A = \Z[a_1,a_1^{-1}, \ldots,a_n,a_n^{-1}]$ onto $\Z\bar G$, which provides an action of $A$ onto $G'$ and turns $G'$ into $A$-module. If $a_1, \ldots,a_n$ forms a basis of   $G'$, then $\eta$ is an isomorphism and we can identify $\Z\bar G$ with $A$. For the action of $A$ on $G'$ we use exponential notation, that is, for $u\in G'$ and $a \in A$ we denote by $u^a$ the result of the action of $a$ on $u$.  We use this notation throughout the paper.   
 
 Using the standard commutator identities, which hold in every group,
one can write every  element $u \in G'$ in the form 
\begin{equation} \label{eq:uncollected}
    u = \Pi_{1\leq j < i \leq n}[x_i,x_j]^{Q_{i,j}},
\end{equation}
 where $Q_{i,j}$ are Laurent polynomials from $A$. Hence the set of commutators
$$C_X = \{ [x_i,x_j] \mid 1\leq j < i \leq n\}$$
generates $G'$ as an $A$-module. Note, that  $G$, as well as any metabelian group, satisfies the Jacobi identity, i.e, for every $u, v, w \in G$ 
\begin{equation} \label{eq:Jacobi}
  [u,v,w][v,w,u][w,u,v] = 1.  
\end{equation}
In particular,  for $u = x_i, v = x_j, w = x_k$ one gets (in the module notation)
$$
[x_i,x_j]^{a_k-1}[x_j,x_k]^{a_i-1}[x_k,x_i]^{a_j-1} = 1,
$$
hence 
\begin{equation} \label{eq:xi-xj-xk}
    [x_i,x_j]^{a_k-1} = [x_k,x_j]^{a_i-1}[x_k,x_i]^{1-a_j},
\end{equation}
so $C_X$ is not a free generating set of the module $G'$. However, there are nice normal forms of elements of the module $G'$ (see \cite{MRom}, \cite{KMS}). To deal with normal forms in $G$ we need a few preliminary results.

 \begin{remark}
     Let $1 \neq \bar g \in \bar G$ and $\delta \in \Z$. Then $\bar g -1$ divides ${\bar g}^\delta -1 $ in the ring $\Z(\bar G)$, i.e., the element $\frac{{\bar g}^\delta -1}{\bar g -1}$ is uniquely defined in $\Z(\bar G)$. 
    Indeed, if $\delta > 0$ then 
     $$
     {\bar g}^\delta -1 = (\bar g -1)({\bar g}^{\delta-1} + \ldots + \bar g +1 ).
     $$
     If $\delta <0$ then ${\bar g}^\delta = ({\bar g}^{-1})^{|\delta|}$ and the formula above applies.  
 \end{remark}

The following generalizes the equality (\ref{eq:xi-xj-xk}).
\begin{lemma} \label{le:general-Jacobi}
Let $1\leq j < i < k \leq n$ and $\delta \in \Z$.
Then
\begin{equation} \label{eq:general-Jacoby}
    [x_i,x_j]^{a_k^{\delta} -1} = [x_k,x_j]^{(a_i-1)\frac{a_k^\delta -1}{a_k-1}}[x_k,x_i]^{(1-a_j)\frac{a_k^\delta-1}{a_k-1}}
\end{equation}
\end{lemma}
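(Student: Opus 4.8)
The plan is to prove (\ref{eq:general-Jacoby}) by reducing it to the already-established identity (\ref{eq:xi-xj-xk}) together with an induction on $\delta$, exploiting the fact that the relevant maps are module endomorphisms. First, I would rewrite (\ref{eq:xi-xj-xk}) as the statement that, for the fixed indices $j<i<k$,
\[
[x_i,x_j]^{a_k-1} = [x_k,x_j]^{a_i-1}[x_k,x_i]^{1-a_j},
\]
and observe that both sides live in $G'$, which is an $A$-module. The key step is to apply the operator $\frac{a_k^\delta-1}{a_k-1}\in A$ (well-defined by the Remark preceding the lemma, after pushing down to $\Z\bar G$) to both sides. Since the $A$-action is by module endomorphisms, the left-hand side becomes
\[
\bigl([x_i,x_j]^{a_k-1}\bigr)^{\frac{a_k^\delta-1}{a_k-1}} = [x_i,x_j]^{(a_k-1)\cdot\frac{a_k^\delta-1}{a_k-1}} = [x_i,x_j]^{a_k^\delta-1},
\]
using that $A$ (or its image $\Z\bar G$) is commutative and that $(a_k-1)\cdot\frac{a_k^\delta-1}{a_k-1}=a_k^\delta-1$ in $\Z\bar G$. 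Applying the same operator to the right-hand side distributes over the product and multiplies each exponent by $\frac{a_k^\delta-1}{a_k-1}$, yielding exactly
\[
[x_k,x_j]^{(a_i-1)\frac{a_k^\delta-1}{a_k-1}}[x_k,x_i]^{(1-a_j)\frac{a_k^\delta-1}{a_k-1}},
\]
which is the claimed formula.

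The one point requiring a little care is the passage between the Laurent polynomial ring $A$ and the group ring $\Z\bar G$: the quotient $\frac{a_k^\delta-1}{a_k-1}$ is genuinely an element of $A$ only when $\delta\ge 1$ (it is the polynomial $a_k^{\delta-1}+\cdots+a_k+1$), while for $\delta\le 0$ one must interpret it in $\Z\bar G$ via $a_k^\delta=(a_k^{-1})^{|\delta|}$ and the identity $\frac{g^\delta-1}{g-1}$ established in the Remark. Since the module action of $A$ on $G'$ factors through $\eta:A\to\Z\bar G$, it is cleanest to carry out the whole computation with exponents in $\Z\bar G$ rather than $A$; then $a_k-1$ is invertible-up-to-the-numerator in the precise sense of the Remark, and all the manipulations above are just the module axioms plus commutativity of $\Z\bar G$. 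The cases $\delta>0$, $\delta=0$ (where both sides are trivially $1$), and $\delta<0$ can be handled uniformly this way, or the negative case can be deduced from the positive one by substituting $a_k\mapsto a_k^{-1}$ after noting $a_k^\delta-1 = -a_k^\delta(a_k^{-\delta}-1)$.

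The main obstacle, such as it is, is purely bookkeeping: making sure that "dividing by $a_k-1$" is legitimate, i.e.\ that $\frac{a_k^\delta-1}{a_k-1}$ is a well-defined element of $\Z\bar G$ acting on $G'$, and that the step $(a_k-1)\cdot\frac{a_k^\delta-1}{a_k-1}=a_k^\delta-1$ holds as an identity of operators on $G'$. Both are immediate from the Remark together with commutativity of $\Z\bar G$. Thus no induction on the commutator collection is actually needed — the identity (\ref{eq:xi-xj-xk}) already contains all the group-theoretic content, and (\ref{eq:general-Jacoby}) follows by applying a single scalar from $\Z\bar G$ to both sides.
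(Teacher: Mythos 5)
Your proposal is correct and follows essentially the same route as the paper: rewrite $a_k^\delta - 1 = (a_k-1)\cdot\frac{a_k^\delta-1}{a_k-1}$, apply the base identity (\ref{eq:xi-xj-xk}) to $[x_i,x_j]^{a_k-1}$, and then multiply through by the scalar $\frac{a_k^\delta-1}{a_k-1}$ in the module $G'$. The extra bookkeeping you flag about $A$ versus $\Z\bar G$ and the sign of $\delta$ is handled by the Remark the paper states just before the lemma, and you rightly note that the induction you initially contemplated is unnecessary.
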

\begin{proof}
    Note that
    $$
    [x_i,x_j]^{a_k^{\delta} -1} = [x_i,x_j]^{(a_k-1)\frac{a_k^\delta -1}{a_k-1}}
    $$
Now we apply (\ref{eq:xi-xj-xk}) to $[x_i,x_j]^{(a_k-1)}$ and multiply the result (in the module $G'$) by $\frac{a_k^\delta -1}{a_k-1}$.
\end{proof}

\begin{lemma} \label{le:power-in-comm} Let $z,g \in G \smallsetminus G'$ and $\gamma, \delta \in \Z$ then  
\begin{enumerate}
  \item [1)]   $[z,g^{\delta}]=[z,g]^{\frac{{\bar g}^{\delta}-1}{\bar g -1}},$
  \item [2)] $[z^\gamma,g^{\delta}]=[z,g]^{(\frac{{\bar z}^{\gamma}-1}{\bar z -1}) (\frac{{\bar g}^{\delta}-1}{\bar g -1})}.$
\end{enumerate}

\end{lemma}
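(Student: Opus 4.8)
The plan is to prove part 1) first by induction on $\delta$, and then derive part 2) from part 1) applied twice. Throughout I use two facts: conjugation in $G$ acts on the abelian group $G'$ through the abelianization, so $c^h = c^{\bar h}$ in module notation for $c \in G'$, $h \in G$; and since $G'$ is abelian, multiplication in $G'$ corresponds to addition of exponents in the $A$-module, and inversion in $G'$ commutes with the module action.

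For part 1), fix $z,g \in G\smallsetminus G'$ and set $c = [z,g] \in G'$. For $\delta > 0$ I would induct on $\delta$ using $[z, g^{k+1}] = [z, g^k]\,[z,g]^{g^k}$, which is the instance $a = z$, $b = g$, $c = g^k$ of the second identity in \eqref{eq:comm-identities-2} (since $g\cdot g^k = g^{k+1}$). Rewriting the conjugation as module multiplication, this reads $[z, g^{k+1}] = [z, g^k]\cdot c^{\bar g^k}$ in $G'$, so by induction $[z, g^\delta] = c^{1 + \bar g + \cdots + \bar g^{\delta-1}} = c^{\frac{\bar g^\delta - 1}{\bar g - 1}}$, the last expression being legitimate by the Remark preceding Lemma \ref{le:general-Jacobi}. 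The case $\delta = 0$ is immediate: $[z,1] = 1 = c^0$ and $\frac{\bar g^0 - 1}{\bar g - 1} = 0$. For $\delta = -m$ with $m > 0$, I would first extract from $1 = [z, g g^{-1}] = [z, g^{-1}]\,[z,g]^{g^{-1}}$ that $[z, g^{-1}] = c^{-\bar g^{-1}}$, then apply the already-proved $\delta>0$ case to the element $g^{-1}\in G\smallsetminus G'$ to obtain $[z, g^{-m}] = [z, g^{-1}]^{1 + \bar g^{-1} + \cdots + \bar g^{-(m-1)}} = c^{-(\bar g^{-1} + \cdots + \bar g^{-m})}$, and finally check — again via the Remark, using $\bar g^{-m} - 1 = -\bar g^{-m}(\bar g^m - 1)$ — that $-(\bar g^{-1}+\cdots+\bar g^{-m})$ equals $\frac{\bar g^{-m}-1}{\bar g - 1}$ in $\Z\bar G$.

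For part 2), assume first $\gamma \neq 0$, so $z^\gamma \in G\smallsetminus G'$ because $\bar G$ is torsion-free. Applying part 1) with $z^\gamma$ in the role of $z$ gives $[z^\gamma, g^\delta] = [z^\gamma, g]^{\frac{\bar g^\delta - 1}{\bar g - 1}}$. To compute $[z^\gamma, g]$ I would write $[z^\gamma, g] = [g, z^\gamma]^{-1}$ (first identity in \eqref{eq:comm-identities-1}), apply part 1) to get $[g, z^\gamma] = [g,z]^{\frac{\bar z^\gamma - 1}{\bar z - 1}}$, and then use $[g,z]^{-1} = [z,g]$ together with the commutativity of inversion and the module action in $G'$, obtaining $[z^\gamma, g] = [z,g]^{\frac{\bar z^\gamma - 1}{\bar z - 1}}$. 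Substituting back and composing exponents yields the stated formula. The case $\gamma = 0$ is trivial, both sides being $1$.

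I do not expect a genuine obstacle; the only points demanding care are the bookkeeping with negative exponents — namely verifying that the formal quotients $\frac{\bar g^\delta - 1}{\bar g - 1}$ really coincide with the finite sums produced by the induction, which is precisely the content of the Remark — and keeping straight where the $A$-module action on $G'$ is being used versus where ordinary group operations in $G$ are used.
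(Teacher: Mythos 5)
Your proof is correct but takes a genuinely different route from the paper's. The paper proves part 1) in one stroke via the Jacobi identity: from $[z,g^\delta,g][g^\delta,g,z][g,z,g^\delta]=1$ and $[g^\delta,g,z]=1$ it obtains $[z,g^\delta,g]=[g,z,g^\delta]^{-1}$, rewrites this in module notation as $[z,g^\delta]^{\bar g-1}=[z,g]^{\bar g^\delta-1}$, and then divides out $\bar g -1$; part 2) is simply stated to follow. Your argument instead builds the geometric sum $1+\bar g+\cdots+\bar g^{\delta-1}$ directly by induction via the expansion $[a,bc]=[a,c][a,b]^c$, and handles negative $\delta$ by first computing $[z,g^{-1}]$ and then reducing to the positive case for $g^{-1}$. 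The paper's route is shorter, but it implicitly relies on the injectivity of $u\mapsto u^{\bar g-1}$ on $G'$ (true for $g\notin G'$, since $u^{\bar g-1}=[u,g]$ vanishes only when $g\in C_G(u)=G'$ by Mal'cev, but this is left unstated); your route never needs to cancel $\bar g-1$, so it sidesteps that point entirely. Your deduction of 2) — applying 1) with $z^\gamma$ in place of $z$, then using $[z^\gamma,g]=[g,z^\gamma]^{-1}$ and applying 1) again — is exactly what the paper's terse ``2) follows from 1)'' must mean, and your observation that $\gamma\neq 0$ forces $z^\gamma\notin G'$ because $\bar G$ is torsion-free is a hypothesis-check the paper leaves to the reader.
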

\begin{proof} The Jacobi identity 
$$
[z,g^\delta,g][g^\delta,g,z] [g,z,g^\delta] = 1
$$
reduces to 
$$
[z,g^\delta,g] = [g,z,g^\delta]^{-1},
$$
since $[g^\delta,g,z] = 1$.
After rewriting it in the module notation we get
$$
[z,g^\delta]^{\bar g -1} = [z,g]^{{\bar g}^\delta -1.}
$$
This proves 1).  2) follows from 1). 

\end{proof}

\begin{prop} \label{nf-b} Let $X = \{x_1, \ldots,x_n\}$ be a generating set of $G$ and $a_1 = \bar x_1, \ldots, a_n = \bar x_n$. Then every element $u \in G'$ can be presented as the following product
\begin{equation} \label{eq:collected}
    u = \Pi_{1\leq j<i\leq n}[x_i,x_j]^{\beta_{ij}(a_1,\ldots,a_i)},
\end{equation}
where $\beta_{ij}(a_1,\ldots,a_i) \in \Z[a_1,a_1^{-1}, \ldots,a_i,a_i^{-1}]$.\end{prop}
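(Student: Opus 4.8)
The plan is to start from the uncollected presentation $u=\prod_{1\le j<i\le n}[x_i,x_j]^{Q_{ij}}$ with $Q_{ij}\in A$ guaranteed by~\eqref{eq:uncollected}, and then to successively eliminate the ``high'' generators $a_n, a_{n-1}, \ldots$ from the exponents, using Lemma~\ref{le:general-Jacobi} to trade an occurrence of $a_k$ in the exponent of $[x_i,x_j]$ (where $i<k$) for contributions to the exponents of $[x_k,x_j]$ and $[x_k,x_i]$, in which $a_k$ is an allowed variable. Throughout one uses that $G$ is metabelian, so $G'$ is abelian and hence an $A$-module in which products of the above shape may be freely rearranged and collected.

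Concretely, I would prove by downward induction on $\ell\in\{n,n-1,\ldots,1\}$ the statement $P(\ell)$: \emph{$u$ has a presentation $u=\prod_{1\le j<i\le n}[x_i,x_j]^{Q_{ij}}$ with $Q_{ij}\in\Z[a_1^{\pm1},\ldots,a_{\max(i,\ell)}^{\pm1}]$ for all $i,j$.} The base case $P(n)$ is~\eqref{eq:uncollected}, and $P(1)$ is the required presentation~\eqref{eq:collected} (note that $i\ge 2$ throughout, so $\max(i,1)=i$). For the inductive step $P(\ell)\Rightarrow P(\ell-1)$ it suffices to remove the variable $a_\ell$ from the exponent of every $[x_i,x_j]$ with $i<\ell$; for $i\ge\ell$ the exponent already lies in the range allowed by $P(\ell-1)$, and for $i=\ell$ it will only gain terms in $a_1,\ldots,a_\ell$. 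Fix a pair $j<i<\ell$ and decompose $Q_{ij}$ into its monomials not involving $a_\ell$ — which by $P(\ell)$ already lie in $\Z[a_1^{\pm1},\ldots,a_{\ell-1}^{\pm1}]$ — and its monomials of the form $c\,\mu\,a_\ell^{e}$ with $e\neq 0$ and $\mu$ a monomial in $a_1,\ldots,a_{\ell-1}$. For each of the latter, write $a_\ell^{e}=(a_\ell^{e}-1)+1$, so that
\[
[x_i,x_j]^{c\mu a_\ell^{e}}=[x_i,x_j]^{c\mu(a_\ell^{e}-1)}\cdot[x_i,x_j]^{c\mu},
\]
apply Lemma~\ref{le:general-Jacobi} (valid since $j<i<\ell\le n$) and act by $c\mu$ in the $A$-module $G'$ to rewrite the first factor as
\[
[x_\ell,x_j]^{\,c\mu(a_i-1)\frac{a_\ell^{e}-1}{a_\ell-1}}\cdot[x_\ell,x_i]^{\,c\mu(1-a_j)\frac{a_\ell^{e}-1}{a_\ell-1}},
\]
where $\frac{a_\ell^{e}-1}{a_\ell-1}$ is the Laurent polynomial in $a_\ell$ described in the Remark above. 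The two new exponents lie in $\Z[a_1^{\pm1},\ldots,a_\ell^{\pm1}]$ — admissible for $[x_\ell,\cdot]$ — while the leftover factor $[x_i,x_j]^{c\mu}$ has exponent in $\Z[a_1^{\pm1},\ldots,a_{\ell-1}^{\pm1}]$. Performing this for every such monomial of every $Q_{ij}$ with $i<\ell$ and collecting (using commutativity of $G'$) yields a presentation witnessing $P(\ell-1)$.

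I do not expect a genuine obstacle here: once Lemma~\ref{le:general-Jacobi} is available the argument is a finite, if slightly fiddly, bookkeeping. The two points needing care are (i) verifying that the procedure terminates — which it does because at stage $\ell$ material is only ever pushed from commutators $[x_i,\cdot]$ with $i<\ell$ into $[x_\ell,\cdot]$, whose exponents are not required to be cleaned of $a_\ell$, and $\ell$ strictly decreases — and (ii) tracking, at each substitution, exactly which generators the factors $\mu$, $a_i-1$, $1-a_j$ and $\frac{a_\ell^{e}-1}{a_\ell-1}$ involve, so as to confirm that every exponent produced stays within the range prescribed by $P(\ell-1)$.
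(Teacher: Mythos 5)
Your proof is correct and rests on the same key tool as the paper's, namely Lemma~\ref{le:general-Jacobi}, used to trade $a_\ell$-dependence in the exponent of a commutator $[x_i,x_j]$ with $j<i<\ell$ for contributions to the exponents of $[x_\ell,x_j]$ and $[x_\ell,x_i]$. The paper organizes the collection as a recursion on a monomial decomposition $M=M_1a_k^\delta M_2$ (with $M_1\in\langle a_1,\ldots,a_i\rangle$, $k>i$, $M_2\in\langle a_{k+1},\ldots,a_n\rangle$), repeatedly pushing the remaining tail $M_2$ further out; your version instead runs a downward induction on $\ell$ with the explicit invariant $P(\ell)$ controlling which variables may still appear in each $Q_{ij}$. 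This is the same underlying computation, but your loop invariant makes the termination and the index bookkeeping — which the paper leaves somewhat implicit — clean and verifiable.
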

\begin{proof}
We showed above that every element $u \in G'$ can be written in the form (\ref{eq:uncollected}):
$$
 u = \Pi_{1\leq j < i \leq n}[x_i,x_j]^{Q_{i,j}}.
$$
where $Q_{i,j}$ are Laurent polynomials from $A$. Note that this can be done algorithmically. Now we describe a collecting process that transforms products in the form (\ref{eq:uncollected}) to products of the form (\ref{eq:collected}), which we term \emph{collected forms}. Since $G'$ is commutative, it suffices to show how to collect an element $[x_i,x_j]^Q$, where $Q \in A$. Similarly, since $Q$ is a sum of the type $\Sigma_i \gamma_iM_i$, where $M_i \in \bar G$ and $\gamma_i \in \Z$, it suffices to collect $[x_i,x_j]^M$, where $M \in \bar G$. Decompose $M$ into a product $M = M_1a_k^\delta M_2$, where $M_1 \in \langle a_1, \ldots,a_i\rangle$, $k > i$, $\delta \in \Z$, and $M_2 \in \langle a_{k+1}, \ldots,a_n\rangle$. Note that $[x_i,x_j]^{M_1}$ is collected. To collect $[x_i,x_j]^{a_k^\delta}$ write it as 
 $$
 [x_i,x_j]^{a_k^\delta} = [x_i,x_j]^{a_k^\delta-1}[x_i,x_j]
 $$  
and apply (\ref{eq:general-Jacoby}) from Lemma \ref{le:general-Jacobi}.
This results in a collected form $w$, where 
$$
w= \Pi_{1\leq j<i\leq k}[x_i,x_j]^{f_{ij}(a_1,\ldots,a_i)},
$$
for some  $f_{ij}(a_1,\ldots,a_i) \in \Z[a_1,a_1^{-1}, \ldots,a_i,a_i^{-1}]$. 
Note that 
$$
w^{M_1} = \Pi_{1\leq j<i\leq k}[x_i,x_j]^{f_{ij}(a_1,\ldots,a_i)M_1}
$$
is also collected. Since $[x_i,x_j]^M = (w^{M_1})^{M_2}$ the argument above shows that to collect $[x_i,x_j]^M$ it suffices to collect elements of the type $[x_i,x_j]^{M_2}$, where  $1\leq j<i\leq k$.  Now we can repeat the collecting process above. This shows that every element $u \in G'$ can be written in the form (\ref{eq:collected}). 
\end{proof}

\begin{cor} \label{co:forms} Let $X = \{x_1, \ldots,x_n\}$ be a generating set of $G$. Then every element $g \in G$ can be presented as the following product
\begin{equation} \label{eq:normal-form}
    g = x_1^{\gamma_1} \ldots x_n^{\gamma_n}\Pi_{1\leq j<i\leq n}[x_i,x_j]^{\beta_{ij}(a_1,\ldots,a_i)},
\end{equation}
where $\gamma_i \in \Z, 1\leq i\leq n$, $\beta_{ij}(a_1,\ldots,a_i) \in \Z[a_1,a_1^{-1}, \ldots,a_i,a_i^{-1}]$.
\end{cor}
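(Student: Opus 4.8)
The plan is to deduce the corollary directly from Proposition~\ref{nf-b} by stripping off the ``abelian part'' of $g$. Since $G$ is free metabelian of rank $n$, its abelianization $\bar G = G/G'$ is free abelian with basis $a_1 = \bar x_1, \ldots, a_n = \bar x_n$. Hence for any $g \in G$ the image $\bar g \in \bar G$ has a (unique) expression $\bar g = a_1^{\gamma_1}\cdots a_n^{\gamma_n}$ with $\gamma_i \in \Z$. Setting $h = x_1^{\gamma_1}\cdots x_n^{\gamma_n}$ we get $\bar h = \bar g$, so that $u := h^{-1}g \in G'$.

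Next I would apply Proposition~\ref{nf-b} to the element $u \in G'$ to write
$$
u = \Pi_{1\leq j<i\leq n}[x_i,x_j]^{\beta_{ij}(a_1,\ldots,a_i)}, \qquad \beta_{ij}(a_1,\ldots,a_i) \in \Z[a_1,a_1^{-1}, \ldots,a_i,a_i^{-1}].
$$
Then $g = hu = x_1^{\gamma_1}\cdots x_n^{\gamma_n}\,\Pi_{1\leq j<i\leq n}[x_i,x_j]^{\beta_{ij}(a_1,\ldots,a_i)}$, which is precisely the asserted form~\eqref{eq:normal-form}. Note that the corollary claims only existence of such a presentation, not uniqueness, so no further analysis of the collecting process or of the module $G'$ is needed here.

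I do not expect a genuine obstacle: this is an essentially immediate consequence of Proposition~\ref{nf-b}. The only points worth stating explicitly are that the representation of $\bar g$ in the free abelian group $\bar G$ is by an ordered product $a_1^{\gamma_1}\cdots a_n^{\gamma_n}$ in the fixed order of the basis, and that the exponent notation $[x_i,x_j]^{\beta_{ij}}$ appearing in~\eqref{eq:normal-form} refers to the same $A$-module action of $A = \Z[a_1,a_1^{-1},\ldots,a_n,a_n^{-1}]$ on $G'$ that was fixed in Section~\ref{se:3.1} and used in Proposition~\ref{nf-b}; both are immediate from the setup.
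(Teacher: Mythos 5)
Your proposal is correct and is exactly the intended derivation: the paper gives no separate proof of Corollary~\ref{co:forms} because it follows from Proposition~\ref{nf-b} by precisely the reduction you describe, namely splitting $g$ as $h\cdot u$ with $h = x_1^{\gamma_1}\cdots x_n^{\gamma_n}$ reproducing $\bar g \in \bar G$ and $u = h^{-1}g \in G'$ collected via the proposition. The only point worth noting (and you do handle it implicitly) is that since $\{\bar x_1,\dots,\bar x_n\}$ is an $n$-element generating set of $\bar G \cong \Z^n$, it is automatically a basis, so the exponents $\gamma_i$ exist; but in fact mere generation of $\bar G$ suffices for the existence claim being proved.
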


\begin{prop} \label{nf-2} Let $X = \{x_1, \ldots,x_n\}$ be a basis of $G$ as a free metabelian group. Then every element $u \in G'$ can be uniquely presented as the following product
\begin{equation} \label{eq:collected-2}
    u = \Pi_{1\leq j<i\leq n}[x_i,x_j]^{\beta_{ij}(a_1,\ldots,a_i)},
\end{equation}
where $\beta_{ij}(a_1,\ldots,a_i) \in \Z[a_1,a_1^{-1}, \ldots,a_i,a_i^{-1}]$.\end{prop}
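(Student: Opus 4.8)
The plan is to take existence for granted from Proposition \ref{nf-b} and to concentrate on uniqueness. Since $X$ is a basis of $G=F/F''$ (with $F$ free on $x_1,\dots,x_n$), the abelianization $\bar G$ is free abelian with basis $a_1,\dots,a_n$, the epimorphism $\eta\colon A\to\Z\bar G$ is an isomorphism, and $G'$ is a module over $A=\Z[a_1^{\pm1},\dots,a_n^{\pm1}]$. By Proposition \ref{nf-b}, applied to $X$ regarded merely as a generating set, every $u\in G'$ has a presentation of the form (\ref{eq:collected-2}). As $G'$ is abelian, two presentations $\Pi_{j<i}[x_i,x_j]^{\beta_{ij}}=\Pi_{j<i}[x_i,x_j]^{\beta'_{ij}}$ of the same element yield $\Pi_{j<i}[x_i,x_j]^{\beta_{ij}-\beta'_{ij}}=1$, with $\beta_{ij}-\beta'_{ij}$ again in $\Z[a_1^{\pm1},\dots,a_i^{\pm1}]$. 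Hence uniqueness is equivalent to the assertion that if
\[
\Pi_{1\le j<i\le n}[x_i,x_j]^{\beta_{ij}}=1 ,\qquad \beta_{ij}\in\Z[a_1^{\pm1},\dots,a_i^{\pm1}],
\]
then all $\beta_{ij}=0$. I would prove this by induction on $n$, treating $n\le 2$ at the end.

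For the inductive step, first apply the retraction $\rho\colon G\to H:=\langle x_1,\dots,x_{n-1}\rangle$ determined by $x_n\mapsto 1$ and $x_i\mapsto x_i$ for $i<n$; here $H$ is free metabelian of rank $n-1$ with basis $x_1,\dots,x_{n-1}$ (it is a retract of $G$). Since the $\beta_{ij}$ with $i<n$ involve none of $a_n$, the map $\rho$ sends each factor $[x_i,x_j]^{\beta_{ij}}$ with $i<n$ to $[x_i,x_j]^{\beta_{ij}}$ and kills each factor $[x_n,x_j]^{\beta_{nj}}$, so $\Pi_{1\le j<i\le n-1}[x_i,x_j]^{\beta_{ij}}=1$ in $H$; by the inductive hypothesis $\beta_{ij}=0$ whenever $i\le n-1$. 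It remains to show that $\Pi_{1\le j<n}[x_n,x_j]^{\beta_{nj}}=1$ (with $\beta_{nj}\in A$) forces all $\beta_{nj}=0$, i.e.\ that $[x_n,x_1],\dots,[x_n,x_{n-1}]$ are $A$-linearly independent in $G'$. For this I would invoke the Magnus embedding of $F/F''$, which realizes $G'$ as a submodule of the free $A$-module $A^n=\bigoplus_{k=1}^n A e_k$ via the abelianized Fox derivatives $c\mapsto(\overline{\partial c/\partial x_1},\dots,\overline{\partial c/\partial x_n})$ (strictly speaking one composes with the antipode $a_i\mapsto a_i^{-1}$ of $A$ to make this $A$-linear, which changes nothing below). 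A direct computation shows that the image of $[x_n,x_j]$ is supported on the coordinates $n$ and $j$, with $e_j$-entry equal to $a_n-1$ up to a unit of $A$. Projecting the relation onto the $e_j$-coordinate for each $j<n$, only the factor $[x_n,x_j]^{\beta_{nj}}$ contributes, giving $q\,\beta_{nj}(a_n-1)=0$ in $A$ for a unit $q$; since $A$ is an integral domain and $a_n-1\ne 0$, this yields $\beta_{nj}=0$. The base case $n=2$ is exactly the last two sentences (the retraction step being vacuous), and $n=1$ is trivial.

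The only non-elementary input, and hence the main obstacle, is the injectivity of the abelianized Fox-derivative map on $F'/F''$ — equivalently the Magnus embedding theorem — which I would quote (e.g.\ from \cite{KMS} or a standard reference on free metabelian groups) rather than reprove; the computation of the image of $[x_n,x_j]$, the effect of $\rho$ on coefficients, and the integral-domain step are routine. I note two alternative routes. First, uniqueness of (\ref{eq:collected-2}) is essentially the Magnus--Romanovskii normal form for the module $G'$ (cf.\ \cite{MRom}), so one could simply cite that. Second, one could bypass the Magnus embedding and instead prove that the collecting procedure in the proof of Proposition \ref{nf-b} is confluent, by a diamond-lemma argument whose only overlap ambiguities are the Jacobi relations (\ref{eq:xi-xj-xk}); this is more self-contained but needs more bookkeeping.
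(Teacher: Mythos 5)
Your proof is correct and follows the same inductive skeleton as the paper's — take existence from Proposition \ref{nf-b}, reduce uniqueness to the statement that $\Pi_{j<i}[x_i,x_j]^{\beta_{ij}}=1$ forces all $\beta_{ij}=0$, and dispose of the factors with $i<n$ by the retraction killing $x_n$ (the paper phrases this as the quotient $G\to G/\mathrm{ncl}(x_n)$, which is the same thing). Where you diverge is in the second half of the inductive step, the $A$-linear independence of $[x_n,x_1],\dots,[x_n,x_{n-1}]$. The paper does not invoke the Magnus embedding here; instead it applies the endomorphism $\lambda_N\colon x_1\mapsto x_2^N$, $x_i\mapsto x_i$ for $i\ge 2$, with $N$ large, pushing the leftover product into the rank-$(n-1)$ subgroup $\langle x_2,\dots,x_n\rangle$ and appealing once more to the inductive hypothesis, with the recursion bottoming out at $n=2$ via Bachmuth's theorem that $G'$ is a free $\Z[a_1^{\pm1},a_2^{\pm1}]$-module on $[x_2,x_1]$. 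Your route replaces that second endomorphism trick by a single application of the abelianized Fox derivatives $d_j$ and the computation $d_j([x_n,x_k])=0$ for $k\ne j$, $d_j([x_n,x_j])=a_j^{-1}a_n^{-1}(a_n-1)$; the projection onto the $e_j$-coordinate then isolates $\beta_{nj}(a_n-1)=0$ up to a unit, and integrality of $A$ finishes it. This is cleaner and avoids the delicate bookkeeping about choosing $N$ large enough and separating the resulting monomials, at the cost of quoting the full Magnus embedding rather than only its rank-2 instance; since the paper already develops Fox derivatives in Section \ref{se:3.1} and cites \cite{MRom} for exactly this normal form, your variant fits the paper's toolkit perfectly well.
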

\begin{proof}
Observe that the images of $X$ in $\bar G$ form a basis of $\bar G$, hence $\Z\bar G \simeq A$. By Proposition \ref{nf-b} every element $u \in G'$ has some  decomposition of the form (\ref{eq:collected-2}). To prove uniqueness of the forms (\ref{eq:collected}) we use induction on $n$. Let  
$$
u = \Pi_{1\leq j<i\leq n}[x_i,x_j]^{\beta_{ij}(a_1,\ldots,a_i)},
$$
where $\beta_{ij}(a_1,\ldots,a_i) \in \Z[a_1,a_1^{-1}, \ldots,a_i,a_i^{-1}]$. Assume that $u = 1$. We need to show that $\beta_{ij} = 0$ for all $1\leq j<i\leq n$,

For $n = 2$ the $\Z[a_1^{\pm 1}, a_2^{\pm 1}]$-module $G'$ is free with basis $[x_2,x_1]$ (see \cite{Bahmuth}), so the result follows.

For $n > 2$ consider the canonical  epimorphism $\mu_n: G \to H = G/ncl(x_n)$, where $ncl(x_n)$ is the normal closure of $x_n$ in $G$, so $\mu_n(x_i) = x_i, 1\leq i < n$, $\mu_n(x_n) = 1$. Note that $H$ is a free metabelian group of rank $n-1$. Clearly,
$$
\mu_n(u) = \Pi_{1\leq j<i\leq n-1}[x_i,x_j]^{\beta_{ij}(a_1,\ldots,a_i)}
$$
 Hence by induction $\beta_{ij} = 0$ for $1\leq j<i\leq n-1$. Therefore, 
 $$
u = [x_n,x_1]^{\beta_{n1}(a_1,\ldots,a_n)} \ldots [x_n,x_{n-1}]^{\beta_{n,n-1}(a_1,\ldots,a_n)}.
$$
For an integer $N >0$ consider a homomorphism $\lambda_N: G \to K = \langle x_2, \ldots,x_n\rangle \leq G$ such that $\lambda_N(x_1) = x_2^N, \lambda_N(x_i) = x_i$ for $2\leq i \leq n$. Clearly, $\lambda_N$ induces the corresponding endomorphism on $\bar G$, hence on the ring $A$ and on the $A$-module $G'$. We continue to denote it by $\lambda_N$. Note that $K$ is a free metabelian group or rank $n-1$.

If $\bar g = a_1^{\delta_1} \ldots a_n^{\delta_n}$ then $\lambda_N(\bar g) = a_2^{\delta_1N+\delta_2}a_3^{\delta_3} \ldots a_n^{\delta_n}$. One can chose a large enough $N$ such that $\lambda_n$ is injective on all the monomials that occur in $\beta_{nj}$, $j = 1, \ldots,n-1$. This implies that if $\beta_{nj} \neq 0$ then $\lambda_N(\beta_{ij}) \neq 0$.  
Now 
$$
\lambda_N(u) = [x_n,x_2^N]^{\lambda_N(\beta_{n1})} [x_n,x_2]^{\lambda_n(\beta_{n2})} \ldots [x_n,x_{n-1}]^{\lambda_N(\beta_{n,n-1})}
$$
By Lemma \ref{le:power-in-comm} $[x_n,x_2^N] = [x_n,x_2]^{\frac{a_2^N -1}{a_2-1}}$.  By induction we get
$$
\frac{a_2^N -1}{a_2-1}\lambda_N(\beta_{n1}) = 0, \ \lambda_n(\beta_{n2}) =0, \ \ldots, \ \lambda_N(\beta_{n,n-1}) = 0.
$$
Hence, $\beta_{n1} = 0, \ldots, \beta_{n,n-1} = 0$. This proves uniqueness. 
\end{proof}

\begin{cor} \label{co:normal-forms} Let $X = \{x_1, \ldots,x_n\}$ be a basis of $G$ as a free metabelian group. Then every element $g \in G$ can be uniquely presented as the following product, termed the \emph{normal form} of $g$ relative to $X$:
\begin{equation} \label{eq:normal-form}
    g = x_1^{\gamma_1} \ldots x_n^{\gamma_n}\Pi_{1\leq j<i\leq n}[x_i,x_j]^{\beta_{ij}(a_1,\ldots,a_i)},
\end{equation}
where $\gamma_i \in \Z, 1\leq i\leq n$, $\beta_{ij}(a_1,\ldots,a_i) \in \Z[a_1,a_1^{-1}, \ldots,a_i,a_i^{-1}]$.
\end{cor}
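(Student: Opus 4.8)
The existence of a decomposition of the form \eqref{eq:normal-form} is exactly Corollary \ref{co:forms}, which holds for an arbitrary generating set of $G$; so the plan is to prove only the uniqueness, and it is here that the hypothesis that $X$ is a \emph{basis} (not merely a generating set) will be used essentially. Suppose
$$
g = x_1^{\gamma_1} \ldots x_n^{\gamma_n}\,\Pi_{1\leq j<i\leq n}[x_i,x_j]^{\beta_{ij}} = x_1^{\gamma_1'} \ldots x_n^{\gamma_n'}\,\Pi_{1\leq j<i\leq n}[x_i,x_j]^{\beta_{ij}'}
$$
are two presentations of this type, where $\beta_{ij},\beta_{ij}'$ are Laurent polynomials in $a_1,\ldots,a_i$.

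First I would apply the abelianization epimorphism $\bar{}\,\colon G \to \bar G = G/G'$. Every commutator $[x_i,x_j]$ lies in $G'$, so the two commutator products die under $\bar{}$, and we are left with $\bar g = a_1^{\gamma_1}\ldots a_n^{\gamma_n} = a_1^{\gamma_1'}\ldots a_n^{\gamma_n'}$ in $\bar G$. Since $X$ is a basis of the free metabelian group $G$, its image $a_1,\ldots,a_n$ is a basis of the free abelian group $\bar G$, so comparing exponents gives $\gamma_i = \gamma_i'$ for all $i$. Hence the "generator parts" $x_1^{\gamma_1}\ldots x_n^{\gamma_n}$ of the two presentations coincide; cancelling this common factor on the left leaves the single equation
$$
\Pi_{1\leq j<i\leq n}[x_i,x_j]^{\beta_{ij}} = \Pi_{1\leq j<i\leq n}[x_i,x_j]^{\beta_{ij}'}
$$
in the abelian group $G'$. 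Now Proposition \ref{nf-2} — the uniqueness of collected forms in $G'$ relative to a basis — applies directly and yields $\beta_{ij} = \beta_{ij}'$ for all $1\leq j<i\leq n$, which finishes the argument.

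In truth there is no hard step: the corollary is just the amalgamation of two results already in hand, the existence statement of Corollary \ref{co:forms} and the uniqueness statement of Proposition \ref{nf-2}, glued together by the trivial observation that abelianization cleanly separates the generator part of a normal form from its commutator part. The only point worth stressing is why the basis hypothesis cannot be dropped: for a merely generating set $X$ the epimorphism $\eta\colon A \to \Z\bar G$ need not be injective and $\bar G$ need not be free abelian on $a_1,\ldots,a_n$, so neither the exponents $\gamma_i$ nor the polynomials $\beta_{ij}$ are determined — which is precisely the reason Proposition \ref{nf-2}, and with it this corollary, is stated for bases only.
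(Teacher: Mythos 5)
Your proof is correct and is exactly the argument the paper leaves implicit: existence from Corollary \ref{co:forms}, determination of the exponents $\gamma_i$ by passing to the free abelian quotient $\bar G$ (where the basis hypothesis is used), and then uniqueness of the $\beta_{ij}$ from Proposition \ref{nf-2} after cancelling the common factor $x_1^{\gamma_1}\cdots x_n^{\gamma_n}$. Nothing further is needed.
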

Our next task is to describe the multiplication in $G$  in terms of normal forms. We need some notation. Let $X = \{x_1, \ldots,x_n\}$ be a finite subset of $G$, order $X$ as $x_1< \ldots<x_n$ and form a tuple $x = (x_1, \ldots,x_n)$. Similarly, order the set $C_X = \{[x_i,x_j] \mid 1\leq j <i \leq n \}$, say by introducing the lexicographical order on pairs of indices $(i,j)$,  and form a  tuple $c_x = ([x_2,x_1],[x_3,x_1],  \ldots ,[x_n,x_{n-1}])$. Denote by $\tilde x$ the concatenation $x\cdot c_x = (x_1,\ldots  x_n,[x_2,x_1], \ldots,[x_n,x_{n-1}])$ of $x$ and $c_x$. If $X$ is a basis of $G$ as a free metabelian group  then $\tilde x$ is termed  a \emph{normal form  basis}, or a \emph{module basis}   of $G$, and it's length $n+n(n-1)/2$ is denoted by $dim(G)$.

For $\gamma = (\gamma_1, \ldots,\gamma_n) \in \Z^n$ put 
$$
x^\gamma = x_1^{\gamma_1} \ldots x_n^{\gamma_n},
$$
and for polynomials $\beta_{ij}(a_1,\ldots,a_i) \in \Z[a_1,a_1^{-1}, \ldots,a_i,a_i^{-1}]$, where $a_1 = \bar x_1, \ldots,a_n = \bar x_n$,  form a tuple  $\beta = (\beta_{2,1}, \ldots,\beta_{n,n-1})$ and put 
$$
c_x^\beta  = [x_2,x_1]^{\beta_{2,1}} \ldots, [x_n,x_{n-1}]^{\beta_{n,n-1}}.
$$
If $g \in G$ and $g = x^\gamma c_x^\beta$ then the tuple $t_{\tilde x}(g) = \gamma \cdot \beta$ is called the tuple  of \emph{ coordinates}  (or ${\tilde x}$-coordinates) of $g$ with respect $\tilde x$.  We write $t_{\tilde x}(g) = (t_1(g), \ldots,t_d(g))$, where $d = dim(G)$.    For $g,h \in G$ the multiplication in $G$ completely determines the coordinates $t_{\tilde x}(gh)$ of the product $gh$ in terms of the coordinates $t_{\tilde x}(g)$ and $t_{\tilde x}(h)$. Hence, in this sense $t_i(gh)$ can be viewed as a function of $t_{\tilde x}(g)$ and $t_{\tilde x}(h)$. Our next goal is to describe these functions. 

Denote by $\varepsilon_\ell(z)$ the function $\Z \to \Z\bar G$ defined by $\varepsilon_\ell(z) = \frac{a_\ell^z - 1}{a_\ell -1}$, $\ell = 1, \ldots,n$. Let $\mathcal F$ be the set
 of all formal expressions obtained from variables $V = \{z_1, z_2, \ldots\}$, symbols $0$ and $1$,  and functions $\varepsilon_1, \ldots, \varepsilon_n$ (in the variables from $V$) by finitely many operations of addition $+$ and multiplication $\cdot$. Note that every function $\varepsilon_\ell$, and hence every expression $f(z_1, \dots,z_n) \in \mathcal F$, naturally defines a function $f: R^n \to R$  in every integral domain $R$ (we denote the expression and the corresponding function by the same symbol). Since $R$ is associative, commutative, and unitary, every such function $f$ can be presented in the form 
 \begin{equation} \label{eq:f-p}
 f = p(z_1, \ldots,z_m,\varepsilon_{\ell_1}(y_1), \ldots,\varepsilon_{\ell_s}(y_s)),
 \end{equation}
 where $p = p(z_1, \ldots, z_m, u_1, \ldots,u_s)$ is a polynomial with integer coefficients, and where each variable $u_i$ is replaced by the function $\varepsilon_{\ell_i}(y_i)$ with $1\leq \ell_i \leq n$ and $y_i \in V$.
We are going to prove now that the coordinate functions $t_i(gh)$ are defined by some functions from $\mathcal F$ \emph{uniformly} in the set $X = \{x_1, \ldots,x_n\}$, that is for every $i, 1\leq i\leq d$,  there is a function 
$f_i \in \mathcal F$ (we may assume $f_i$ is in the form (\ref{eq:f-p})) such that for every $n$-element subset $X = \{x_1, \ldots,x_n\} \subseteq G$ ($n$ is fixed upfront)  if  $t_{\tilde x}(g)$ and $t_{\tilde x}(h)$ are coordinates of some elements $g,h \in G$ with respect to $\tilde x$  (which may not be a module basis) then   $t_i(gh) = f_i(t_{\tilde x}(g),t_{\tilde x}(h))$. To prove this we need two technical results.
 
\begin{lemma} \label{le:P} Let $X = \{x_1, \ldots,x_n\}$ be a set of elements of $G$. Then for any $\gamma_1, \ldots, \gamma_n, \delta_1, \ldots,\delta_n \in \Z$ 
          $$x_1^{\gamma _1}\ldots x_n^{\gamma_n}x_1^{\delta _1}\ldots x_n^{\delta_n}=x_1^{\gamma _1+\delta _1}\ldots x_n^{\gamma_n+\delta _n}\overline\Pi,$$
      where 
 $$
 \overline\Pi= \Pi _{j<i}
 [x_i,x_j]^{\frac{(a_i^{\gamma _i}-1)(a_j^{\gamma _j}-1)}{(a_i-1)(a_j-1)}a_{j+1}^{\delta _{j+1}}\ldots a_n^{\delta _n}}.
 $$ 
 \end{lemma}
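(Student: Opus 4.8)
The plan is to prove the identity by double induction, first establishing a ``commutation rule'' that moves a single power $x_i^{\delta_i}$ past a word $x_1^{\gamma_1}\cdots x_n^{\gamma_n}$, and then assembling the full statement. The key observation is that in a metabelian group $G'$ is abelian, so all commutators that arise in the rearrangement collect freely as a product in $G'$ and, moreover, their $A$-module exponents multiply and add according to the ring structure of $A$; the functions $\varepsilon_\ell$ appear precisely because of Lemma \ref{le:power-in-comm}, which tells us $[x_i^{\gamma_i},x_j^{\gamma_j}] = [x_i,x_j]^{\varepsilon_i(\gamma_i)\varepsilon_j(\gamma_j)}$ (in the notation $\varepsilon_\ell(z)=\frac{a_\ell^z-1}{a_\ell-1}$).

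The first step is to show, by induction on $n$ (or on the length of the word being moved past), that for a single generator power
$$
x_1^{\gamma_1}\cdots x_n^{\gamma_n}\, x_k^{\delta_k}
= x_1^{\gamma_1}\cdots x_{k-1}^{\gamma_{k-1}} x_k^{\gamma_k+\delta_k} x_{k+1}^{\gamma_{k+1}}\cdots x_n^{\gamma_n}\cdot \Pi_{i>k}[x_i,x_k]^{\frac{(a_i^{\gamma_i}-1)}{a_i-1}a_k^{\delta_k}},
$$
using the commutator identities \eqref{eq:comm-identities-1}--\eqref{eq:comm-identities-2} together with Lemma \ref{le:power-in-comm}: to pull $x_k^{\delta_k}$ leftward past $x_n^{\gamma_n}$ one writes $x_n^{\gamma_n}x_k^{\delta_k} = x_k^{\delta_k}x_n^{\gamma_n}[x_n^{\gamma_n},x_k^{\delta_k}]$ and then $[x_n^{\gamma_n},x_k^{\delta_k}]=[x_n,x_k]^{\varepsilon_n(\gamma_n)a_k^{\delta_k}}$ (note $\varepsilon_k(\delta_k)(a_k-1)=a_k^{\delta_k}-1$, so the bookkeeping with $\varepsilon$ versus $a^\delta-1$ must be done carefully), and this new commutator, lying in $G'$, commutes with everything and can be carried to the far right; repeat for $x_{n-1}^{\gamma_{n-1}}$, etc. One must be slightly careful that as the commutator $[x_i,x_k]$ is carried past the remaining $x_j^{\gamma_j}$ its module exponent is unchanged because it ends up on the extreme right. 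The second step is an induction on the number of nontrivial $\delta_i$: having moved $x_1^{\delta_1}$ into place one is left with $x_1^{\gamma_1+\delta_1}x_2^{\gamma_2}\cdots x_n^{\gamma_n}\cdot x_2^{\delta_2}\cdots x_n^{\delta_n}$ times a product of commutators already in $G'$; one then applies the single-power rule repeatedly to $x_2^{\delta_2}$, then $x_3^{\delta_3}$, and so on, each time conjugating the previously-accumulated commutators of $G'$ — but since they lie in $G'$ and we only multiply by further elements of $G'$, the abelianness of $G'$ means everything simply multiplies together in $A$-module exponent form, and the exponent of $[x_i,x_j]$ accumulates the factor $a_{j+1}^{\delta_{j+1}}\cdots a_n^{\delta_n}$ exactly as claimed (the indices $>j$ are those generators $x_{j+1}^{\delta_{j+1}},\dots$ that $[x_i,x_j]$ still has to commute past before reaching the right end).

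I expect the main obstacle to be purely organizational: tracking the exact module exponent of each commutator $[x_i,x_j]$ as it is created and then shuffled to the right through the remaining $x_\ell^{\delta_\ell}$'s, and verifying that the product $\frac{(a_i^{\gamma_i}-1)(a_j^{\gamma_j}-1)}{(a_i-1)(a_j-1)}a_{j+1}^{\delta_{j+1}}\cdots a_n^{\delta_n}$ is precisely what one obtains — in particular getting the range of the trailing monomial right (it is $a_{j+1}^{\delta_{j+1}}\cdots a_n^{\delta_n}$, not involving $\delta_i$ or lower indices). There is no conceptual difficulty since $G$ is metabelian and hence $G'$ is abelian and an $A$-module; once that is invoked, every step reduces to an identity in the commutative ring $A$. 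I would present the single-power rule as a standalone sublemma, prove it by the indicated downward induction on the position $k$, and then obtain Lemma \ref{le:P} by iterating it, absorbing each newly produced commutator into $\overline\Pi$.
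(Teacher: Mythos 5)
Your plan (move $x_k^{\delta_k}$ leftward, collect commutators, iterate) is in spirit the paper's, but there is a genuine conceptual gap in the single-power sublemma. You assert that a newly created commutator, being in $G'$, ``commutes with everything and can be carried to the far right'' with its module exponent ``unchanged because it ends up on the extreme right.'' That conflates \emph{abelian} with \emph{central}: in a free metabelian group $G'$ is abelian but not central, so $[x_i,x_k]^Q\, x_j^{\gamma_j} = x_j^{\gamma_j}[x_i,x_k]^{Q a_j^{\gamma_j}}$ --- carrying a commutator past $x_j^{\gamma_j}$ conjugates it, and that conjugation is exactly what the $A$-module structure records. Compounding this, the proof misapplies Lemma~\ref{le:power-in-comm}: it gives $[x_n^{\gamma_n},x_k^{\delta_k}] = [x_n,x_k]^{\varepsilon_n(\gamma_n)\varepsilon_k(\delta_k)}$, not $[x_n,x_k]^{\varepsilon_n(\gamma_n)a_k^{\delta_k}}$. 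Your single-power rule therefore fails even in the simplest case: for $n=2$, $\gamma_2=\delta_1=1$, all other exponents zero, $x_2 x_1 = x_1 x_2 [x_2,x_1]$, whereas your formula yields $x_1 x_2 [x_2,x_1]^{a_1}$. The corrected exponent on $[x_i,x_k]$, if one collects commutators to the far right, is $\frac{(a_i^{\gamma_i}-1)(a_k^{\delta_k}-1)}{(a_i-1)(a_k-1)}\,a_{i+1}^{\gamma_{i+1}}\cdots a_n^{\gamma_n}$.

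The paper's proof sidesteps the conjugation-by-$\gamma$-powers problem by never moving commutators rightward: it pushes each $x_j^{\delta_j}$ leftward and leaves the commutators where they are deposited, so a commutator $[x_i^{\gamma_i},x_j^{\delta_j}]$ only changes when a later $x_s^{\delta_s}$ (for $s>j$) passes over it while itself moving left --- that is the source of the $a_{j+1}^{\delta_{j+1}}\cdots a_n^{\delta_n}$ factor in $\overline\Pi$. Your second paragraph does track this conjugation by $\delta$-powers correctly, so the oversight is localized to the sublemma; fixing it requires either adopting the paper's ``leave commutators in place'' bookkeeping, or, if you insist on carrying each new commutator to the extreme right immediately, carefully carrying the extra $a_{i+1}^{\gamma_{i+1}}\cdots a_n^{\gamma_n}$ conjugation factor that move generates.
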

\begin{proof} In the product 
$$
x_1^{\gamma _1}\ldots x_n^{\gamma_n}x_1^{\delta _1}\ldots x_n^{\delta_n}
$$
we move every $x_j^{\delta_j}$ to the left, using the formulas 
$$
x_i^{\gamma_i}x_j^{\delta_j} = x_j^{\delta_j}x_i^{\gamma_i}[x_i^{\gamma_i},x_j^{\delta_j}],
$$ 
and also the formulas 
$$
[x_i,x_s]^Q x_j^{\delta_j} = x_j^{\delta_j}[x_i,x_s]^{Qa_j^{\delta_j}}
$$
whenever $x_j^{\delta_j}$ meets a commutator $[x_i,x_s]^Q$, $Q \in A$, on its immediate left. We do it  until $x_j^{\delta_j}$ reaches $x_j^{\gamma_j}$. When all $x_j^{\delta_j}$ moved to the left the result will be 
$$
x_1^{\gamma _1+\delta _1}\ldots x_n^{\gamma_n+\delta _n} \overline\Pi= \Pi _{j<i}
 [x_i^{\gamma_i},x_j^{\delta_j}]^{a_{j+1}^{\delta _{j+1}}\ldots a_n^{\delta _n}}
$$
Now the result follows from Lemma \ref{le:power-in-comm}. 

\end{proof}

\begin{lemma} \label{le:P-2} Let $X = \{x_1, \ldots,x_n\}$ be a set of elements of $G$ and elements  $g, h \in G$ be given as products 
$$
g = x_1^{\gamma _1}\ldots x_n^{\gamma_n}\Pi_{1\leq j<i\leq n}[x_i,x_j]^{\beta_{ij}(a_1,\ldots,a_i)}
$$
and 
$$
h= x_1^{\delta _1}\ldots x_n^{\delta_n}\Pi_{1\leq j<i\leq n}[x_i,x_j]^{\nu_{ij}(a_1,\ldots,a_i)}.
$$
Then
\begin{equation} \label{eq:almost-normal}
    gh = x_1^{\gamma _1+\delta _1}\ldots x_n^{\gamma_n+\delta _n}{\overline\Pi}\ \Pi_{1\leq j<i\leq n}[x_i,x_j]^{a_1^{\delta _1}\ldots a_n^{\delta_n}\beta_{ij}(a_1,\ldots,a_i)+\nu_{ij}(a_1,\ldots,a_i)},
\end{equation}
where $\bar\Pi$ is defined in Lemma \ref {le:P}. 

\end{lemma}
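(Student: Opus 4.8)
The plan is to slide the commutator part of $g$ rightward past the $x$-part of $h$, then apply Lemma~\ref{le:P} to the two $x$-parts, and finally merge all commutator factors inside the abelian subgroup $G'$.

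Concretely, I would write $P_g = \Pi_{1\leq j<i\leq n}[x_i,x_j]^{\beta_{ij}(a_1,\ldots,a_i)}$ and $P_h = \Pi_{1\leq j<i\leq n}[x_i,x_j]^{\nu_{ij}(a_1,\ldots,a_i)}$, so that $g = x_1^{\gamma_1}\ldots x_n^{\gamma_n}P_g$ and $h = x_1^{\delta_1}\ldots x_n^{\delta_n}P_h$. The one elementary fact needed is that for $u\in G'$ one has $u\, x_\ell^{\delta_\ell} = x_\ell^{\delta_\ell}\, u^{a_\ell^{\delta_\ell}}$, i.e.\ conjugation by $x_\ell^{\delta_\ell}$ acts on the $A$-module $G'$ as multiplication by $a_\ell^{\delta_\ell}$; this is exactly the rule already used in the proof of Lemma~\ref{le:P}. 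Applying it successively for $\ell=1,\ldots,n$ gives
$$
P_g\, x_1^{\delta_1}\ldots x_n^{\delta_n} = x_1^{\delta_1}\ldots x_n^{\delta_n}\, P_g^{\,a_1^{\delta_1}\ldots a_n^{\delta_n}},
$$
and hence
$$
gh = x_1^{\gamma_1}\ldots x_n^{\gamma_n}\, x_1^{\delta_1}\ldots x_n^{\delta_n}\, P_g^{\,a_1^{\delta_1}\ldots a_n^{\delta_n}}\, P_h.
$$

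Next I would apply Lemma~\ref{le:P} to rewrite $x_1^{\gamma_1}\ldots x_n^{\gamma_n}\, x_1^{\delta_1}\ldots x_n^{\delta_n}$ as $x_1^{\gamma_1+\delta_1}\ldots x_n^{\gamma_n+\delta_n}\,\overline\Pi$ with $\overline\Pi$ as in that lemma, obtaining
$$
gh = x_1^{\gamma_1+\delta_1}\ldots x_n^{\gamma_n+\delta_n}\,\overline\Pi\, P_g^{\,a_1^{\delta_1}\ldots a_n^{\delta_n}}\, P_h.
$$
Since $\overline\Pi$, $P_g^{\,a_1^{\delta_1}\ldots a_n^{\delta_n}}$ and $P_h$ all lie in the abelian group $G'$, they commute, and using $[x_i,x_j]^{Q}[x_i,x_j]^{Q'}=[x_i,x_j]^{Q+Q'}$ in the $A$-module $G'$ the last two factors combine to
$$
P_g^{\,a_1^{\delta_1}\ldots a_n^{\delta_n}}\, P_h = \Pi_{1\leq j<i\leq n}[x_i,x_j]^{a_1^{\delta_1}\ldots a_n^{\delta_n}\beta_{ij}(a_1,\ldots,a_i)+\nu_{ij}(a_1,\ldots,a_i)},
$$
which gives formula~\eqref{eq:almost-normal}.

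I do not expect a genuine obstacle here: the only delicate points are bookkeeping, namely that one must slide $P_g$ past $x_1^{\delta_1}$ first so that the accumulated module exponent comes out as $a_1^{\delta_1}\ldots a_n^{\delta_n}$ (the order being immaterial since $A$ is commutative), and that $\overline\Pi\in G'$ so it can simply be carried along unchanged while the remaining commutator factors are merged. I would also note that the resulting expression is only an ``almost'' collected form, since $\overline\Pi$ together with the final commutator product still has to be brought into the shape~\eqref{eq:collected}; that final collecting step is deferred.
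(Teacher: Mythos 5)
Your proof is correct and is essentially the same argument the paper gestures at: slide the $G'$-part of $g$ past the $x$-part of $h$ using the conjugation rule $u\,x_\ell^{\delta_\ell} = x_\ell^{\delta_\ell}\,u^{a_\ell^{\delta_\ell}}$, invoke Lemma~\ref{le:P} on the two $x$-parts, and merge the resulting $G'$-factors additively in the $A$-module $G'$. You have simply written out the details the paper omits.
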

\begin{proof}
To prove this, we use the same argument as in Lemma \ref {le:P} and the result itself  from Lemma \ref {le:P}.
\end{proof}

Now we are ready to describe the coordinate functions $t_1(gh), \ldots,t_d(gh)$.

\begin{prop} \label{pr:mult-normal-forms}
     For every $1\leq j < i \leq n$,  there is a function 
$f_{ij} \in \mathcal F$  (we may assume $f_{ij}$ is in the form (\ref{eq:f-p})) such that for any subset $X = \{x_1, \ldots,x_n\}$ of $G$ and for any  $g, h \in G$ given as products 
$$
g = x_1^{\gamma _1}\ldots x_n^{\gamma_n}\Pi_{1\leq j<i\leq n}[x_i,x_j]^{\beta_{ij}(a_1,\ldots,a_i)}
$$
and 
$$
h= x_1^{\delta _1}\ldots x_n^{\delta_n}\Pi_{1\leq j<i\leq n}[x_i,x_j]^{\nu_{ij}(a_1,\ldots,a_i)}.
$$
the following equality holds
\begin{equation} \label{eq:normal}
    gh = x_1^{\gamma _1+\delta _1}\ldots x_n^{\gamma_n+\delta _n} \Pi_{1\leq j<i\leq n}[x_i,x_j]^{f_{ij}(t_{\tilde x}(g),t_{\tilde x}(h))}.
\end{equation}
where $t_{\tilde x}(g)$ and $t_{\tilde x}(h)$ are the coordinates of elements $g,h \in G$ with respect to the tuple  $\tilde x$. 
\end{prop}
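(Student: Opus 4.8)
The plan is to start from the ``almost normal form'' of $gh$ given by Lemma~\ref{le:P-2} and to run the collecting procedure of Proposition~\ref{nf-b} on its $G'$-factor, verifying along the way that every commutator exponent that appears is the value at $(t_{\tilde x}(g),t_{\tilde x}(h))$ of a fixed expression from $\mathcal{F}$. Concretely: writing $g,h$ as in the statement, equality~(\ref{eq:almost-normal}) holds, and its leading factor $x_1^{\gamma_1+\delta_1}\cdots x_n^{\gamma_n+\delta_n}$ is already the $x$-part of~(\ref{eq:normal}); so it remains to bring the element of $G'$
$$
w\;=\;\overline\Pi\cdot\prod_{1\le j<i\le n}[x_i,x_j]^{a_1^{\delta_1}\cdots a_n^{\delta_n}\beta_{ij}(a_1,\ldots,a_i)+\nu_{ij}(a_1,\ldots,a_i)}
$$
to the collected form~(\ref{eq:collected}). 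As $G'$ is abelian and, by Lemma~\ref{le:P}, $\overline\Pi$ carries the exponent $\frac{(a_i^{\gamma_i}-1)(a_j^{\gamma_j}-1)}{(a_i-1)(a_j-1)}a_{j+1}^{\delta_{j+1}}\cdots a_n^{\delta_n}=\varepsilon_i(\gamma_i)\varepsilon_j(\gamma_j)\,a_{j+1}^{\delta_{j+1}}\cdots a_n^{\delta_n}$ on $[x_i,x_j]$, the commutator $[x_i,x_j]$ enters $w$ with the (not yet collected) exponent
$$
E_{ij}\;=\;\varepsilon_i(\gamma_i)\,\varepsilon_j(\gamma_j)\,a_{j+1}^{\delta_{j+1}}\cdots a_n^{\delta_n}\;+\;a_1^{\delta_1}\cdots a_n^{\delta_n}\,\beta_{ij}(a_1,\ldots,a_i)\;+\;\nu_{ij}(a_1,\ldots,a_i),
$$
so that $w=\prod_{1\le j<i\le n}[x_i,x_j]^{E_{ij}}$.

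The next observation is that $E_{ij}$ is already $\mathcal{F}$-expressible in the coordinates. Each power $a_\ell^{\delta}$ can be written through $\varepsilon_\ell$ and the ring operations (for instance $a_\ell^{\delta}=\varepsilon_\ell(\delta+1)-\varepsilon_\ell(\delta)$, a short computation for every $\delta\in\Z$), so $a_\ell$ itself, and hence every monomial $a_1^{\delta_1}\cdots a_n^{\delta_n}$, lies in $\mathcal{F}$ as a function of $\delta_1,\ldots,\delta_n$. The polynomials $\beta_{ij},\nu_{ij}$ are precisely the last $n(n-1)/2$ entries of $t_{\tilde x}(g)$ and $t_{\tilde x}(h)$, so they enter directly, as the $z$-slots of~(\ref{eq:f-p}). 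Since $\mathcal{F}$ is closed under $+$ and $\cdot$ and~(\ref{eq:f-p}) allows integer (in particular negative) coefficients, for each $j<i$ there is an $f^{0}_{ij}\in\mathcal{F}$ with $E_{ij}=f^{0}_{ij}(t_{\tilde x}(g),t_{\tilde x}(h))$.

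The main part of the argument is then to run the collecting procedure from the proof of Proposition~\ref{nf-b} on $\prod_{j<i}[x_i,x_j]^{E_{ij}}$ and to track $\mathcal{F}$-membership through it. The only rewriting it uses is relation~(\ref{eq:general-Jacoby}) of Lemma~\ref{le:general-Jacobi}: it replaces $[x_i,x_j]^{a_k^{\delta}-1}$ (for $j<i<k$) by a product of $[x_k,x_j]$ and $[x_k,x_i]$ whose exponents are the old exponent multiplied by the $\mathcal{F}$-elements $(a_i-1)\varepsilon_k(\delta)$ and $(1-a_j)\varepsilon_k(\delta)$ (times some monomials, again in $\mathcal{F}$). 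Hence every exponent that ever appears is obtained from the $E_{ij}$ by finitely many additions and multiplications by $\mathcal{F}$-elements, so it too is the value at $(t_{\tilde x}(g),t_{\tilde x}(h))$ of an $\mathcal{F}$-expression. Moreover, as in Proposition~\ref{nf-b}, collecting a monomial $M=M_1a_k^{\delta}M_2$ on $[x_i,x_j]$ with $M_1\in\langle a_1,\ldots,a_i\rangle$ and $k>i$ recurses only on commutators with strictly larger first index and on the strictly shorter tail $M_2$, so the recursion has depth at most $n$ and each level uses only a number of instances of~(\ref{eq:general-Jacoby}) bounded in terms of $n$ alone. Consequently, for every $j<i$ the total exponent of $[x_i,x_j]$ in the collected form of $w$ is a single expression $f_{ij}\in\mathcal{F}$ (depending on $n,i,j$ but not on $X$ or on $g,h$), which may be put in the form~(\ref{eq:f-p}); combined with the first paragraph, this is exactly~(\ref{eq:normal}).

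I expect the main obstacle to be the uniformity-and-finiteness claim in the last paragraph: that the collecting procedure outputs an expression whose ``size'' is bounded in terms of $n$ only, so that $f_{ij}$ is literally one element of the finitary set $\mathcal{F}$. This is a bookkeeping induction running parallel to the proof of Proposition~\ref{nf-b} (and to the uniqueness argument in Proposition~\ref{nf-2}), but one must carry along both the $\mathcal{F}$-membership and the size bound at once; the delicate point is that the input polynomials $\beta_{ij}$ can have arbitrarily large degree and yet their degrees must never enter the shape of $f_{ij}$ — which works precisely because each $\beta_{ij}$ occupies one $z$-slot and is only ever multiplied by $\mathcal{F}$-factors of bounded complexity (and likewise each $\varepsilon_k(\delta)$ is kept whole, not expanded). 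If one wishes to speak of honestly well-defined functions $f_{ij}$, one may first reduce to the case that $X$ is a basis, by pulling $g,h$ back along the canonical epimorphism from the free metabelian group of rank $n$ (where normal forms are unique by Proposition~\ref{nf-2}), deriving~(\ref{eq:normal}) there, and pushing it forward; ``uniform in $X$'' then only needs to be checked in one model.
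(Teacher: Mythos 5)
Your proof takes exactly the same route as the paper's: start from the almost-normal form (\ref{eq:almost-normal}) of Lemma~\ref{le:P-2}, then collect the $G'$-factor by iterating the generalized Jacobi relation (\ref{eq:general-Jacoby}) as in the proof of Proposition~\ref{nf-b}. The paper's own proof is a two-line sketch, and you supply the bookkeeping, correctly isolating the one delicate point --- that the recursion terminates in depth bounded by $n$ with each $\beta_{ij},\nu_{ij}$ kept as a single $z$-slot rather than expanded into monomials, so that $f_{ij}$ is genuinely one fixed element of $\mathcal F$.
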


\begin{proof} We take the product $gh$ in the form (\ref{eq:almost-normal}). The product there that belongs to $G'$ is not in the normal form yet. We apply inductively equation (\ref{eq:general-Jacoby}) from Lemma \ref{le:general-Jacobi} to this product and bring it to the normal form (\ref{eq:normal})
    
\end{proof}

Now we are ready to give a description of a bases of $G$ as a free metabelian group. 

\begin{theorem} \label{th:normal-forms-char}
A set  $Z = \{z_1, \ldots,z_n \} \subset G$ forms a basis of $G$ if and only if every element $g \in G$ has a unique representation as the following product
$$
g = z_1^{\gamma_1} \ldots z_n^{\gamma_n}\Pi_{1\leq j<i\leq n}[z_i,z_j]^{\beta_{ij}({\bar z}_1,\ldots,{\bar z}_i)},
$$
where $\beta_{ij}({\bar z}_1,\ldots,{\bar z}_i) \in \Z[{\bar z}_1,{\bar z}_1^{-1}, \ldots,{\bar z}_i,{\bar z}_i^{-1}].$
\end{theorem}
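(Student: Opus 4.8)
The plan is to prove the two implications separately, and the work is very unevenly distributed between them.

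\textbf{The forward implication} is immediate from the normal-form theory already developed. If $Z=\{z_1,\dots,z_n\}$ is a basis of $G$ as a free metabelian group, then Corollary~\ref{co:normal-forms}, applied with $Z$ in place of $X$ (the whole construction of Section~\ref{subsec:2.5} was carried out for an arbitrary basis, not just a fixed one), says precisely that every $g\in G$ has a unique representation of the asserted form. So there is nothing to do here.

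\textbf{For the converse}, suppose every $g\in G$ admits a (unique) representation $g=z_1^{\gamma_1}\cdots z_n^{\gamma_n}\prod_{1\le j<i\le n}[z_i,z_j]^{\beta_{ij}(\bar z_1,\dots,\bar z_i)}$. First I would note that this already forces $G=\langle z_1,\dots,z_n\rangle$: each factor $z_i^{\gamma_i}$ lies in $\langle Z\rangle$, and each twisted-commutator factor $[z_i,z_j]^{\beta_{ij}(\bar z_1,\dots,\bar z_i)}$ does too, since — writing $\beta_{ij}$ as a $\Z$-linear combination of Laurent monomials and using that $G'$ is abelian — this factor is a product of terms $(w^{-1}[z_i,z_j]w)^{\pm1}$ with $w$ a product of integral powers of $z_1,\dots,z_i$, all of which lie in $\langle Z\rangle$. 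Hence $Z$ generates $G$. Now fix a basis $X=\{x_1,\dots,x_n\}$ of $G$ and let $\phi\colon G\to G$ be the unique homomorphism with $\phi(x_i)=z_i$ for all $i$ (it exists because $G$ is free metabelian on $X$ and the target is metabelian). By the previous step $\phi$ is surjective, and since a free metabelian group of finite rank is finitely generated and residually finite, hence Hopfian, $\phi$ is an automorphism. Therefore $Z=\phi(X)$ is a basis of $G$, as desired.

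I expect the only genuinely delicate point to be the unwinding of the $\Z\bar G$-module notation in the step showing $Z$ generates $G$; everything else is bookkeeping or the appeal to Hopficity. If instead one wishes to make essential use of the uniqueness hypothesis rather than of the Hopfian property, injectivity of $\phi$ can be obtained directly: once $Z$ is known to generate $G$, the images $\bar z_1,\dots,\bar z_n$ form a $\Z$-basis of $\bar G\cong\Z^n$, so $\Z[\bar z_1^{\pm1},\dots,\bar z_n^{\pm1}]\cong\Z\bar G$; applying $\phi$ to the $X$-normal form of any $g$ with $\phi(g)=1$ produces a representation of $1$ of the asserted form, which by uniqueness must be the trivial one, whence $g=1$.
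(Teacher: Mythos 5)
Your proof is correct, but it takes a genuinely different route from the paper's. For the converse implication the paper constructs the coordinate-matching map $\mu\colon x^\gamma c_x^\beta \mapsto z^\gamma c_z^\beta$; this is a bijection because both the $X$-normal forms (Corollary~\ref{co:normal-forms}) and the hypothesized $Z$-representations are unique, and it is a homomorphism because Proposition~\ref{pr:mult-normal-forms} says that the coordinates of a product are computed by the very same functions $f_{ij}\in\mathcal F$ regardless of which ordered $n$-tuple plays the role of $\tilde x$. So $\mu$ is an automorphism carrying $X$ to $Z$, with no appeal to Hopficity. You instead define $\phi$ on generators via the universal property (so $\phi$ is a homomorphism for free), derive surjectivity from the observation that $Z$ generates $G$, and obtain injectivity either from residual finiteness/Hopficity of $G$ or directly from the uniqueness hypothesis (where, as you correctly note, one should first record that $\bar z_1,\dots,\bar z_n$ is a $\Z$-basis of $\bar G$, so that $\Z[\bar z_1^{\pm1},\dots,\bar z_i^{\pm1}]$ is a genuine Laurent polynomial ring and $\beta_{ij}(\bar z_1,\dots,\bar z_i)=0$ forces $\beta_{ij}=0$). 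What your Hopficity variant buys that the paper's proof does not: it makes visible that mere \emph{existence} of the $Z$-representations already forces $Z$ to be a basis, so the uniqueness clause in the converse direction is automatic. What the paper's route buys: it stays entirely inside the normal-form calculus built for the bi-interpretation, avoids the external input of residual finiteness, and is the argument that survives in the non-standard models $H\equiv G$ of Section~4, where $H$ is not finitely generated and Hopficity is unavailable but the uniform multiplication functions $f_{ij}$ still govern the coordinates.
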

\begin{proof} Let $X = \{x_1, \ldots,x_n\}$ be a basis of $G$ as a free metabelian group. Then a map $x_1 \to z_1, \ldots, x_n \to z_n$ extends to a bijection 
$$
\mu:  x_1^{\gamma_1} \ldots x_n^{\gamma_n} \Pi_{1\leq j<i\leq n}[x_i,x_j]^{\beta_{ij}(a_1,\ldots,a_i)}  \to z_1^{\gamma_1} \ldots z_n^{\gamma_n}\Pi_{1\leq j<i\leq n}[z_i,z_j]^{\beta_{ij}({\bar z}_1,\ldots,{\bar z}_i)},
$$
on $G$. This bijection is a homomorphism since by Proposition \ref{pr:mult-normal-forms} the multiplication in $G$ given by the same functions $f_i \in \mathcal F$ in terms of coordinates $t_{\tilde x}$ and $t_{\tilde z}$.
\end{proof}

The following statement follows from the normal forms of elements in $G'$.
\begin{prop}\label{nf} \cite[Proposition 4.4]{KMS} The group $G'$ is a free module over $\mathbb Z[a_1,a_1^{-1},a_2, a_2^{-1}]$ with the basis $\{[x_i,x_j]^{a_3^{\delta _3}\ldots a_{j}^{\delta _j}}\}$ for all $1\leq i<j\leq n$, $\delta _3,\ldots,\delta _j\in\Z^{j-2}.$ 
\end{prop}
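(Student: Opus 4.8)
The plan is to derive Proposition~\ref{nf} directly from the uniqueness of collected normal forms (Proposition~\ref{nf-2}), together with one standard fact about Laurent polynomial rings: writing $A = \Z[a_1^{\pm1},\ldots,a_n^{\pm1}]$ and $A_2 = \Z[a_1^{\pm1},a_2^{\pm1}]$, the ring $A$ is a free $A_2$-module whose basis is the set of monomials $M_\delta := a_3^{\delta_3}\cdots a_n^{\delta_n}$, $\delta = (\delta_3,\ldots,\delta_n)\in\Z^{n-2}$. Throughout I use the index convention of Proposition~\ref{nf-2}, where for a commutator $[x_i,x_j]$ with $j<i$ the exponent is a Laurent polynomial in $a_1,\ldots,a_i$; the assertion of Proposition~\ref{nf} is the same statement after relabeling the larger index as $j$, so the ``extra'' variables attached to $[x_i,x_j]$ are $a_3,\ldots,a_i$ (equivalently $a_3,\ldots,a_j$ in the statement's notation), and when the larger index is $2$ the attached $\delta$-tuple is empty.

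First I would check that the proposed set spans $G'$ as an $A_2$-module. Since $X$ is a basis of $G$, the images $a_1,\ldots,a_n$ form a basis of $\bar G$, so $\Z\bar G\cong A$ and $G'$ is an $A$-module, hence an $A_2$-module by restriction of scalars. Given $u\in G'$, Proposition~\ref{nf-2} writes it uniquely as $u = \prod_{1\le j<i\le n}[x_i,x_j]^{\beta_{ij}(a_1,\ldots,a_i)}$. Expanding each $\beta_{ij}$ along the monomials in $a_3,\ldots,a_i$ gives $\beta_{ij} = \sum_{\delta} c_{ij,\delta}(a_1,a_2)\,a_3^{\delta_3}\cdots a_i^{\delta_i}$ with finitely many nonzero $c_{ij,\delta}\in A_2$; since $G'$ is abelian, its module structure is additive in the exponent, so $[x_i,x_j]^{\beta_{ij}} = \prod_\delta \big([x_i,x_j]^{a_3^{\delta_3}\cdots a_i^{\delta_i}}\big)^{c_{ij,\delta}(a_1,a_2)}$, which exhibits $u$ as an $A_2$-combination of the proposed generators.

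Next I would prove $A_2$-linear independence. Suppose a relation $\prod_{i,j,\delta}[x_i,x_j]^{c_{ij,\delta}(a_1,a_2)\,a_3^{\delta_3}\cdots a_i^{\delta_i}} = 1$ holds with $c_{ij,\delta}\in A_2$. Collecting, for each pair $j<i$, the total exponent of $[x_i,x_j]$ yields $\prod_{1\le j<i\le n}[x_i,x_j]^{\beta_{ij}}=1$ with $\beta_{ij} = \sum_\delta c_{ij,\delta}(a_1,a_2)\,a_3^{\delta_3}\cdots a_i^{\delta_i}\in\Z[a_1^{\pm1},\ldots,a_i^{\pm1}]$. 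By the uniqueness clause of Proposition~\ref{nf-2}, $\beta_{ij}=0$ for every $j<i$. Because the monomials $a_3^{\delta_3}\cdots a_i^{\delta_i}$ ($\delta\in\Z^{i-2}$) form a subset of the $A_2$-basis $\{M_\delta\}$ of $A$ (those with $\delta_{i+1}=\cdots=\delta_n=0$), they are $A_2$-linearly independent, so $c_{ij,\delta}=0$ for all $\delta$. Together with the spanning step, this shows the proposed set is a free $A_2$-basis of $G'$.

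The only point requiring care is the bookkeeping that passes between the multiplicative commutator notation and the additive module structure, and that reconciles the index conventions of Proposition~\ref{nf-2} with those in the statement; I do not expect a genuine obstacle, since the whole argument reduces to the already-established uniqueness of collected forms and an elementary property of Laurent polynomial rings.
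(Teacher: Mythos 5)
Your proof is correct and matches the approach the paper points to: the paper introduces Proposition~\ref{nf} with the remark that it ``follows from the normal forms of elements in $G'$,'' and your derivation from the uniqueness of collected forms in Proposition~\ref{nf-2}, together with the standard fact that the monomials $a_3^{\delta_3}\cdots a_j^{\delta_j}$ form a free $\Z[a_1^{\pm1},a_2^{\pm1}]$-basis of $\Z[a_1^{\pm1},\ldots,a_j^{\pm1}]$, is exactly that argument. The paper itself only cites \cite{KMS} rather than supplying details, and your handling of the reversed index convention is accurate.
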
 

Now we describe briefly the method of Fox derivatives, that we use in the sequent. Let $F_n$ be the free group of rank $n$ with the basis $\{z_1,\ldots ,z_n\},$ let $\pi:F_n\rightarrow\bar g$.  A partial {\em Fox derivative} associated with $z_i$ is the linear map $D_i:\mathbb Z (F_n)\rightarrow \mathbb Z (F_n)$ satisfying the  conditions   
$$D_i(z_i)=1, D_i(z_j)=0, i\neq j$$
and $$D_i(uv)=D_i(u)+uD_i(v) $$ for all $u,v\in F_n$.
The main identity is $D_1(w)(z_1-1)+\ldots +D_n(w)(z_n-1)=w-1.$
$D_i$ induces a linear map  $d_i:\Z G\rightarrow \Z \bar G.$ We briefly explain the details. One can compute
$$D_i([u^{-1},v^{-1}])= (1-uvu^{-1})D_i(u^{-1})+(u-uvu^{-1}v^{-1})D_i(v)$$ for all $u,v\in F_n.$ It follows that for $\pi ':\Z F_n\rightarrow \Z \bar G$,  for all $w\in {F_n}''$, $w\in\ker\pi '$. Hence $D_i$ induces a linear map $d_i:\Z G\rightarrow \Z \bar G$ (that we will also call Fox derivative).

From the definition we have $$d_i(x_i)=1,\ d_i(x_j)=0, i\neq j,$$
$$d_i(uv)=d_i(u)+(u\pi)d_i(v)$$ for all $u,v\in G$. The main identity is $d_1(w)(a_1-1)+\ldots +d_n(w)(a_n-1)=w\pi -1,$ where $w$ is an arbitrary element of $G$.

It can be verified that for $w\in G'$ and $a\in\bar G$, $d_i(w^{a})=a^{-1}d_i(w).$  Also note that for
$w\in G'$ and $u\in G$ we have
$$d_i(wu)=d_i(w)+d_i(u).$$
If $a=\alpha _1a_1+\ldots +\alpha _ka_k\in\Z\bar G$ with $\alpha _i\in \Z, a_i\in\bar G$, we denote by $a_{inv}=\alpha _1{a_1}^{-1}+\ldots +\alpha _k{a_k}^{-1}.$ Then
$$d_i(w^{a})=a_{inv}d_i(w).$$ For $u\in G, \alpha\in\Z$, $$d_i(u^{\alpha})=\frac{u^{\alpha}-1}{u-1}d_i(u).$$

\subsection  {$\Z$ is absolutely interpretable in  $G $}\label{se:3.2}
 
Let $A$ and $B$ be abelian groups, and let $f:A\times A\to B$ be a bilinear map between them. We associate with such $f$ a two-sorted structure $(A,B;f)$ (here $A$ and $B$ are groups, and $f$ is the predicate for the graph of $f$). The map $f$ is said to be \emph{non-degenerate} if for $a \in A$ $f(a,A)=0$ if and only if $a=0$, and similarly, $f(A,a) = 0$ implies $a =0$.  The map $f$ is called \emph{full} if  $B$ is generated by $f(A,A)$. An associative commutative unitary ring  $R$  is a \emph{ring of scalars}  of $f$  if there exist faithful actions of $R$ on $A$ and $B$, which turn $A$ and $B$ into $R$-modules and  such that $f$ is  $R$-bilinear with respect to these actions. There is a canonical embedding of $R$ into  $End(A)$.  $R$ is termed the \emph{largest} ring of scalars of $f$ if for any other ring of scalars $R'$ of $f$, one has $R'\leq R$ when viewed as subrings of $End(A)$. If $f$ is full and non-degenerate then the maximal  ring of scalars of $f$ exists and it is unique \cite{Myasnikov1990}; we denote it  by $R(f)$. 
Moreover, it was shown in  \cite{Myasnikov1990} that if $f:A\times A\to B$ is a full non-degenerate bilinear map between finitely generated abelian groups $A$ and $B$ then the largest ring of scalars $R(f)$ of $f$  and its actions on $A$ and $B$ are absolutely interpretable in the structure $(A,B;f)$. Here, we say that the actions of $R(f)$ on $A$ and  on  $B$ are absolutely interpretable in $(A,B;f)$ if the two-sorted structures $(A,R(f);s_A(x,y,z))$ and  $(B,R(f);s_B(x,y,z))$, where $A,B$ are groups, $R(f)$ is a ring, and $s_A(x,y,z)$ and $s_B(x,y,z)$ are predicates that define the scalar multiplications of $R(f)$ on $A$ and $B$, respectively, are absolutely interpretable in  $(A,B;f)$. We use these facts to describe an absolute interpretation of $\Z$ in $G$.

Note that every verbal subgroup of $G$ has finite verbal width \cite{Rom}, hence it is absolutely definable in $G$ (see, for example, \cite{GMO1}). It follows that all the terms of the lower central series of $G$, in particular, the commutant $G'$ and $G_3 = [G,G']$ are absolutely definable in $G$. 
Hence the free nilpotent group $G/G_3$ is absolutely interpretable in $G$, as well as  the bilinear map
\begin{equation}\label{e: bilinear_map_of_2_nilp}
f_{G}: G/G' \times G/G' \to G'/G_3, 
\end{equation}
the commutation in $G/G_3$,  defined by $(xG',  yG') \mapsto [x,y]$. The map $f_{G}$ is non-degenerate and full, while the abelian groups $G/G'$ and $G'/G_3$ are finitely generated,  hence by the result mentioned above there is a largest ring of scalars $R(f_{G})$ of $f_{G}$,  such that $R(f_G)$, and its actions on $G/G'$ and $G'/G_3$ are absolutely definable in the structure $(G/G',G'/G_3; f_G)$, hence in $G$.  To get an  absolute interpretation of $\Z$ in $G$ it suffices to note that,  as was shown in \cite{GMO1} and earlier by other means in  \cite{MS}, $R(f_{G}) \simeq \Z$.  Thus, $\Z$ and its actions on $G/G'$ and $G'/G_3$ are absolutely interpretable in $G$.  We denote this interpretation of $\Z$ in $G$ by $\Z^*$. We remark that this interpretation of $\Z$ in $G$ is never injective, because the interpretation of the group $G/G_3$ in $G$ is based on  a  non-trivial equivalence relation $mod G_3$ in $G$.  We will describe in Section \ref{subsec:3.5} below another, not absolute, but regular  interpretation of $\Z$ in $G$ which is injective.

Now, we may use in our formulas  expressions of the type $y = x^m \ mod \ G'$ for $x,y \in G \smallsetminus G'$, as well as  $p^m = q \ mod G_3 $ for $p, q \in G'$,  and $m \in \Z$, viewing them as notation for the corresponding formulas of group theory language which are coming from the interpretations of $\Z^*$ and its actions on $G/G'$ and $G'/G_3$.  More precisely, the interpretation $\Z^*$ is given by a definable in $G$ subset $U^* \subseteq G^k$  together  with a definable in $G$ equivalence relation $\sim$ on $U^*$  and formulas $\psi_+(\bar x, \bar y,\bar z), \psi_\circ(\bar x, \bar y,\bar z)$ with $k$-tuples of variables $\bar x, \bar y, \bar z$, which  define binary operations on the factor-set $U^*/\sim$ (denoted by $+$ and $\circ$) and the structure $\langle U^*/\sim; +, \circ\rangle$ is a ring, that is  isomorphic to $\Z$. For $m \in \Z$ by $m^*$ we denote the image of $m$ in $\Z^*$ under the isomorphism $\Z \to \Z^*$. Furthermore, as we mentioned, the exponentiation by $\Z^*$ on $G/G'$ and on $G'/G_3$ is also 0-interpretable, which means that there are formulas in the group language, say $expnil_1(u,v,\bar x)$ and $expnil_2(u,v,\bar x)$, such that  for $g,h \in G$ and $m \in \Z$ one has $g^m = h  (mod \ G')$ if and only if $expnil_1(g,h,m^*)$ holds in $G$ and also for elements $p, q \in G'$  $ p^m  = q (mod \ G_3)$ if and only if $expnil_2(p,q,m^*)$ holds in $G$.

\subsection{Interpretation of $\Z$-exponentiation in $G$} 
\label{se:3.4}

Now, in the notation above,  we construct a formula $exp(u,v,\bar x)$ of the group language, where $\bar x$ is a $k$-tuple of variables, such that for $g,h \in G$ and $m \in \Z$ the following holds
$$
g = h^m \Longleftrightarrow G  \models exp(g,h,m^*).
$$
To construct the formula $exp(u,v,\bar x)$ we consider two cases,  for each of them build the corresponding formula $exp_i(u,v,\bar x)$, and then use them to build $exp(u,v,\bar x)$.

\medskip \noindent
 Case 1. Let $g \in G\smallsetminus G'$.  In Section \ref{se:3.2} we described  a formula  $expnil_1(u,v,\bar x)$ of group language such that   for $g,h \in G$ and $m  \in \Z$  one has 
$$
g^m = h (mod \ G') \Longleftrightarrow  G \models \ expnil_1(g,h,m^*).
$$ 
Now put
$$
exp_1(u,v,\bar x) =  ([u,v] = 1 \wedge expnil_1(u,v,\bar x)).
$$
Then the formula $exp_1(g,h,m^*)$ holds in $G$ on elements $g,h \in G$ and $m^* \in \Z^*$ if and only if $h = g^m (mod \ G') $ and $h \in C_G(g)$. Since the centralizer $C_G(g)$ is cyclic there is only one such $h$  and in this case $h = g^m$.  

\medskip \noindent
 Case 2. Let $1 \neq g \in G'$.  Then as was shown in Section \ref{se:3.1} for any $w \in G \smallsetminus G'$  and every $m \in \Z$ there exists $c \in G'$ such that the following equality holds  
\begin{equation} \label{eq:w-g-m-c}
    (wg)^m  = w^mg^m[c,w].
\end{equation}

Consider the following condition on elements $g,h \in G', m \in \Z$:
\begin{equation} \label{eq:exp-2}
 C_2(g,h,m) =    \forall w (w \in G\smallsetminus G' \rightarrow  \exists c (c \in G' \wedge ((wg)^m  = w^mh[c,w])).
\end{equation}
The equation (\ref{eq:w-g-m-c}) shows that $h = g^m$  satisfies $C_2(g,h,m)$. 
We claim that $h = g^m$ is the only element in $G'$ that satisfies $C_2(g,h,m)$ in $G$.   Indeed, suppose  $C_2(g,h,m)$ holds in $G$ for some $h \in G'$. Then for  any $w \in G\smallsetminus G'$ there exists $c_1 \in G'$ such that 
$$
(wg)^m  = w^mh[c_1,w].
$$
Then $w^mg^m[c,w] = w^mh[c_1,w]$, so 
$$
 h^{-1}g^m= [c,w][c_1,w]^{-1} = [c,w][c_1^{-1},w]  = [cc_1^{-1},w].
$$
Now by Lemma \ref{le:identity}  one gets  $h^{-1}g^m = 1$, so $h = g^m$, as claimed. 
To finish the proof it suffices to show that the condition $C_2(g,h,m)$ can be defined by some  formula $exp_2(u,v,\bar x)$  of group theory in $G$. Note that in $C_2(g,h,m)$ the elements $w$ and $wg$ are in $G \smallsetminus G'$ hence we can use the formula $exp_1(u,v,\bar x) $  to write down the equality $(wg)^m  = w^muh[c,w]$, and then the whole formula 
$exp_2(u,v,\bar x)$. 

Finally, the formula 
$$
\exp(u,v,\bar x) = (u  \notin G' \to exp_1(u,v,\bar x))\wedge (u \in G' \to exp_2(u,v,\bar x))
$$
defines $\Z$-exponentiation on the whole group $G$.

\subsection{Regular injective interpretation of $\Z$ in $G$}
\label{subsec:3.5}

Let  $\exp(u,v,\bar x)$ be the formula from Section \ref{se:3.4}.   Then  for every $g \neq 1$  formula $\exp(g,v,\bar x)$ from Section \ref{se:3.4}  defines a bijection $\lambda_g : \Z^* \to \langle g\rangle$ defined by $m \to g^m$, $m \in \Z^*$.  This bijection allows one to transfer the operations of addition $+$ and multiplication $\circ$ in the ring $\Z^*$ defined in $G$ by the formulas  $\psi_+(\bar x, \bar y,\bar z)$ and $\psi_\circ(\bar x, \bar y,\bar z)$ (see Section \ref{se:3.2}) from  the set $\Z^*$  to the set $\langle g\rangle$.  The resulting definable operations $+_g$ and $\circ_g$ give an interpretation $\Z^*_g = \langle \langle g \rangle; +_g,\circ_g\rangle$ of the ring $\Z$ on the cyclic subgroup $\langle g\rangle$ in $G$ with the coordinate map defined by $\mu_g : g^m \to m$. This interpretation is uniform in $g$, i.e., it has the same formulas for every $1\neq g \in G$, therefore, since the condition $g \neq 1$ is definable in $G$, the interpretations $\Z^*_g$ give a regular interpretation of $\Z$ in $G$, which is injective by construction. 

\subsection{Absolute and regular interpretations of $\Z[a_1,a_1^{-1}, \ldots,a_n,a_n^{-1}]$ in $G$ }
\label{subsec:3.6}

We proved above that the  ring $\Z$ is absolutely interpretable in $G$ as $\Z^*$ (Section \ref{se:3.2}) and also it is regularly injectively interpretable in $G$ via the interpretations $\Z_g^*$, $1 \neq  g \in G$ (Section \ref{subsec:3.5}). By transitivity of interpretations to show that the ring $R = \Z[a_1, a_1^{-1}, \ldots, a_m,a_m^{-1}]$ is absolutely (or regularly injectively) interpretable in  $G$ it suffices to show that $R$  is absolutely interpretable in $\Z$ and then compose this interpretation with the interpretations $\Z^*$ and $\Z_g^*$.

To see that $R$ is absolutely interpretable in $\Z$ note that $R$ is computable (since the word problem in $R$ is decidable), hence there exists a computable  injective function  $\nu: R \to \N $ such that the set $\nu(R)$ is computable in $\N$ and the images under the map $\nu$ of the ring operations of the ring $R$ are computable in $\N$, i.e., the following operations on $\nu(R)$ (here $Q_i \in R$, $i = 1,2,3$) are computable in $\N$:
$$
k_1 \oplus k_2 = k_3 \Longleftrightarrow \wedge_{i = 1}^3 (k_i = \nu(Q_i)) \wedge (Q_1+Q_2 = Q_3),
$$
$$
k_1 \odot k_2 = k_3 \Longleftrightarrow \wedge_{i = 1}^3 (k_i = \nu(Q_i)) \wedge (Q_1\cdot Q_2 = Q_3).
$$
 This is a general fact on computable algebraic structures. Nevertheless, it is convenient to give a sketch of a particular such enumeration $\nu:R \to \N$. Every polynomial $P \in \Z[a_1, \ldots,a_n]$ can be uniquely presented as an integer linear combination of pair-wise distinct monomials on commuting variables $a_1, \ldots,a_n$:
$$
P = \Sigma_{i=1}^d \gamma_i a_1^{\alpha_{i1} }\ldots a_n^{\alpha_{in}},
$$
where $0\neq \gamma_i \in \Z$ and $\alpha_i \in \N$. 
Hence, the polynomial $P$ is uniquely presented by a tuple
\begin{equation} \label{eq:nu(P)}
   u  = (\gamma_1,\alpha_{11}, \ldots\alpha_{1n}, \ldots , \gamma_d,\alpha_{d1}, \ldots, \alpha_{dn})
\end{equation}
If $Q \in R$ then $Q$ can be uniquely presented in the form $Q = \frac{P}{{\bar a}^{\bar \beta}}$ for some $P \in \Z[a_1, \ldots,a_n]$ and some monomial ${\bar a}^{\bar \beta} = a_1^{\beta_1}\ldots a_n^{\beta_n},  \beta_i \in \N$, such that $gcd(P,{\bar a}^{\bar \beta}) = 1$. It follows that $Q$ can be uniquely presented by a pair of tuples $(u_Q,v_Q)$, where $u_Q = u, v_Q = \bar \beta$. Fix an arbitrary computable bijection 
$$
\tau:\bigcup_{i \in \N} \Z^i \to \N
$$
which enumerates all finite tuples of integers. Then $Q$ is uniquely presented by the pair $(\tau(u_Q),\tau(v_Q)) \in \N^2$ and the set of all such pairs is a computable subset of $\N^2$. For a fixed computable bijection $\tau_2:\N^2 \to \N$  put
$$\nu(Q) = \tau_2((\tau(u_Q),\tau(v_Q))).$$
By construction, the subset $\nu(R)$ is computable in $\N$ and  given a number $k \in \nu(R)$ one can algorithmically find the corresponding  Laurent polynomial $Q$ such that $k = \nu(Q)$. Then it is easy to see that the operations $\oplus$ and $\odot$ on $\nu(R)$ are computable in $\N$. To finish the proof it suffices to note that all the computable operations or predicates on $\N$ are definable in $\N$, and $\N$ is definable in $\Z$ (every non-negative integer is a sum of squares of four integers). This shows that $R$ is absolutely interpretable in $\Z$. Hence it is absolutely interpretable in $G$ via $\Z^*$ we denote this interpretation by $R^*$. For $Q \in R$ by $\nu^*(Q) \in R^*$ we denote the image of $\nu(Q)$ under the isomorphism $\Z \to \Z^*$.  Similarly, $R$ is regularly injectively interpretable in $G$ via the interpretations $\Z_g^*, g \neq 1$, which we denote by $R_g^*$. The image of $\nu(Q)$ in $R^*_g$ under the isomorphism $\Z \to \Z^*_g$ we denote by $\nu^*_g(Q)$. Finally, we need to mention the coordinate maps of these interpretations. Let $\bar a = (a_1, \ldots,a_n)$ be an arbitrary fixed  basis of $G$ then the map $\mu_{\bar a}: R^* \to R$ that maps $\nu^*(Q) \in R^* \to Q \in \Z[a_1,a_1^{-1}, \ldots, a_n,a_n^{-1}]$ is the coordinate map for the interpretation $R^*$, and the map $\mu_{\bar a}: R^*_g \to R$ that maps $\nu^*_g(Q) \in R^*_g \to Q \in \Z[a_1,a_1^{-1}, \ldots, a_n,a_n^{-1}]$  is the coordinate map of $R_g^*$.

\subsection{Interpretation of $\Z \bar G$-module $G'$ in $G$}\label{2.4}

In this section for a fixed basis $\bar a = (a_1, \ldots,a_n)$ of $G$ we interpret in $G$ the action of the ring  $R = \Z[a_1,a_1^{-1}, \ldots, a_n,a_n^{-1}]$ on $G'$. More precisely, we describe an interpretation of the $R$-module $G'$ (viewed as a two-sorted structure $G'_R =(G',R;s)$ where  $G'$ is an abelian group, $R$ is a ring, and $s$ is the predicate for scalar multiplication or $R$ on $G'$, see Section \ref{se:3.2}) in $G$ with parameters $\bar a$. In fact, we  interpret $G'_R$ in $G$ as $(G'_R)^* =(G',R^*;s^*)$, where $R^*$ is the interpretation of $R$ in $G$ from Section \ref{subsec:3.6}, and $s^*$ is the  predicate for  the action of $R^*$ on $G'$. We need parameters $\bar a$ to interpret $s^*$. This interpretation is uniform in $\bar a$ (the same formulas work for other bases $\bar b$ of $G$).  A similar  argument gives an injective interpretation of the module $G'_R$ in $G$ as $ (G',R_g^*;s^*)$ in $G$ with parameters $\bar a$ and $g$.

To interpret the module $G'_R =(G',R;s)$  in $G$ for a given basis $\bar a$ of $G$ we interpret $G'$ as $G'$ (which is a definable subgroup of $G$) and the ring $R$ by $R^*$, so  it suffices to show  how to interpret the predicate $s$ in $G$. We need two  preliminary results.

 For a tuple $\bar \alpha  = (\alpha_1, \ldots,\alpha_m) \in \Z^m$, $m\leq n$, denote by $\lambda_{\bar \alpha}$ the homomorphism $\lambda_{\bar \alpha}: \Z[a_1, \ldots,a_n] \to \Z[a_{m+1},\ldots ,a_n]$ such that $a_i \to \alpha_i, i = 1, \ldots,m$.  The kernel $I_{\bar \alpha}$ of $\lambda_{\bar \alpha}$ is the  ideal generated in  $\Z[a_1, \ldots,a_n] $  by  $\{a_1- \alpha_1, \ldots, a_m -\alpha_m\}$.
Notice, that for every polynomial $P = P(a_1, \ldots,a_m) \in \Z[a_1, \ldots,a_n]$ one has $\lambda_{\bar \alpha}(P) = P(\alpha_1, \ldots,\alpha_m)$, so 
$$
P(a_1, \ldots,a_m) = P(\alpha_1, \ldots,\alpha_m)  + \Sigma_{i=1}^m(a_i-\alpha_i)f_i,
$$
for some $f_i \in \Z[a_1, \ldots,a_n]$.

Let $A$ and $B$ be rings and $\Lambda$ a set of homomorphisms from $A$ into $B$.  Recall that $A$ is discriminated into $B$ by a set $\Lambda$ if for any finite subset $A_0 \subseteq A$ there is a homomorphism $\lambda \in \Lambda$ which is injective on $A_0$.  

The following result is known, but we need the proof itself. 

\medskip \noindent
\begin{claim} The ring $\Z[a_1, \ldots,a_n]$ is discriminated into $\Z$ by the set of homomorphisms $\{\lambda_{\bar \alpha} \mid \bar \alpha   \in \Z^n\}$.  
\end{claim}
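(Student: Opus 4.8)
The plan is to reduce the statement to a classical fact about polynomials over $\Z$: a nonzero polynomial in finitely many variables has a non-root in $\Z^n$, and more, one can find a single point at which a prescribed finite family of nonzero polynomials is simultaneously nonzero. Concretely, given a finite subset $S = \{P_1, \ldots, P_r\} \subseteq \Z[a_1, \ldots, a_n]$, I want to produce $\bar\alpha \in \Z^n$ such that $\lambda_{\bar\alpha}$ is injective on $S$; this is equivalent to asking that $\lambda_{\bar\alpha}(P_i - P_j) \neq 0$ for all $i \neq j$, i.e. that $\bar\alpha$ avoids the zero sets of the finitely many nonzero polynomials $P_i - P_j$. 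So it suffices to prove: for any nonzero $P \in \Z[a_1, \ldots, a_n]$ there exists $\bar\alpha \in \Z^n$ with $P(\bar\alpha) \neq 0$, and then to pass from "one polynomial" to "finitely many" by taking the product $\prod_{i<j}(P_i - P_j)$, which is nonzero since $\Z[a_1,\ldots,a_n]$ is an integral domain, and choosing $\bar\alpha$ not a root of that product.

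For the single-polynomial claim I would argue by induction on $n$. For $n = 1$, a nonzero polynomial in $\Z[a_1]$ has only finitely many roots, so almost every integer works. For the inductive step, write $P = P(a_1, \ldots, a_n) = \sum_{k} Q_k(a_1, \ldots, a_{n-1}) a_n^k$ with not all $Q_k$ zero; pick some $Q_k \neq 0$, apply the induction hypothesis to get $\bar\alpha' = (\alpha_1, \ldots, \alpha_{n-1}) \in \Z^{n-1}$ with $Q_k(\bar\alpha') \neq 0$, and then $P(\bar\alpha', a_n)$ is a nonzero polynomial in the single variable $a_n$ over $\Z$, which again has only finitely many integer roots, so some $\alpha_n$ completes the tuple. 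Combining with the product trick above, this gives $\bar\alpha \in \Z^n$ on which $\lambda_{\bar\alpha}$ is injective on $S$, which is exactly the discrimination statement. (Alternatively one could invoke the stronger fact that an infinite integral domain is not a finite union of proper affine subvarieties, but the elementary induction is cleanest and is presumably what the later module-interpretation argument will want to quote verbatim.)

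I do not expect a genuine obstacle here — the result is standard — but the one point deserving care is the reduction "injective on $A_0$" $\iff$ "nonzero on all pairwise differences." Since $\lambda_{\bar\alpha}$ is a ring homomorphism, $\lambda_{\bar\alpha}(P_i) = \lambda_{\bar\alpha}(P_j)$ iff $\lambda_{\bar\alpha}(P_i - P_j) = 0$, so injectivity on the finite set $A_0 = \{P_1,\ldots,P_r\}$ is precisely the conjunction $\bigwedge_{i \neq j} \lambda_{\bar\alpha}(P_i - P_j) \neq 0$; each $P_i - P_j$ with $i \neq j$ is a nonzero element of the domain $\Z[a_1,\ldots,a_n]$, so their product is nonzero and the single-polynomial result applies to it. The only other thing worth flagging is that the proof is effective: the $\alpha_i$ can be chosen from an explicitly bounded range (larger than the degree bounds of the polynomials involved), which is what makes the downstream interpretation of the $\Z\bar G$-module $G'$ work by uniform formulas — this matches the way $\lambda_N$ was used in the proof of Proposition \ref{nf-2}.
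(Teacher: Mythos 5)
Your proof is essentially the same as the paper's: you reduce discrimination to separation by taking the product of pairwise differences over the integral domain, and then prove separation by induction on $n$, specializing $a_1,\ldots,a_{n-1}$ first and then choosing $\alpha_n$ to avoid the finitely many roots of the resulting one-variable polynomial. The only cosmetic difference is that the paper specializes so that the \emph{leading} coefficient $Q_m$ survives (which makes the resulting polynomial in $a_n$ automatically nonzero), while you allow any nonzero coefficient $Q_k$; both variants are correct.
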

\begin{proof} Since $\Z[a_1, \ldots,a_n]$  is an integral domain it suffices to show $\Lambda$ separates  $\Z[a_1, \ldots,a_n]$ into $\Z$, i.e., for every non-zero polynomial $Q \in \Z[a_1, \ldots,a_n]$ there exists $\lambda \in \Lambda$ such that $\lambda(Q) \neq 0$. Indeed, let $A_0 = \{P_1, \ldots, P_t\}$ with $P_i \neq P_j$ for $1 \leq j < i \leq t$. Put $Q_{ij} = P_i  - P_j$ and $Q  = \Pi _{1\leq j <i \leq t}Q_{ij}$. Then $Q \neq 0$. If for some $\lambda \in \Lambda$ $\lambda(Q) \neq 0$ then $\lambda$ is injective on $A_0$. 

 Now we prove by  induction on $n$ that $\Lambda$ separates  $\Z[a_1, \ldots,a_n]$ into $Z$.  If $P \in \Z[a_1]$ then $\lambda_{\alpha_1}$ for each sufficiently large $\alpha_1$ separates $P$ into $\Z$.   If $P \in \Z[a_1, \ldots,a_n]$ then for some $m \in \N$ 
 $$
 P = Q_m a_n^m + Q_{m-1}a_n^{m-1} + \ldots +Q_1a_n + Q_0,$$
where $Q_i \in \Z[a_1, \ldots,a_{n-1}]$ and $Q_m \neq 0$. By induction there is $\bar \beta  =(\beta_1, \ldots,\beta_{n-1}) \in \Z^{n-1}$ such that the homomorphism $\lambda_{\bar \beta}$ discriminates $Q_m$ into $\Z$. Then 
$$
\lambda_{\bar \beta}(Q_m)a_n^m + \lambda_{\bar \beta}(Q_{m-1})a_n^{m-1} + \ldots +\lambda_{\bar \beta}(Q_1)a_n + \lambda_{\bar \beta}(Q_0)
$$ 
is a non-zero polynomial in $\Z[a_n]$. Now one  can separate this polynomial into $\Z$ by sending $a_n$ to a large enough integer $\alpha_n$, as above.   This proves the claim.

\end{proof}

For $\bar \alpha = (\alpha_1, \ldots,\alpha_m) \in \Z^m$ denote by $(G')^{I_{\bar \alpha}}$ the submodule of the module  $G'$ obtained from $G'$ by the action of the ideal $I_{\bar \alpha}$.  $(G')^{I_{\bar \alpha}}$ is  an abelian subgroup of $G$  generated by the  set  $\{g^Q \mid g \in G', Q \in I_{\bar \alpha}\}$, hence by the   set $\{g^{a_i - \alpha_i} \mid g \in G', i = 1, \ldots, m\}$.

\medskip \noindent
\begin{claim} For any basis $(a_1, \ldots,a_n)$ of $G$ and any tuple $(\alpha_1, \ldots,\alpha_m) \in \Z^m$ the  subgroup $(G')^{I_{\bar \alpha}} \leq G'$ is definable in $G$ uniformly in $(a_1, \ldots,a_n)$ and $(\alpha_1, \ldots,\alpha_m)$.  More precisely, let $\Z^*$ be 0-interpretation of $\Z$ in $G$ from Section \ref{se:3.2}. Then there is a formula $\varphi(y,y_1,\ldots,y_n,\bar z_1, \ldots, \bar z_m)$ of group theory such that for any basis $(a_1, \ldots,a_n)$ of $G$ and any tuple $(\bar k_1,  \ldots, \bar k_m) \in (\Z^*)^n$  the formula 
$\varphi(y,x_1,\ldots,x_n,\bar k_1, \ldots, \bar k_m)$ defines in $G$ the subgroup $(G')^{I_{\bar \alpha}}$, where $\alpha_i = \bar k_i \in \Z^*, i = 1, \ldots,m$.
\end{claim}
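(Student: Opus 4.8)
The plan is to reduce the claim to the uniform definability of the individual ``coordinate'' submodules $(G')^{(a_i-\alpha_i)}:=\{g^{a_i-\alpha_i}\mid g\in G'\}$, $i=1,\ldots,m$, and then to exhibit each of these by a single group-theoretic formula built out of the absolutely definable subgroup $G'$ and the exponentiation formula $exp$ of Section~\ref{se:3.4}.

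First I would record the algebraic reduction. Since $G$ is metabelian, $G'$ is abelian, so for each $i$ the map $g\mapsto g^{a_i-\alpha_i}$ is an endomorphism of the abelian group $G'$; hence its image $(G')^{(a_i-\alpha_i)}$ is a subgroup of $G'$. As $I_{\bar\alpha}$ is generated as an ideal of $\Z\bar G\cong A$ by $a_1-\alpha_1,\ldots,a_m-\alpha_m$, every $g^{Q}$ with $Q\in I_{\bar\alpha}$ is a product of elements of the $(G')^{(a_i-\alpha_i)}$, so
\[
(G')^{I_{\bar\alpha}}\;=\;(G')^{(a_1-\alpha_1)}\cdots(G')^{(a_m-\alpha_m)} .
\]
Because $G'$ is abelian and each factor is a subgroup, the product on the right consists precisely of the products $w_1\cdots w_m$ with $w_i\in(G')^{(a_i-\alpha_i)}$; an a priori unbounded product of generators thus collapses to one with exactly $m$ factors. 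This is the observation that keeps the quantifier complexity bounded uniformly.

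Next I would express a single factor. Recall the module action: $g^{a_i}=x_i^{-1}gx_i$ and $g^{\alpha_i}$ is the ordinary $\alpha_i$-th power in the abelian group $G'$, so $g^{a_i-\alpha_i}=(x_i^{-1}gx_i)(g^{\alpha_i})^{-1}$. The subgroup $G'$ is absolutely definable in $G$ (it is a verbal subgroup of finite width); let $\theta_{G'}(u)$ define it. By Section~\ref{se:3.4} the formula $exp(u,v,\bar x)$ satisfies $G\models exp(g,h,m^*)$ iff $g=h^{m}$, for all $g,h\in G$ and $m\in\Z$ (convention: the first argument is the power of the second); in particular it expresses $\alpha_i$-th powers inside $G'$ once $\alpha_i$ is presented by its code $\bar k_i\in\Z^*$. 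Hence the formula
\[
\chi_i(w,y_i,\bar z_i)\;:=\;\exists g\,\exists v\,\bigl(\theta_{G'}(g)\wedge exp(v,g,\bar z_i)\wedge w=y_i^{-1}gy_iv^{-1}\bigr)
\]
has the property that, with $x_i$ in place of $y_i$ and a code $\bar k_i\in\Z^*$ of $\alpha_i$ in place of $\bar z_i$, $\chi_i(w,x_i,\bar k_i)$ defines $(G')^{(a_i-\alpha_i)}$. Finally set
\[
\varphi(y,y_1,\ldots,y_n,\bar z_1,\ldots,\bar z_m)\;:=\;\exists w_1\cdots\exists w_m\Bigl(y=w_1\cdots w_m\wedge\bigwedge_{i=1}^{m}\chi_i(w_i,y_i,\bar z_i)\Bigr);
\]
by the reduction above, $\varphi(y,x_1,\ldots,x_n,\bar k_1,\ldots,\bar k_m)$ defines $(G')^{I_{\bar\alpha}}$ in $G$.

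Uniformity is then immediate from the construction: $\theta_{G'}$ and $exp$ are fixed parameter-free formulas, and the basis $(a_1,\ldots,a_n)$ and the exponents $(\alpha_1,\ldots,\alpha_m)$ enter $\varphi$ only as the values substituted for the free variables. The only real subtlety — and it is one of presentation rather than a genuine obstacle — is that the language of $G$ has no integers, so the $\alpha_i$ must be fed in through the interpreted copy $\Z^*$, and the correctness of the whole construction rests on the two facts already at our disposal: that $exp$ defines $\Z$-exponentiation on all of $G$, in particular on $G'$, and that $G'$ is abelian, the latter being exactly what turns $(G')^{I_{\bar\alpha}}$ into a subgroup expressible by a bounded-length formula.
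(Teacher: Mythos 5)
Your proof is correct and takes essentially the same approach as the paper's: both reduce $(G')^{I_{\bar\alpha}}$ to a bounded product $g_1^{a_1-\alpha_1}\cdots g_m^{a_m-\alpha_m}$ (justified by the fact that each $g\mapsto g^{a_i-\alpha_i}$ is an endomorphism of the abelian group $G'$), then express each factor as $(x_i^{-1}g_ix_i)\cdot(g_i^{\alpha_i})^{-1}$ using the definability of $G'$ and the $\exp$ formula of Section~\ref{se:3.4}. You spell out the bounded-quantifier-complexity point and the explicit formula $\varphi$ more carefully than the paper does, but the argument is the same.
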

Indeed, the abelian group $(G')^{I_{\bar \alpha}}$  is generated by the   set $\{g^{a_i - \alpha_i} \mid g \in G', i = 1, \ldots, m\}$. 
 It follows that every element $u \in (G')^{I_{\bar \alpha}}$ can be presented as a product 
$$
u = g_1^{a_1-\alpha_1} \ldots g_m^{a_m-\alpha_m},
$$
for some $g_1, \ldots,g_m \in G'$, or , equivalently, 
in the form 
\begin{equation} \label{eq:6.1.1}
u = g_1^{a_1}g_1^{-\alpha_1} \ldots g_m^{a_m}g_m^{-\alpha_m}
\end{equation}
where $g_i^{a_i}$ is a conjugation of $g_i$ by $a_i$, and $g_i^{-\alpha_i}$ is the standard exponentiation of $g_i$ by the integer $-\alpha_i$, $i = 1, \ldots,m$.  It was shown  that there exists a formula $\exp_2(u,v,\bar z)$ such that for any $g, h  \in G'$ and $\alpha = \bar k \in \Z^*$  the formula
$\exp_2(g,h,\bar k)$ holds in $G$ if and only if $g = h^{\alpha}$.    Using formula $\exp_2(u,v,\bar z)$ and definability of the commutant $G'$ in $G$ (see Section \ref{se:3.2}) one can write down the condition (\ref{eq:6.1.1}) by a group theory formula uniformly in $(a_1, \ldots,a_n)$ and $(\alpha_1, \ldots,\alpha_m)$, as claimed.

\begin{lemma} \label{le:6.3}\cite[Lemma 4.24]{KMS}
Let $g, h \in G'$ and $P \in \Z[a_1, \ldots,a_m]$, $m\leq n$. Then $g^P = h$ if and only if the following condition holds:
\begin{equation} \label{eq:g-h-P}
\forall \alpha_1, \ldots \alpha_m \in \Z(g^{P(\alpha_1, \ldots,\alpha_m)} = h \ mod \ (G')^{I_{\bar \alpha}}).
\end{equation}
\end{lemma}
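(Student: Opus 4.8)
The plan is to prove both implications by exploiting the discrimination result (the Claim that $\Z[a_1,\ldots,a_n]$ is discriminated into $\Z$ by the homomorphisms $\lambda_{\bar\alpha}$) together with the module structure of $G'$. The forward direction is the easy one: suppose $g^P = h$. Fix $\bar\alpha = (\alpha_1,\ldots,\alpha_m) \in \Z^m$. Using the identity $P(a_1,\ldots,a_m) = P(\alpha_1,\ldots,\alpha_m) + \sum_{i=1}^m (a_i - \alpha_i) f_i$ recorded just before the Claim, apply both sides to $g$ in the $R$-module $G'$: since $(G')^{I_{\bar\alpha}}$ is exactly the submodule obtained by acting with the ideal $I_{\bar\alpha} = (a_1-\alpha_1,\ldots,a_m-\alpha_m)$, the term $g^{\sum_i (a_i-\alpha_i)f_i}$ lies in $(G')^{I_{\bar\alpha}}$, so $g^{P} \equiv g^{P(\alpha_1,\ldots,\alpha_m)} \pmod{(G')^{I_{\bar\alpha}}}$, and hence $g^{P(\alpha_1,\ldots,\alpha_m)} = h \pmod{(G')^{I_{\bar\alpha}}}$. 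This gives \eqref{eq:g-h-P}.

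For the converse, assume \eqref{eq:g-h-P} holds; set $d = g^{P} h^{-1} \in G'$, so that we must show $d = 1$. By the computation in the forward direction, $g^P \equiv g^{P(\bar\alpha)} \pmod{(G')^{I_{\bar\alpha}}}$ for every $\bar\alpha$, and by hypothesis $g^{P(\bar\alpha)} \equiv h \pmod{(G')^{I_{\bar\alpha}}}$; combining, $d = g^P h^{-1} \in (G')^{I_{\bar\alpha}}$ for every $\bar\alpha \in \Z^m$. So it suffices to prove: if $d \in G'$ lies in $(G')^{I_{\bar\alpha}}$ for all $\bar\alpha \in \Z^m$, then $d = 1$. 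Here I would use the normal form of Proposition \ref{nf-2}: write $d = \prod_{1\le j<i\le n} [x_i,x_j]^{\beta_{ij}}$ in the free module $G'$ over $A = \Z[a_1^{\pm1},\ldots,a_n^{\pm1}]$ (using the normal-form basis structure, or more conveniently Proposition \ref{nf}). The submodule $(G')^{I_{\bar\alpha}}$ consists, in terms of module coordinates, of tuples of Laurent polynomials lying in the ideal of $A$ generated by $a_1 - \alpha_1,\ldots,a_m-\alpha_m$ — equivalently, polynomials that vanish under the substitution $a_i \mapsto \alpha_i$, $i \le m$. Thus $d \in (G')^{I_{\bar\alpha}}$ for all $\bar\alpha$ forces each coordinate polynomial $\beta_{ij}$ to vanish under every substitution $(a_1,\ldots,a_m) \mapsto (\alpha_1,\ldots,\alpha_m) \in \Z^m$ (with $a_{m+1},\ldots,a_n$ kept as indeterminates); by the separation half of the Claim (applied coefficient-wise in $\Z[a_{m+1}^{\pm1},\ldots,a_n^{\pm1}]$, clearing denominators to reduce to polynomials) this forces $\beta_{ij} = 0$. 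Hence $d = 1$ and $g^P = h$.

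The main obstacle I anticipate is the bookkeeping in the converse: one must be careful that $(G')^{I_{\bar\alpha}}$, when read off in module coordinates relative to the chosen basis, really is the set of coordinate tuples whose entries lie in $I_{\bar\alpha}A$, and that "vanishing for all integer substitutions of $a_1,\ldots,a_m$" genuinely kills a Laurent polynomial in $\Z[a_1^{\pm1},\ldots,a_n^{\pm1}]$ — this needs clearing the monomial denominators and then invoking the separation statement of the Claim (an integral-domain Zariski-density argument over $\Z$), rather than just its discrimination form. The module-theoretic step is clean once Proposition \ref{nf-2} (uniqueness of normal forms) is in hand; the only subtlety is that $G'$ is free over $A$ only after fixing a basis of $G$, which is exactly the hypothesis "$P \in \Z[a_1,\ldots,a_m]$, $m \le n$" presupposes via the ambient choice of $a_1,\ldots,a_n$. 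Everything else is the routine identity $P(\bar a) - P(\bar\alpha) \in I_{\bar\alpha}$ used in both directions.
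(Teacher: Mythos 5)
The forward implication and the reduction of the converse to showing
\[
\bigcap_{\bar\alpha\in\Z^m}(G')^{I_{\bar\alpha}}=\{1\}
\]
are fine, and you correctly flag the final step as the delicate one. But the way you propose to carry out that step does not work: the assertion that ``$(G')^{I_{\bar\alpha}}$ consists, in terms of module coordinates, of tuples of Laurent polynomials lying in $I_{\bar\alpha}A$'' is \emph{false}. The inclusion $\supseteq$ holds, but $\subseteq$ fails, precisely because $G'$ is not a free $A$-module on $C_X$: the Jacobi relations are nontrivial, and the collection process of Proposition~\ref{nf-b} that produces normal forms does not respect the ideal coordinatewise. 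Concretely, take $n=m=3$ and $d=[x_2,x_1]^{a_3-\alpha_3}\in (G')^{I_{\bar\alpha}}$. Writing $a_3-\alpha_3=(a_3-1)+(1-\alpha_3)$ and applying \eqref{eq:xi-xj-xk} gives the normal form
\[
d=[x_2,x_1]^{\,1-\alpha_3}\,[x_3,x_1]^{\,a_2-1}\,[x_3,x_2]^{\,1-a_1},
\]
whose $(2,1)$-coordinate $1-\alpha_3$ is a nonzero integer for $\alpha_3\neq1$ and hence not in $I_{\bar\alpha}A$. So reading the normal-form coordinates of a generic element of $(G')^{I_{\bar\alpha}}$ and expecting them all to lie in $I_{\bar\alpha}A$ is wrong, and the appeal to the separation Claim at that point does not go through. (Invoking Proposition~\ref{nf} instead does not repair this: it gives a free module structure only over $\Z[a_1^{\pm1},a_2^{\pm1}]$, so the ideal generators $a_k-\alpha_k$ with $k\geq3$ are not scalars for that structure, and the intersection cannot be computed coordinatewise either.)

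The fix is to avoid the normal-form coordinates entirely and use an honest $A$-module embedding of $G'$ into a free $A$-module. The paper already sets this up: the Fox derivatives $d_1,\dots,d_n$ of Section~\ref{se:3.1}, or equivalently the Magnus embedding, give an injective map $G'\hookrightarrow A^n$ that intertwines the $A$-action on $G'$ with the (possibly $\mathrm{inv}$-twisted, which is harmless since $\mathrm{inv}$ is a ring automorphism) coordinatewise $A$-action on $A^n$. Once $G'$ sits inside a free module $A^n$, the image of $(G')^{I_{\bar\alpha}}$ lands in $(I_{\bar\alpha}A)^n$, and
\[
\bigcap_{\bar\alpha}(I_{\bar\alpha}A)^n=\Bigl(\bigcap_{\bar\alpha}I_{\bar\alpha}A\Bigr)^n=0
\]
follows from exactly the clear-denominators-and-separate argument you describe. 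So your overall strategy is salvageable, but the key intermediate claim about normal-form coordinates must be replaced by the Fox-derivative (or Magnus) embedding; as written, that step is the gap.
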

\begin{cor} \label{co:formula-Q}
    Let  $g, h \in G'$  and  $Q =P(a_1,...,a_m)(a_1^{k_1}\ldots a_m^{k_m})^{-1}\in  \Z[a_1,a_1^{-1}, \ldots,a_n,a_n^{-1}]$. Then   $g^Q=h$ if and only if 
    $$
    \exists f \in G' \forall \bar \alpha \in \Z^m (g^{P(\bar \alpha)} = f \ mod \ (G')^{I_{\bar \alpha}}) \wedge (h^{\alpha_1^{k_1} \ldots \alpha_m^{k_m}} = f \ mod \ (G')^{I_{\bar \alpha}}).
    $$
\end{cor}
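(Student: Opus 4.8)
The plan is to reduce the statement about a general Laurent polynomial $Q$ to the already-proven Lemma~\ref{le:6.3}, which handles the case of an ordinary polynomial $P \in \Z[a_1,\dots,a_m]$. Write $Q = P(a_1,\dots,a_m)(a_1^{k_1}\cdots a_m^{k_m})^{-1}$ with $P$ a genuine polynomial. Since $G'$ is an $R$-module with $R = \Z[a_1,a_1^{-1},\dots,a_n,a_n^{-1}]$, the monomial $a_1^{k_1}\cdots a_m^{k_m}$ acts invertibly on $G'$; hence $g^Q = h$ is equivalent to $g^P = h^{a_1^{k_1}\cdots a_m^{k_m}}$ (multiply both sides of $g^{P(a_1^{k_1}\cdots a_m^{k_m})^{-1}} = h$ by the invertible monomial). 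This is the key algebraic observation and it is essentially immediate from the module structure.

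Next I would introduce an auxiliary element $f \in G'$ as the common value of these two sides: $g^Q = h$ holds if and only if there exists $f \in G'$ with $g^P = f$ \emph{and} $h^{a_1^{k_1}\cdots a_m^{k_m}} = f$. Now apply Lemma~\ref{le:6.3} twice, once to each of these two module equations. The first equation $g^P = f$ with $P \in \Z[a_1,\dots,a_m]$ is literally covered by the lemma, giving the clause $\forall \bar\alpha \in \Z^m\,(g^{P(\bar\alpha)} = f \bmod (G')^{I_{\bar\alpha}})$. For the second equation $h^{a_1^{k_1}\cdots a_m^{k_m}} = f$, observe that $a_1^{k_1}\cdots a_m^{k_m}$ is itself a polynomial in $\Z[a_1,\dots,a_m]$ (all exponents $k_i$ are nonnegative here, since $Q$ was written with the inverse monomial split off), so Lemma~\ref{le:6.3} again applies and yields $\forall \bar\alpha \in \Z^m\,(h^{\alpha_1^{k_1}\cdots \alpha_m^{k_m}} = f \bmod (G')^{I_{\bar\alpha}})$, because the image of the monomial $a_1^{k_1}\cdots a_m^{k_m}$ under $\lambda_{\bar\alpha}$ is exactly $\alpha_1^{k_1}\cdots\alpha_m^{k_m}$. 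Combining the two and putting the common $f$ under a single existential quantifier gives precisely the displayed condition in the corollary.

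The one point that needs a little care — and what I would single out as the main obstacle, though it is minor — is justifying that the two module equations $g^P = f$ and $h^{a_1^{k_1}\cdots a_m^{k_m}} = f$ can indeed be glued by a single $f$: this is just the observation that $g^Q = h \iff g^P = h^{a_1^{k_1}\cdots a_m^{k_m}}$, which in turn rests on the monomial acting bijectively on the $R$-module $G'$. One should also note that this is stated for a fixed basis $(a_1,\dots,a_n)$ of $G$, so that $\Z\bar G \cong R$ and the action of $R$ on $G'$ is well-defined (Proposition~\ref{nf}); the parametrization by $\bar a$ is exactly as in Section~\ref{2.4}. No new estimates or constructions are required — the corollary is a bookkeeping consequence of Lemma~\ref{le:6.3} applied to both factors of the factorization $Q = P \cdot (\text{monomial})^{-1}$.
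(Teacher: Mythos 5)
Your proof is correct, and since the paper gives no explicit proof for this corollary, what you write is evidently the intended deduction. The chain of reasoning is sound: the monomial $a_1^{k_1}\cdots a_m^{k_m}$ is a unit in $R = \Z[a_1,a_1^{-1},\ldots,a_n,a_n^{-1}]$ and so acts bijectively on the $R$-module $G'$ (a trivial fact for any module over any ring — no faithfulness is needed), which gives $g^Q = h \iff g^P = h^{a_1^{k_1}\cdots a_m^{k_m}}$; introducing $f$ as the common value and applying Lemma~\ref{le:6.3} to each of the two equations $g^P = f$ and $h^{a_1^{k_1}\cdots a_m^{k_m}} = f$ (both with genuine polynomials in $\Z[a_1,\ldots,a_m]$, as $k_i \geq 0$) yields the two universally quantified clauses, and commuting the two $\forall\bar\alpha$'s into a single one is a logical identity. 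You are also right to flag that the $k_i$ must be nonnegative and $P$ an honest polynomial (not Laurent) for $P(\bar\alpha)$ and $\alpha_1^{k_1}\cdots\alpha_m^{k_m}$ to be integers, so that the statement even parses; this is consistent with the canonical form $Q = P/\bar a^{\bar\beta}$, $\beta_i \in \N$, given in Section~\ref{subsec:3.6}. No gaps.
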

\begin{lemma} \label{le:action}
    The following is true in $G$:
\begin{enumerate}
\item [1)] There is a formula $E(x,y,\bar u,\bar v)$ of group language such that for any basis $\bar a$ of $G$ for any $g,h \in G'$ and any $Q \in  \Z[a_1, \ldots,a_m]$ one has
$$
G \models E(g,h,\nu^*(Q),\bar a) \Longleftrightarrow g^Q = h;
$$
\item [2)] There is a formula $D(x,y,u,\bar v,z)$ of group language such that for any basis $\bar a$ of $G$, for any $1 \neq f \in G$,  for any $g,h \in G'$ and any $Q \in \Z[a_1, \ldots,a_m]$ one has
$$
G \models D(g,h,\nu_f^*(Q),\bar a, f) \Longleftrightarrow g^Q = h.
$$
\end{enumerate}
\end{lemma}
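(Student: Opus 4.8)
The plan is to reduce the relation $g^{Q}=h$ (for $g,h\in G'$ and $Q\in\Z[a_1,\ldots,a_m]$) to a statement quantifying over all integer specializations of $Q$, with Lemma~\ref{le:6.3} as the semantic backbone, and then to observe that every ingredient of that statement is already available as a group-language formula from the preceding subsections. By Lemma~\ref{le:6.3}, $g^{Q}=h$ is equivalent to: $g^{Q(\bar\alpha)}=h \ (\mathrm{mod}\ (G')^{I_{\bar\alpha}})$ for every $\bar\alpha=(\alpha_1,\ldots,\alpha_m)\in\Z^m$. So $E$ should assert $x,y\in G'$ together with this universally quantified congruence, the integer $Q(\bar\alpha)$ being extracted from the code $\bar u$ of $Q$ and the tuple $\bar\alpha$.

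Three previously built pieces are combined. First, an \emph{evaluation formula}: the map $R\times\Z^{m}\to\Z$, $(Q,\bar\alpha)\mapsto Q(\bar\alpha)$, is computable, hence definable in $\N$, hence in $\Z$; transporting it along the absolute interpretations $\Z^{*}$ and $R^{*}$ of Section~\ref{subsec:3.6} yields a group-language formula $\mathrm{eval}(\bar u,\bar k_1,\ldots,\bar k_m,\bar k)$ that holds in $G$ exactly when $\bar u=\nu^{*}(Q)$ for some $Q\in R$, each $\bar k_i$ is a $\Z^{*}$-code of an integer $\alpha_i$, and $\bar k$ is a $\Z^{*}$-code of the integer $Q(\alpha_1,\ldots,\alpha_m)$. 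Second, integer exponentiation inside $G'$: the formula $\exp_2(u,v,\bar z)$ of Section~\ref{se:3.4} computes $u=v^{\alpha}$ for $u,v\in G'$ and $\bar z$ a $\Z^{*}$-code of $\alpha$. Third, uniform definability of the congruence subgroups: the Claim preceding Lemma~\ref{le:6.3} gives a formula $\varphi(y,y_1,\ldots,y_n,\bar z_1,\ldots,\bar z_m)$ such that $\varphi(\,\cdot\,,a_1,\ldots,a_n,\bar k_1,\ldots,\bar k_m)$ defines $(G')^{I_{\bar\alpha}}$, uniformly in the basis $(a_1,\ldots,a_n)$ and in $\bar\alpha$.

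Assembling these, one puts
\[
E(x,y,\bar u,\bar v):\quad x\in G'\wedge y\in G'\wedge\forall\bar k_1\cdots\forall\bar k_m\Big(\textstyle\bigwedge_{i}(\bar k_i\in\Z^{*})\ \to\ \exists\bar k\,\exists w\,\big(\mathrm{eval}(\bar u,\bar k_1,\ldots,\bar k_m,\bar k)\wedge\exp_2(w,x,\bar k)\wedge\varphi(wy^{-1},v_1,\ldots,v_n,\bar k_1,\ldots,\bar k_m)\big)\Big),
\]
where $\bar v=(v_1,\ldots,v_n)$ is interpreted by a basis $\bar a$. Then, for $g,h\in G'$, the statement $G\models E(g,h,\nu^{*}(Q),\bar a)$ says precisely that $g^{Q(\bar\alpha)}\equiv h \ (\mathrm{mod}\ (G')^{I_{\bar\alpha}})$ for all $\bar\alpha\in\Z^m$, which by Lemma~\ref{le:6.3} is equivalent to $g^{Q}=h$; uniformity in $\bar a$ is inherited from that of $\mathrm{eval}$, $\exp_2$, and $\varphi$. (If the Laurent case $Q\in\Z[a_1,a_1^{-1},\ldots,a_n,a_n^{-1}]$ is wanted, one replaces the inner part of $E$ using Corollary~\ref{co:formula-Q} in place of Lemma~\ref{le:6.3}.) Part~2) is then proved identically, replacing the absolute interpretations $\Z^{*}$, $R^{*}$ by the regular injective interpretations $\Z^{*}_{f}$, $R^{*}_{f}$ of Sections~\ref{subsec:3.5}--\ref{subsec:3.6}, which carry the single extra parameter $f\neq1$; since those interpretations and the corresponding evaluation and exponentiation formulas are uniform in $f$, and $\varphi$ can equally be fed $\Z^{*}_{f}$-codes of integers, one obtains $D(x,y,u,\bar v,z)$ with $z$ standing for $f$.

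I expect the only genuinely fiddly point to be the evaluation formula $\mathrm{eval}$: one must check that the enumeration $\nu$ of $R$, the ring operations on $\nu(R)$, and the substitution $a_i\mapsto\alpha_i$ are all effective, and that the passage through $\Z^{*}$ (resp.\ $\Z^{*}_{f}$) keeps this uniform in the basis. But this is bookkeeping of the same kind already carried out in Section~\ref{subsec:3.6}, not a new difficulty; everything else is a routine Boolean combination of formulas constructed earlier in this section.
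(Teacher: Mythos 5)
Your proposal is correct and follows essentially the same route as the paper's own proof: both reduce the target relation via Lemma~\ref{le:6.3} (and Corollary~\ref{co:formula-Q} for the Laurent case) to expressing condition~(\ref{eq:g-h-P}), then assemble the formula from the Claim giving uniform definability of $(G')^{I_{\bar\alpha}}$, the exponentiation formula from Section~\ref{se:3.4}, and a computability-implies-definability argument to build the evaluation formula (your $\mathrm{eval}$ is the paper's $M$). The only difference is cosmetic: you spell out the final Boolean combination explicitly, whereas the paper leaves it implicit.
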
 
\begin{proof}
    We prove 1), the argument for 2) is similar. In view of Lemma \ref{le:6.3} and Corollary \ref{co:formula-Q} it suffices to show that the condition (\ref{eq:g-h-P}) can be written by a formula of group language. By Claim 2 the  subgroup $(G')^{I_{\bar \alpha}} \leq G'$ is definable in $G$ uniformly in $\bar a = (a_1, \ldots,a_n)$ and $\bar \alpha = (\alpha_1, \ldots,\alpha_m)$, hence  the relation $u = v \ mod \ (G')^{I_{\bar \alpha}}$ is definable in $G$ uniformly in $\bar a $ and $\bar \alpha$.  In Section \ref{se:3.4} we showed that the exponentiation in $G$ by elements from $\Z^*$ is definable in $G$ by the formula $\exp(u,v,\bar x)$.  To finish the proof it suffices to show that there is a formula $M(\bar u, \bar u_1, \ldots, \bar u_m,\bar w)$ of group language such that 
    for any $P \in \Z[a_1,a_1^{-1}, \ldots,a_m,a_m^{-1}]$, $\alpha_1, \ldots,\alpha_m \in \Z$, and $\beta \in \Z$  one has 
    \begin{equation} \label{eq:M}
    G \models M(\nu^*(P),\alpha_1^*,\ldots,\alpha_m^*, \beta^*) \Longleftrightarrow P(\alpha_1^*,\ldots,\alpha_m^*) = \beta^*.
    \end{equation}
    Note, that given $\nu^*(P),\alpha_1^*,\ldots,\alpha_m^*$ one can compute in $\Z^*$ the value $P(\alpha_1^*,\ldots,\alpha_m^*)$. Indeed, from $\nu^*(P)$ one can recover by formulas in $\Z^*$ the tuple 
    $$
   u^*  = (\gamma_1^*,\alpha_{11}^*, \ldots\alpha_{1n}^*, \ldots , \gamma_d^*,\alpha_{d1}^*, \ldots, \alpha_{dn}^*)
    $$
   from (\ref{eq:nu(P)}) in Section \ref{subsec:3.6} that describes in $\Z^*$ the polynomial $P$ (since $\Z^* \simeq \Z$ one can use the same formulas that recover $P$ from $u$ in $\Z$). Having $u^*$ and $\alpha_1^*,\ldots,\alpha_m^*$  one can compute in $\Z^*$ the number $P(\alpha_1^*,\ldots,\alpha_m^*)$. Since every computable predicate in $\Z^*$ is definable in $\Z^*$, there exists a formula in the ring language that defines the predicate $P(\alpha_1^*,\ldots,\alpha_m^*) = \beta^*$ in $\Z^*$. But $\Z^*$ is absolutely interpretable in $G$ hence there is the required formula $M(\bar u, \bar u_1, \ldots, \bar u_m,\bar w)$ of group language which satisfies (\ref{eq:M}). This proves 1). 
    
\end{proof}

\begin{prop}
Let $\bar a$ be a basis of $G$. Then  following  are true:
\begin{enumerate}
    \item [1)] The module $G'_R =(G',R;s)$  is interpretable  in $G$ with parameters $\bar a$ as  $(G',R^*;s^*)$, where $R^*$ is the interpretation of $R$ in $G$ from Section \ref{subsec:3.6}, and $s^*$ is the  predicate for  the action of $R^*$ on $G'$ defined by the formula $E(x,y,\bar u,\bar v)$ from Lemma \ref{le:action}. 
    This interpretation is uniform in bases $\bar a$.
    \item [2)] The module $G'_R =(G',R;s)$  is injectively interpretable  in $G$ with parameters $\bar a$ and $1 \neq g \in G$ as  $(G',R_g^*;s^*)$, where $R_g^*$ is the interpretation of $R$ in $G$ from Section \ref{subsec:3.6}, and $s^*$ is the  predicate for  the action of $R^*$ on $G'$ defined by the formula $D(x,y,u,\bar v,z)$ from Lemma \ref{le:action}. 
    This interpretation is uniform in bases $\bar a$ and $1 \neq g \in G$.
    \end{enumerate}
\end{prop}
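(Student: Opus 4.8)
The plan is to assemble the objects and formulas already built in Sections \ref{se:3.2}--\ref{subsec:3.6} and in Lemma \ref{le:action}; at this stage the proof is essentially bookkeeping. First I would fix a basis $\bar a=(a_1,\ldots,a_n)$ of $G$ and recall the three ingredients: $G'$ is $0$-definable in $G$ (it is a verbal subgroup of finite width); the ring $R=\Z[a_1,a_1^{-1},\ldots,a_n,a_n^{-1}]$ is absolutely interpretable in $G$ as $R^*$ with coordinate map $\mu_{\bar a}\colon R^*\to R$, $\nu^*(Q)\mapsto Q$ (Section \ref{subsec:3.6}); and $\Z^*$-exponentiation on $G$ is defined by the formula $\exp$ of Section \ref{se:3.4}. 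I would then write down the interpreting code of the two-sorted structure $G'_R=(G',R;s)$: the module sort is interpreted by the definable subgroup $G'\le G$ with its multiplication and the trivial equivalence relation; the ring sort by $R^*$ with its ring operations; and the symbol $s$ by a group-language formula $s^*(x,y,\bar u,\bar a)$ whose polynomial part is the formula $E$ from Lemma \ref{le:action}(1).

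Next I would check that $(G',R^*;s^*)$ is isomorphic to $G'_R$ via the map equal to the identity on the module sort and to $\mu_{\bar a}$ on the ring sort. On the ring sort $\mu_{\bar a}$ is a ring isomorphism by the construction of $R^*$; on the module sort the identity is trivially a group isomorphism; and the pair of maps intertwines $s^*$ with $s$ precisely because $G\models E(g,h,\nu^*(Q),\bar a)\iff g^Q=h$ by Lemma \ref{le:action}(1). The only place this needs a small extra argument is when $Q$ is a genuine Laurent polynomial: writing $Q=P(a_1,\ldots,a_m)(a_1^{k_1}\cdots a_m^{k_m})^{-1}$, I would unfold $g^Q=h$ by Corollary \ref{co:formula-Q} (hence Lemma \ref{le:6.3}), using that inside $\Z^*$ one recovers from $\nu^*(Q)$ both the data of $\nu^*(P)$ and the exponent tuple $\bar k^*$, and that conjugation by the monomial $a_1^{k_1}\cdots a_m^{k_m}$ together with $\exp$ are available; this packages the condition into a single group-language formula with parameters $\bar a$, which is what $s^*$ should be.

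For uniformity in the basis I would simply note that the defining formula of $G'$, the code of $R^*$, and the formula $E$ do not depend on $\bar a$: each uses $\bar a$ only as a tuple of parameters, and the constructions of Sections \ref{se:3.2}--\ref{subsec:3.6} and of Lemma \ref{le:action} were carried out uniformly in bases, so substituting any other basis $\bar b$ of $G$ for the parameter variables again produces an interpretation of $G'_R$. Part 2) is the same argument with the regular injective interpretation $R_g^*$ of $R$ in place of $R^*$, the additional parameter $1\neq g\in G$, the formula $D$ of Lemma \ref{le:action}(2) in place of $E$, and coordinate map $\mu_{\bar a}\colon R_g^*\to R$, $\nu_g^*(Q)\mapsto Q$. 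Here the interpretation is injective because both sorts carry the trivial equivalence relation ($R_g^*$ is built on the cyclic subgroup $\langle g\rangle$, and $G'$ is interpreted as itself), and uniformity in $\bar a$ and in $1\neq g\in G$ follows as before, using that $\langle g\rangle$, the operations $+_g$ and $\circ_g$, the formula $D$, and $\exp$ are all uniform in $g$ and that $g\neq 1$ is definable.

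The step I expect to require the most care is the reduction just described — expressing $g^Q=h$ for a Laurent-polynomial scalar $Q$ by one group-language formula via Lemmas \ref{le:6.3} and \ref{le:action} and Corollary \ref{co:formula-Q} — together with checking that this is precisely the predicate intertwined by $\mu_{\bar a}$; the remainder is routine assembly of pieces that are already in place.
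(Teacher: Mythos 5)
Your proposal is correct and takes essentially the same route as the paper: interpret $G'$ as a $0$-definable subgroup with trivial equivalence, interpret $R$ via $R^*$ (resp.\ $R_g^*$), and use the formulas $E$ (resp.\ $D$) of Lemma \ref{le:action} for the scalar-multiplication predicate, with uniformity inherited from the uniformity of those constructions. The paper's proof is just a terse statement of this assembly; your unwinding through Corollary \ref{co:formula-Q} and the coordinate map $\mu_{\bar a}$ matches what Lemma \ref{le:action} already packages.
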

\begin{proof}
    As we mentioned above $G'$ is absolutely definable in $G$, the ring $R$ is interpretable in $G$ via interpretations $R^*$ and $R^*_g,$ where $ g \in G, g \neq 1$. By Lemma \ref{le:action} the predicates $s^*$ from $(G',R^*;s^*)$ and $(G',R_g^*;s^*)$ are definable in $G$ by formulas $E(x,y,\bar u,\bar v)$ and $D(x,y,u,\bar v,z)$ with parameters $\bar a$ and $\bar a, g $ uniformly in these parameters. This proves the theorem.
\end{proof}

\subsection{Regular bi-interpretability of $\Z$ and $G$}

\begin{theorem}  \label{th4} The following holds in $G$.
\begin{enumerate}
\item [1)]  The set of all free bases $x = (x_1, \ldots,x_n)$  of $G$  is absolutely definable in $G$.
\item [2)] There is a formula $F(u,w,\bar v, \bar y, \bar z)$, where $\bar v$ and $\bar y$ are tuples of variables of length $n$, and $\bar z$ is a tuple of variables of length $n(n-1)/2$  such that for any $h \in G$, any $g \in G, g \neq 1$, any basis $x = (x_1, \ldots,x_n)$ of $G$, any tuple $\gamma \in \Z^n$, and any tuple $\beta = (\beta_{2,1}, \ldots,\beta_{n,n-1})$, where $\beta_{i,j} \in \Z[a_1,a_1^{-1}, \ldots,a_i,a_i^{-1}]$, $a_i = \bar{x_i}$, the following equivalence holds:
$$
G \models F(h,g,x, {\gamma}^*, \nu(\beta)^*)\Longleftrightarrow h = x^\gamma c_x^\beta, 
$$
i.e., $t_{\tilde x}(h) = \gamma\cdot\beta$ (see Section \ref{subsec:2.5}). Here $m \to m^*$ is the isomorphism $\Z \to \Z_g^*$.
\end{enumerate}
\end{theorem}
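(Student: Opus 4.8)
The plan is to reduce both parts to the normal-form description of Section~\ref{subsec:2.5} together with the interpretations of $\Z$, of $R=\Z[a_1^{\pm 1},\dots,a_n^{\pm 1}]$, and of the $R$-module $G'$ produced in Sections~\ref{se:3.2}--\ref{2.4}. I would prove~2) first and then deduce~1) from it via Corollary~\ref{co:forms} and Theorem~\ref{th:normal-forms-char}.

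For~2), fix a basis $x=(x_1,\dots,x_n)$ and an element $g\neq1$ (recall $x_i\notin G'$ since $x$ is a basis). The formula $F$ simply asserts the existence of the constituent pieces of the normal form~\eqref{eq:normal-form}: there are $p_1,\dots,p_n\in G$ and $q_{ij}\in G'$ ($1\le j<i\le n$) with $p_i=x_i^{\gamma_i}$ for each $i$, with $q_{ij}=[x_i,x_j]^{\beta_{ij}}$ for each $i>j$, and with $h=p_1\cdots p_n\,q_{21}\cdots q_{n,n-1}$, the product taken in the fixed normal-form order. The relation $p_i=x_i^{\gamma_i}$ is written using the exponentiation formula $\exp$ of Section~\ref{se:3.4}, the integer $\gamma_i$ being supplied through its image $\gamma_i^{*}\in\Z_g^{*}$ and converted to $\Z^{*}$-coordinates by the definable isomorphism $\lambda_g\colon\Z^{*}\to\Z_g^{*}$ of Section~\ref{subsec:3.5}. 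The relation $q_{ij}=[x_i,x_j]^{\beta_{ij}}$ is written using the module-action formula $D(x,y,u,\bar v,z)$ of Lemma~\ref{le:action} with parameters $\bar v=x$, $z=g$, feeding $\beta_{ij}\in R$ through its code $\nu_g^{*}(\beta_{ij})=\nu(\beta_{ij})^{*}\in R_g^{*}$; since $D$ is stated for ordinary polynomials, the Laurent case is reduced to it by Corollary~\ref{co:formula-Q}. Correctness is then immediate: as $x$ is a basis, Corollary~\ref{co:normal-forms} says $h$ admits a unique decomposition of the form~\eqref{eq:normal-form}, so the only possible witnesses are $p_i=x_i^{\gamma_i}$ and $q_{ij}=[x_i,x_j]^{\beta_{ij}}$, and the product condition holds precisely when $h=x^\gamma c_x^\beta$, i.e.\ $t_{\tilde x}(h)=\gamma\cdot\beta$.

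For~1), I would characterize bases as follows. The set of tuples $z=(z_1,\dots,z_n)$ whose images $\bar z$ form a basis of $\bar G=G/G'$ is absolutely definable: $G'$ is absolutely definable (a verbal subgroup of finite width), so $\bar G$ is absolutely interpretable in $G$ together with its $\Z$-action (coming from $\Z^{*}$ and the formula $expnil_1$), and $\bar z$ is a basis of $\bar G$ iff it is $\Z$-independent and $\Z$-generating, a conjunction of first-order conditions on the interpreted module. Next, for any $z$ with $\bar z$ a basis of $\bar G$ we have $\Z\bar G\cong R$ via $a_i\mapsto\bar z_i$, and I claim that the proof of Lemma~\ref{le:action} goes through verbatim with $z$ in the role of $\bar a$: it uses only that $\bar z$ is a basis of $\bar G$ (the ideals $I_{\bar\alpha}$ and the subgroups $(G')^{I_{\bar\alpha}}$ involve the $\bar z_i$ alone), never that $z$ is a basis of $G$. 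Hence, uniformly in such $z$, the relation $h=z^\gamma c_z^\beta$ is expressible exactly as in Part~2 but using the \emph{absolute} copies $\Z^{*}$, $R^{*}$ and the formula $E$, with $z$ itself as the only parameter (a free variable of the defining formula). The defining formula for the bases then reads: $\bar z$ is a basis of $\bar G$, and every $h\in G$ has a unique representation $h=z^\gamma c_z^\beta$ with $\gamma\in\Z^n$ and $\beta$ a tuple of Laurent polynomials in the $\bar z_i$. If $z$ is a basis of $G$ then $\bar z$ is a basis of $\bar G$ and Corollary~\ref{co:normal-forms} gives the unique normal forms; conversely, when the first conjunct holds the module interpretation with parameter $z$ is valid, so the second conjunct correctly expresses existence and uniqueness of such representations, and Theorem~\ref{th:normal-forms-char} then forces $z$ to be a basis.

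The delicate point, which I would check most carefully, is the apparent circularity in Part~1: Lemma~\ref{le:action} is phrased with a basis \emph{of $G$} as a parameter, yet I want it precisely in order to \emph{define} the bases of $G$. The resolution is that its proof needs only the independently definable hypothesis that $\bar z$ is a basis of $\bar G$; I would re-inspect the two Claims of Section~\ref{2.4} and Lemma~\ref{le:6.3} to confirm that none of them exploits freeness of $z$ inside $G$. A secondary, purely bookkeeping, difficulty is keeping the $\Z^{*}$- and $\Z_g^{*}$-coordinatizations of $\Z$ (and the corresponding $R^{*}$, $R_g^{*}$) consistent across the exponentiation subformula, the module-action subformulas $E$ and $D$, and the Laurent extension of Corollary~\ref{co:formula-Q}, so that the tuple $\bar y$ of $F$ (given in $\Z_g^{*}$-coordinates) is fed into each piece in the coordinates it expects.
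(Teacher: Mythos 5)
Your proposal is correct and faithfully fills in the paper's one-line proof, which simply cites Theorem~\ref{th:normal-forms-char} together with the interpretations of $\Z$-exponentiation (Section~\ref{se:3.4}) and of the $\Z\bar G$-module action on $G'$ (Section~\ref{2.4}). The resolution you give of the apparent circularity, namely that Lemma~\ref{le:action} and the Claims of Section~\ref{2.4} rely only on the images $\bar z_i$ forming a basis of the abelianization $\bar G$ (an independently and absolutely definable condition), not on $z$ itself being a basis of $G$, is exactly what makes part~1) go through and is what the paper leaves implicit.
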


\begin{proof}
It follows from Theorem \ref{th:normal-forms-char} and  interpretation of $\Z$-exponentiation on $G$ (Section \ref{se:3.4}) and interpretation of $\Z(\bar G)$-exponentiation on $G'$ (Section \ref{2.4}).
    
\end{proof}

\begin{prop} \label{pr:6}
    $G$ is absolutely and injectively interpretable in $\Z$.
\end{prop}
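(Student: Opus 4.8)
The plan is to build the interpretation directly on the coordinate tuples of normal forms. Fix $n$ and recall that, relative to a basis $X = \{x_1,\ldots,x_n\}$, every $g \in G$ has a unique normal form $g = x^\gamma c_x^\beta$ (Corollary~\ref{co:normal-forms}), where $\gamma \in \Z^n$ and $\beta = (\beta_{2,1},\ldots,\beta_{n,n-1})$ is a tuple of Laurent polynomials $\beta_{ij} \in \Z[a_1,a_1^{-1},\ldots,a_i,a_i^{-1}]$. Using the enumeration $\nu$ of $R = \Z[a_1,a_1^{-1},\ldots,a_n,a_n^{-1}]$ from Section~\ref{subsec:3.6}, each polynomial tuple $\beta$ is coded by a tuple of integers $\nu(\beta) \in \Z^{n(n-1)/2}$, and the whole element $g$ is coded by the integer tuple $(\gamma,\nu(\beta)) \in \Z^{n + n(n-1)/2}$. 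So the base set of the interpretation is $A^* = \Z^{n}\times\nu(R)^{n(n-1)/2} \subseteq \Z^{n+n(n-1)/2}$, which is definable in $\Z$ since $\nu(R)$ is computable hence definable in $\Z$. Because normal forms are unique, no nontrivial equivalence relation is needed: we take $E_\Gamma$ to be equality, which makes the interpretation injective by construction.

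The key step is to define the group operations on $A^*$. For multiplication, Proposition~\ref{pr:mult-normal-forms} gives functions $f_{ij} \in \mathcal F$, \emph{uniform in the basis}, such that the $\tilde x$-coordinates of $gh$ are obtained from those of $g$ and $h$ by applying the $f_{ij}$; and each $f_{ij}$, being built from the $\varepsilon_\ell$ together with $+$ and $\cdot$, is a computable operation on coded polynomial tuples. Concretely, given codes $\nu(\beta)$, $\nu(\nu)$ and exponent tuples $\gamma,\delta$, one computes in $R$ (hence, via $\nu$, computably in $\N$, hence definably in $\Z$) the polynomials $\varepsilon_\ell(\gamma_\ell)$, forms the products and sums prescribed by the $f_{ij}$, and reads off the code of the resulting normal form of $gh$; the new exponent tuple is just $\gamma+\delta$. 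Thus the graph of multiplication on $A^*$ is a computable subset of $(\Z^{d})^3$ and so definable in $\Z$. Inversion is handled the same way (solve $gh = 1$ coordinate-wise, or simply note $g^{-1}$ has normal form computable from that of $g$ by the same machinery), and the constant $1$ is the zero tuple. By Theorem~\ref{th:normal-forms-char}, the resulting structure $\langle A^*; f^*, {}^{-1*}, 1^*\rangle$ is isomorphic to $G$ via $(\gamma,\nu(\beta))\mapsto x^\gamma c_x^\beta$ for any fixed basis $X$; different choices of $X$ give isomorphic structures, confirming the interpretation is well defined and absolute (no parameters from $\Z$ are used — the basis $X$ lives in $G$, not in the code).

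The main obstacle — really the only nonroutine point — is verifying that the coordinate functions for multiplication are genuinely \emph{computable} as operations on the codes $\nu(\beta)$, not merely definable by some formula with parameters. This is where the uniformity in Proposition~\ref{pr:mult-normal-forms} and the explicit form~\eqref{eq:f-p} of the elements of $\mathcal F$ do the work: since each $f_{ij}$ is a fixed polynomial expression in the $z$-variables and finitely many terms $\varepsilon_{\ell}(y)$, and since $\varepsilon_\ell(z) = \frac{a_\ell^z-1}{a_\ell-1}$ is an \emph{effectively computable} function $\Z \to R$ (one writes down the geometric-series polynomial, as in the Remark preceding Lemma~\ref{le:general-Jacobi}), the whole assignment $(\gamma,\delta,\nu(\beta),\nu(\nu)) \mapsto$ code of $gh$ is computed by an algorithm, hence is arithmetical, hence definable in $\Z$. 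Once this is in place, everything else is bookkeeping: one invokes transitivity of interpretations (Lemma~\ref{le:int-transitivity}) to compose the absolute interpretation of $R$ in $\Z$ with the coordinate arithmetic, checks the axioms of a group hold because the structure is literally isomorphic to $G$, and notes injectivity because $E_\Gamma$ is equality.
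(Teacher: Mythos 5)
Your overall approach is the same as the paper's: encode each $g\in G$ by the integer coordinate tuple $(\gamma,\nu(\beta))$ of its unique normal form relative to a fixed basis $\bar x$, take $E_\Gamma$ to be equality (giving injectivity), and argue that the induced operations on codes are computable, hence definable in $\Z$. The paper disposes of computability in one line by citing decidability of the word problem in $G$; you instead trace it through the coordinate functions $f_{ij}\in\mathcal F$ from Proposition~\ref{pr:mult-normal-forms}. Both routes are valid, and yours is somewhat more explicit about where the computability comes from, but the essential content is identical.

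There is, however, one step that fails as written: your base set $A^* = \Z^n \times \nu(R)^{n(n-1)/2}$ is too large. Uniqueness in Corollary~\ref{co:normal-forms} holds only when each $\beta_{ij}$ is restricted to the subring $\Z[a_1^{\pm 1},\ldots,a_i^{\pm 1}]$; if $\beta_{ij}$ is allowed to range over all of $R = \Z[a_1^{\pm 1},\ldots,a_n^{\pm 1}]$, the map $(\gamma,\nu(\beta))\mapsto x^\gamma c_x^\beta$ is surjective but not injective, since the Jacobi relations~\eqref{eq:xi-xj-xk} among the commutators $[x_i,x_j]$ produce collisions. With $E_\Gamma$ taken to be equality on your $A^*$, the quotient structure would therefore not be isomorphic to $G$, and the ``injective'' interpretation would break down. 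The fix is immediate and is exactly what the paper does: take the base set to be $S_G = \{ t(g) : g\in G\}$, i.e., those tuples whose $(i,j)$-component lies in $\nu\bigl(\Z[a_1^{\pm 1},\ldots,a_i^{\pm 1}]\bigr)$. This set is computable (hence definable in $\Z$), and the coordinate map $\mu_{\bar x}: S_G \to G$ is then a genuine bijection.
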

\begin{proof}
 Fix a basis  $\bar x = (x_1, \ldots,x_n)$ in $G$.   
Then any element $g \in G$  can be written in the canonical form (relative to $\bar x$)

\begin{equation}\label{form2}
g=x_1^{\gamma _1}\ldots x_n^{\gamma_n} \Pi_{1\leq j<i\leq n}[x_i,x_j]^{\beta_{ij}(a_1,\ldots,a_i)},
\end{equation}
where $a_i = x_iG' \in G/G'$, $\beta_{ij}(a_1,\ldots,a_i) \in \Z[a_1,a_1^{-1}, \ldots,a_i,a_i^{-1}]$. Hence $g$ 
can be uniquely represented by a tuple of integers \begin{equation}\label{form1}
t(g) = (\gamma _1,\ldots \gamma_n,\nu(\beta _{21}), \ldots ,\nu(\beta _{n,n-1})),
\end{equation}
which are the coordinates of $g$ in  the normal form relative to the basis $\bar x$ (here $\nu(\beta_{ij})$ is the code of the polynomial $\beta_{ij}$ under the computable bijection $\nu$ from Section \ref{subsec:3.6}).

The set $S_G = \{t(g) \mid g \in G\}$ is clearly definable in $\Z^{n+n(n-1)/2}$. For a tuple $\bar s \in S_G$ denote by $\mu_{\bar x}(\bar s)$ the unique element $g \in G$ with  coordinates $t(g)$ with respect to $\bar x$.  This gives a  bijection $\mu: S_G \to G$. 

Observe that the multiplication and inversion in $G$ (as operations on the canonical forms of elements) are computable, since the word problem in $G$ is decidable.   It follows that the multiplication and inversion in $G$ when elements $g$ of $G$ are given through their codes $\nu_G(g)$ are computable in $\N$, therefore their graphs are computably enumerable, and hence definable in arithmetic $\N$, as well as in $\Z$.  This gives an interpretation $\Gamma$ of $G$ in $\Z$, $G \simeq \Gamma(\Z)$,  with the coordinate map $\mu_{\bar x}$. 
\end{proof}

\begin{theorem}  \label{metab} Every free metabelian group $G$ of finite rank $\geq 2$ is regularly strongly and injectively bi-interpretable  with $\mathbb Z$.
\end{theorem}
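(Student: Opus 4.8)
The plan is to produce the two interpretations explicitly and then verify the clauses of Definitions~\ref{def:reg} and~\ref{def:st_reg} while tracking coordinate maps. For the direction $G$ in $\Z$ I take the absolute, injective interpretation $\Gamma$ of Proposition~\ref{pr:6}, in which an element $h\in G$ is coded by the integer tuple $t(h)=(\gamma_1,\dots,\gamma_n,\nu(\beta_{21}),\dots,\nu(\beta_{n,n-1}))$ of its normal-form coordinates; note that by Proposition~\ref{pr:mult-normal-forms} the multiplication and inversion on normal forms are given by the same $\mathcal F$-functions for \emph{every} basis, so the code $\Gamma$ is genuinely parameter-free, and only its coordinate map $\mu_{\bar x}\colon S_G\to G$ depends on the chosen basis $\bar x$. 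For the direction $\Z$ in $G$ I take the regular, injective interpretation $\Delta=\Z^*_g$ of Section~\ref{subsec:3.5} on the cyclic subgroup $\langle g\rangle$, with coordinate map $\mu_g\colon g^m\mapsto m$, uniform over $\{g\in G\mid g\neq1\}$; to $\Delta$ I attach a basis $\bar x$ of $G$ as an additional inert parameter, so that $\Delta$ has parameter tuple $(\bar x,g)$ ranging over the absolutely definable (Theorem~\ref{th4}(1)) set of pairs consisting of a basis of $G$ and a nontrivial element. Both $\Gamma$ and $\Delta$ being injective and regular, clause~1) of Definition~\ref{def:reg} and the injectivity requirement are in place.

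The substantive step is clause~2) of Definition~\ref{def:reg}: definability in $G$ of a coordinate map for the composite $\Gamma\circ\Delta$, which interprets $G$ in $G$ with parameters $(\bar x,g)$ and dimension $d=n+n(n-1)/2$. By Lemma~\ref{le:int-transitivity}, an element of $U_{\Gamma\circ\Delta}(G,(\bar x,g))$ is a $d$-tuple $(g^{m_1},\dots,g^{m_d})$ of powers of $g$, and a coordinate map to $G$ is $\mu_{\bar x}\circ\mu_g^{\,d}$ restricted to this set, sending such a tuple to the unique $h\in G$ with $t_{\tilde x}(h)=(m_1,\dots,m_d)$. This relation is exactly what the formula $F$ of Theorem~\ref{th4}(2) expresses, so setting $\theta_G(\bar u,x,(\bar x,g)):=F(x,g,\bar x,(u_1,\dots,u_n),(u_{n+1},\dots,u_d))$ furnishes the required formula; the work behind Theorem~\ref{th4}(2) --- Theorem~\ref{th:normal-forms-char}, the $0$-definable $\Z$-exponentiation of Section~\ref{se:3.4}, and the definable $\Z\bar G$-module action on $G'$ of Section~\ref{2.4} --- is already available. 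Symmetrically, for clause~3) the composite $\Delta\circ\Gamma$ interprets $\Z$ in $\Z$ with parameters $\bar{\bar q}=\mu_\Gamma^{-1}(\bar x,g)$ (integer codes of $\bar x$ and $g$), $U_{\Delta\circ\Gamma}(\Z,\bar{\bar q})$ consists of the codes $t(g^m)$ for $m\in\Z$, and the map $t(g^m)\mapsto m$ is definable in $\Z$ because the graph of multiplication of $G$ on normal forms is definable in arithmetic (the word problem in $G$ being decidable), so ``$\bar s=t(g^m)$'' is an arithmetic condition in $\bar s,m$ with parameter $t(g)$; this yields the formula $\theta_\Z(\bar u,x,\bar t)$.

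It remains to upgrade regular bi-interpretability to the strong, injective version. Injectivity of $\Gamma$ and $\Delta$ has been noted. For clause~4) of Definition~\ref{def:st_reg} one checks that for each admissible parameter pair the compatible coordinate maps $(\mu_{\bar x},\mu_g)$ can be chosen so that $\mu_{\bar x}\circ\mu_g$ and $\mu_g\circ\mu_{\bar x}$ are precisely the maps cut out by $\theta_G$ and $\theta_\Z$ above; this is immediate from the construction of $\theta_G$ from $F$ and from the decomposition of composite coordinate maps in Lemma~\ref{le:int-transitivity}(2). I expect the only delicate point to be the bookkeeping --- matching dimensions and parameter tuples through the composites and confirming that the inert basis-parameter attached to $\Delta$ is exactly what lets $\mu_{\bar x}\circ\mu_g$, which a priori needs a basis, be defined with parameters lying in $G$; the genuine mathematical content, recovering an element of $G$ from its normal-form coordinates presented as powers of a single $g$, is already encapsulated in the formula $F$ of Theorem~\ref{th4}. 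Assembling clauses~1)--4) then shows that $G$ and $\Z$ are regularly, strongly, and injectively bi-interpretable, as claimed.
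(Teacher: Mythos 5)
Your proposal follows essentially the same route as the paper's own proof: $\Gamma$ is the absolute injective interpretation from Proposition~\ref{pr:6}, $\Delta$ is the regular injective interpretation $\Z^*_g$ from Section~\ref{subsec:3.5} with the basis $\bar x$ added as an inert parameter (regularity then coming from Theorem~\ref{th4}(1)), the formula $F$ of Theorem~\ref{th4}(2) defines the coordinate map of $\Gamma\circ\Delta$ in $G$, and the coordinate map of $\Delta\circ\Gamma$ in $\Z$ is definable via computability of the coded multiplication. The bookkeeping you flag (dimensions, parameter tuples, clause~4 of Definition~\ref{def:st_reg}) is handled the same way in the paper, and your use of $\Delta\circ\Gamma$ for the $\Z$-to-$\Z$ composite corrects a small notational slip in the paper's text.
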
 
\begin{proof}
    By Proposition \ref{pr:6} $G$ is absolutely and injectively interpretable in $\Z$ by some code $\Gamma$, i. e., $G \simeq \Gamma(\Z)$, and with the coordinate map $\mu_x: \Gamma(\Z) \to G$, which depends on a choice of a basis $x$ of $G$. On the other hand $\Z$ is injectively and regularly interpretable as $\Z_g^* = \langle \langle g \rangle; +_g,\circ_g\rangle$, with a parameter $g \in G, g \neq 1$, and the coordinate map $\mu_g: g^m \to m$ (see Section \ref{subsec:3.5}). This gives a regular injective interpretation $\Z \simeq \Delta(G,g)$. 
It follows that $G \simeq \Gamma\circ\Delta(G,g)$ and the coordinate map $\Gamma\circ\Delta(G,g) \to G$ is precisely the map defined by the formula $F(u,g,x, \bar y, \bar z)$ where $g\in G, g \neq 1,$ $x$ is a tuple of parameters (a basis of $G$) that occur in the formula $F(u,w,\bar v, \bar y, \bar z)$  from Theorem \ref{th4} when $g$ is substituted for $w$ and $x$ is substituted for variables $\bar v$. However, the parameters $x$ are not part of the interpretation $\Z \simeq \Delta(G,g)$, to fix this we add $x$ to the set of parameters in $\Delta$, so now $\Z \simeq \Delta(G,g,x)$. Note that the parameters $x$ do not occur in any formulas in $\Delta$ (this is allowed).  By  Theorem \ref{th4} the set of all bases of $G$ is absolutely definable in $G$ , so the interpretation $\Z \simeq \Delta(G,g,x)$ is regular and injective, and the formula $F(u,g,x, \bar y, \bar z)$ defines the coordinate map  $\Gamma\circ\Delta(G,g) \to G$ for parameters $g,x$. In the other direction we have $\Z \simeq \Gamma \circ \Delta(\Z)$, and the corresponding coordinate map $\Gamma \circ \Delta(\Z) \to \Z$ is defined by $(g^*)^m \to m$, where $g^*$ is the image of $g$ under the isomorphism $\mu_x^{-1}:G \to \Gamma(Z)$. The parameter $g^*$ is part of the interpretation $\Z_g \simeq \Delta(\Gamma(\Z),g^*)$. Since $\Gamma(\Z)$ is computable the function $(g^*)^m \to m$ is computable in $\Z$ , hence definable in $\Z$ with parameter $g^*$. 
This proves the theorem.
    
\end{proof}
\begin{cor} Every free metabelian group of finite rank $\geq 2$ has uniform elimination of imaginaries with parameters.
    \end{cor}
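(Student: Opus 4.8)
The plan is to derive the corollary immediately from Theorem~\ref{metab} together with the transfer result for elimination of imaginaries quoted above from~\cite{DM1} (the unnumbered theorem stating that if $\MA$ and $\MB$ are strongly bi-interpretable with parameters, $\Delta$ injective, and $\MB$ has uniform elimination of imaginaries with parameters, then so does $\MA$). First I would record that a regular bi-interpretation, specialized to any fixed admissible tuple of parameters, is in particular a strong bi-interpretation with parameters in the sense of Definition~\ref{def:bi}; so Theorem~\ref{metab} gives interpretations $G \simeq \Gamma(\Z,\bar p)$ and $\Z \simeq \Delta(G,\bar q)$. By construction the interpretation $\Gamma$ of $G$ in $\Z$ (Proposition~\ref{pr:6}) and the interpretation $\Delta$ of $\Z$ in $G$ (Section~\ref{subsec:3.5}) are both injective, so in particular $\Delta$ is injective, which is exactly the hypothesis needed in the $\Z$-to-$G$ direction of the transfer theorem.

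Next I would invoke the observation, recorded immediately after that theorem in the excerpt, that $\Z$ has uniform elimination of imaginaries (with and without parameters). Applying the transfer theorem with $\MA = G$ and $\MB = \Z$ then yields that $G$ has uniform elimination of imaginaries with parameters, which is precisely the assertion of the corollary.

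I do not anticipate any genuine obstacle here, since the substantive work is entirely contained in Theorem~\ref{metab}; the one point deserving a line of care is the bookkeeping of parameters. The interpretation $\Delta$ of $\Z$ in $G$ carries the parameter $g \neq 1$, and, as arranged in the proof of Theorem~\ref{metab}, also the basis $x$ (so that the composition $\Gamma\circ\Delta$ has a definable coordinate map via the formula $F$ of Theorem~\ref{th4}); one must check that these are honest elements of $G$ rather than imaginaries and that injectivity of $\Delta$ survives adjoining $x$ to its parameter list, both of which hold. With that in place, the corollary follows.
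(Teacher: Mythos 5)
Your proposal is correct and matches the paper's (implicit) argument: specialize the regular strong injective bi-interpretation of Theorem~\ref{metab} to a strong bi-interpretation with parameters with $\Delta$ injective, then apply the quoted transfer theorem from \cite{DM1} together with the remark that $\Z$ has uniform elimination of imaginaries with parameters. The extra parameter bookkeeping you flag is already handled in the proof of Theorem~\ref{metab} and introduces no new difficulty.
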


\section{Groups elementarily equivalent to a free \\metabelian group} 
In this section we will describe groups elementarily equivalent to a free metabelian group $G$ of rank $n \geq 2$ with basis $x_1,\ldots ,x_n.$

Since $G$ is regularly bi-interpretable  with $\Z$, we can use Theorem \ref{th:main} with $G=\MA$ and $\Z=\MB .$
Then in the notation above  $G=\Gamma (\Z )$ and $\Z=\Delta (G,\bar g).$
If $H\equiv G$, then the same formulas give the interpretation $H=\Gamma (\tilde \Z),$ where $\tilde \Z\equiv\Z$. 
We call $\tilde\N$, a structure elementarily equivalent to $\N$ a model of arithmetic. Notice that $\N$ and $\Z$ are absolutely bi-interprtable. A ring $\tilde\Z$ (resp.  $\tilde\N$) is called a non-standard model if it is not isomorphic to $\Z$ (resp. $\N$), see \cite{Kaye} and \cite{MN}.

First, we notice that $H$ is a so-called exponential group with exponents in $\tilde\Z$. Let us recall the four axioms of exponential groups from \cite{MR1}.
Let $A$ be an arbitrary associative ring with identity and $\Gamma$ a group.
Fix an action of the ring $A$ on $\Gamma$, i.e. a map $\Gamma \times A \rightarrow
\Gamma$. The result of the action of $\alpha \in A$ on $g \in \Gamma$ is written
as $g^\alpha$. Consider the following axioms:

\begin{enumerate}
\item $g^1=g$, $g^0=1$,  $1^\alpha = 1$ ;
\item $g^{\alpha +\beta}=g^\alpha \cdot g^\beta, \ g^{\alpha \beta} =
(g^\alpha)^\beta$;
\item $(h^{-1}gh)^\alpha = h^{-1}g^\alpha h;$
\item $[g,h]=1 \Longrightarrow (gh)^\alpha = g^\alpha h^\alpha.$
\end{enumerate}

\begin{definition}
Groups with $A$-actions satisfying axioms 1)--4) are called {\em $A$-- exponential groups}. 
\end{definition}

These axioms can be written by first-order formulas in $G$ and $H$. This implies the following lemma.

\begin{lemma} The group $H$ is $\tilde Z$-exponential group.

\end{lemma}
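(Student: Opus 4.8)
The plan is to deduce the lemma from the explicit first-order definition of $\Z$-exponentiation on $G$ built in Section~\ref{se:3.4}, together with $H\equiv G$ and the bi-interpretation of Theorem~\ref{metab}. Recall that Section~\ref{se:3.4} produced a formula $\exp(u,v,\bar x)$ of the group language, with $\bar x$ a tuple of variables ranging over the interpreted copy of $\Z$ (either $\Z^{*}$ of Section~\ref{se:3.2} or, via the definable isomorphism $\lambda_{g}$, the parametric interpretation $\Z^{*}_{g}$ of Section~\ref{subsec:3.5}), such that $G\models\exp(g,h,m^{*})$ iff $g=h^{m}$ for $g,h\in G$, $m\in\Z$. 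In that section it was also shown that this relation is \emph{total} (for every base $u$ and every scalar $\alpha$ in the interpreted ring there is some $v$ with ``$v=u^{\alpha}$'') and \emph{single valued} (uniqueness from cyclicity of centralizers in Case~1 and from Lemma~\ref{le:identity} in Case~2), so $\exp$ defines uniformly an action of the interpreted ring on $G$, coinciding on standard exponents with the usual power map $m\mapsto u^{m}$.

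First I would write the four axioms of an $A$-exponential group, with $A$ the ring interpreted in $G$, as first-order sentences $\sigma_{1},\ldots,\sigma_{4}$ of the group language, using $\exp$ in place of exponentiation and the definable operations $+,\circ$, constants $0,1$ and domain predicate of the interpreted ring in place of the ring structure. Thus $\sigma_{2}$ says: for all $u,v,w$ and all scalars $\alpha,\beta$, if $v=u^{\alpha}$ and $w=u^{\beta}$ then $vw=u^{\alpha+\beta}$, and if $v=u^{\alpha}$ and $w=v^{\beta}$ then $w=u^{\alpha\circ\beta}$ (each clause ``$\cdot=u^{\gamma}$'' abbreviating the relevant instance of $\exp$); $\sigma_{1}$ says $u^{1}=u$, $u^{0}=1$, $1^{\alpha}=1$; $\sigma_{3}$ says $(h^{-1}uh)^{\alpha}=h^{-1}u^{\alpha}h$; $\sigma_{4}$ says $[u,h]=1\to(uh)^{\alpha}=u^{\alpha}h^{\alpha}$; and I add the sentence $\tau$ that $\exp$ defines a total single-valued function on $G$ times the interpreted ring. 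Then I would check $G\models\tau\wedge\sigma_{1}\wedge\ldots\wedge\sigma_{4}$: $\tau$ is exactly what was verified in Section~\ref{se:3.4}, and since on standard integers the action is ordinary exponentiation, each $\sigma_{i}$ reduces to a power/commutator identity valid in every group ($1^{m}=1$, $u^{m+k}=u^{m}u^{k}$, $u^{mk}=(u^{m})^{k}$, $(h^{-1}uh)^{m}=h^{-1}u^{m}h$, and $(uh)^{m}=u^{m}h^{m}$ when $u,h$ commute).

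Finally, since $H\equiv G$, the sentences $\tau,\sigma_{1},\ldots,\sigma_{4}$ hold in $H$. By Theorem~\ref{metab}, $H\cong\Gamma(\tilde{\Z})$ with $\tilde{\Z}\equiv\Z$, and the copy of $\Z$ interpreted inside $H$ by the formulas occurring in $\exp$ is isomorphic to this $\tilde{\Z}$: in $G$ the interpretations $\Z^{*}$ and $\Z^{*}_{g}$ are definably isomorphic by the construction of Section~\ref{subsec:3.5}, and $\Delta\circ\Gamma(\tilde{\Z})\cong\tilde{\Z}$ by the bi-interpretation, so the identification passes to $H$. Then $\tau$ in $H$ says that $\exp$ defines a genuine map $H\times\tilde{\Z}\to H$, $(u,\alpha)\mapsto u^{\alpha}$, while $\sigma_{1},\ldots,\sigma_{4}$ say precisely that this map satisfies axioms 1)--4). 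Hence $H$ is a $\tilde{\Z}$-exponential group.

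There is no genuinely new difficulty: the weight is carried by Sections~\ref{se:3.4}--\ref{subsec:3.6} (definability of the $\Z$-action) and Theorem~\ref{metab}. The only points needing care are the bookkeeping of turning each axiom into a single first-order sentence over $G$ via the interpreting formulas, and the identification in $H$ of the interpreted copy of $\Z$ with the exponent ring $\tilde{\Z}$ of $H=\Gamma(\tilde{\Z})$, which is where regularity of the bi-interpretation matters.
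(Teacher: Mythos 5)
Your proposal is correct and follows essentially the same route the paper takes: the paper's proof is the single remark ``These axioms can be written by first-order formulas in $G$ and $H$. This implies the following lemma,'' which is exactly your argument of encoding axioms 1)--4) (together with totality and single-valuedness of the $\exp$ relation) as group-language sentences via $\exp(u,v,\bar x)$ from Section~\ref{se:3.4}, verifying them in $G$ where the action is ordinary power exponentiation, and transporting them to $H$ by $H\equiv G$. Your write-up is considerably more careful than the paper's, in particular in making explicit the identification of the interpreted copy of $\Z$ inside $H$ with the exponent ring $\tilde\Z$ of $H\simeq\Gamma(\tilde\Z)$, a bookkeeping point the paper leaves implicit.
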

Our main goal now is to describe the structure of $H$. We know that $G$ can be represented as a pair $\Z\bar G$ and a module $G'_{\Z \bar G}$ with the action of $\Z\bar G$ on $G'_{\Z \bar G}$ interpretable  in $G$  by Section \ref{2.4}.



We have $G\rightarrow _{\Gamma}\Z\rightarrow _{\Delta}G,$ where the interpretation $\Gamma (\Z)$ is via normal forms, therefore  $H\rightarrow _{\Gamma}\tilde\Z\rightarrow _{\Delta}H.$ The element  $g=x_1^{\gamma _1}\ldots x_n^{\gamma_n}u\in G$, where

$$
u = \Pi_{1\leq j<i\leq n}[x_i,x_j]^{\beta_{ij}(a_1,\ldots,a_i)},
$$
where $\beta_{ij}(a_1,\ldots,a_i) \in \Z[a_1,a_1^{-1}, \ldots,a_i,a_i^{-1}] \leq \Z \bar G$, is interpreted as a tuple of elements in $\Z$, $g\rightarrow (\gamma _i,\ldots ,\gamma _n, \bar\beta _{11},\ldots ,\bar\beta _{n-1,n}),$ where $\beta _{ij}$ are tuples.

 For $H$ instead of $\Z\bar G$ we will have  a non standard Laurent polynomial ring  $\tilde\Z\bar G_{NS}$ described in \cite{MN}.  This is a ring elementarily equivalent to $\Z\bar G$. More precisely, regular bi-interpretation of $G$ with $\Z$ induces the regular bi-interpretation of  $\Z\bar G$  with $\Z$, $\Z\bar G=\Gamma _1(\Z)$. Then $\tilde\Z\bar G_{NS}= \Gamma _1(\tilde\Z).$
  The same formula as in the standard case says that for $h\in H$, 
\begin{equation}\label{NF} h=x_1^{\tilde\gamma _1}\ldots x_n^{\tilde\gamma_n}u,\\ \ \ \ 
u = \Pi_{1\leq j<i\leq n}[x_i,x_j]^{\beta_{ij}(a_1,\ldots,a_i)},
\end{equation}
where $\tilde\gamma_i\in\tilde\Z ,$ $\beta_{ij}(a_1,\ldots,a_i) \in \tilde\Z \bar G_{NS}$.   It is interpreted as a non-standard tuple of tuples of elements in $\tilde\Z$, $$g\rightarrow (\tilde\gamma _i,\ldots ,\tilde\gamma _n, \bar\beta _{11},\ldots ,\bar\beta _{n-1,n} ).$$

There is a formula connecting $P[a_1,\ldots ,a_n]\in  \tilde\Z \bar G_{NS}$  and its evaluation $P[\alpha _1,\ldots ,\alpha _n]$, where $\alpha _1,\ldots ,\alpha _n\in \tilde\Z$.
Lemma \ref{le:6.3} gives the interpretation of the action of the standard Laurent polynomial ring $\Z\bar G$ on $G'$ and, therefore, the interpretation of the action of the non-standard Laurent polynomial ring  $\tilde\Z\bar G_{NS}$ on $H'$, where $H'$ is the $\tilde\Z\bar G_{NS}$ module generated by $\{[x_i,x_j]\}$.

Lemma \ref{le:power-in-comm} implies the following.
\begin{lemma} \label{L6} For any $x,z\in H$ and $\delta, \gamma\in\tilde{\mathbb Z},$ $$[z^\gamma,x^{\delta}]=[z,x]^{(\frac{{\bar z}^{\gamma}-1}{\bar z -1}) (\frac{{\bar x}^{\delta}-1}{\barx -1})},$$
where  $(\bar x^{\delta}-1)/(\bar x-1), (\bar z^{\gamma}-1)/(\bar z-1)\in \tilde\Z \bar G_{NS}$ .

\end{lemma}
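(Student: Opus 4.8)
The plan is to establish the stated identity by transferring Lemma \ref{le:power-in-comm} across the elementary equivalence $H\equiv G$, using that every ingredient of the statement is interpretable in $G$ by a fixed code. First I would recall that, by Theorem \ref{metab} and Theorem \ref{th:main}, $H$ arises from the \emph{same} interpretation data as $G$: the formula $\exp(u,v,\bar x)$ of Section \ref{se:3.4} defines on $H$ an exponentiation by the interpreted ring, which over $H$ is the non-standard model $\tilde\Z$, and the formula $E(x,y,\bar u,\bar v)$ of Lemma \ref{le:action}, via the interpretation of Section \ref{subsec:3.6}, defines on $H'$ the action of the non-standard Laurent polynomial ring $\tilde\Z\bar G_{NS}$ of \cite{MN}. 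Thus the two-sorted structure built from $H$, $\tilde\Z$, $\tilde\Z\bar G_{NS}$, the group $H/H'\le\tilde\Z\bar G_{NS}$, the $\tilde\Z$-powers $\bar z^\gamma$, and the $\tilde\Z\bar G_{NS}$-action on $H'$, is the elementary analogue of the corresponding structure over $G$, $\Z$, and $\Z\bar G$; and since the set of bases is absolutely definable (Theorem \ref{th4}), the sentence to be transferred can be taken parameter-free, quantifying over all bases.

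Next I would express Lemma \ref{le:power-in-comm}(2) as a single first-order sentence $\Phi$ over $G$ in this interpreted language. The only delicate point is the quotient $\frac{\bar z^\gamma-1}{\bar z-1}$: I would phrase $\Phi$ so that it speaks of \emph{the unique} element $w$ of the interpreted ring with $(\bar z-1)\cdot w=\bar z^\gamma-1$, uniqueness being guaranteed because $\Z\bar G$ is an integral domain and $\bar z\neq1$ for $z\notin G'$, and existence being the content of the Remark preceding Lemma \ref{le:general-Jacobi}; these facts, together with ``integral domain'', are first-order in the interpreted ring. Since $G\models\Phi$ by Lemma \ref{le:power-in-comm}, also $H\models\Phi$. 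Reading $\Phi$ off in $H$, the divisibility and uniqueness clauses now hold in $\tilde\Z\bar G_{NS}$, which is again an integral domain and satisfies $\bar x,\bar z\neq1$ for $x,z\notin H'$; hence $(\bar z^\gamma-1)/(\bar z-1)$ and $(\bar x^\delta-1)/(\bar x-1)$ are genuine elements of $\tilde\Z\bar G_{NS}$ and the conclusion of $\Phi$ is precisely the displayed formula. As in Lemma \ref{le:power-in-comm}, it suffices to treat $x,z\notin H'$, the remaining cases being immediate from the module action on $H'$.

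An essentially equivalent, internal route is to rerun the proof of Lemma \ref{le:power-in-comm} inside $H$: since being metabelian is first-order, $H$ is metabelian and satisfies the Jacobi identity \eqref{eq:Jacobi}, which gives $[z,x^\delta]^{\bar x-1}=[z,x]^{\bar x^\delta-1}$ in $H'$ regarded as a $\tilde\Z\bar G_{NS}$-module; one then cancels $\bar x-1$ using that $H'$ is torsion-free over $\tilde\Z\bar G_{NS}$ (the elementary counterpart of Proposition \ref{nf}), and applies part~1) twice. The step I expect to be the main obstacle is the bookkeeping around $\frac{\bar z^\gamma-1}{\bar z-1}$ for non-standard $\gamma$: over $\tilde\Z\bar G_{NS}$ the ``number of summands'' in the geometric series is non-standard, so this element must be pinned down purely by its first-order divisibility-and-uniqueness characterization and shown to behave correctly under $\equiv$, together with the evaluation formula relating $P[a_1,\dots,a_n]\in\tilde\Z\bar G_{NS}$ to its values $P[\alpha_1,\dots,\alpha_n]$ mentioned just before the lemma.
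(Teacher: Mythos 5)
Your proposal is correct and matches the paper's (very terse) argument: the paper simply states that Lemma \ref{le:power-in-comm} implies Lemma \ref{L6}, i.e.\ the identity transfers from $G$ to $H\equiv G$ because it is expressible as a first-order sentence over the interpreted two-sorted structure, which is exactly what you spell out. Your attention to the first-order characterization of the quotient $(\bar z^\gamma-1)/(\bar z-1)$ via divisibility and uniqueness in an integral domain, and your remark that the statement really concerns $x,z\notin H'$, are the right points of care that the paper leaves implicit.
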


 Denote by $a_i$ the image of $x_i\in H$ in $\tilde\Z \bar G_{NS}$. 

\begin{theorem} If $H$ is a group elementarily equivalent to $G$. Then $H$ has the following structure. \begin{enumerate} \item Elements $h\in H$ have the normal form  
$$ h=x_1^{\tilde\gamma _1}\ldots x_n^{\tilde\gamma_n}u,\\ \ \ \ 
u = \Pi_{1\leq j<i\leq n}[x_i,x_j]^{\beta_{ij}(a_1,\ldots,a_i)},$$
where $\tilde\gamma_i\in\tilde\Z ,$ $\beta_{ij}(a_1,\ldots,a_i) \in \tilde\Z \bar G_{NS}$.   
\item $H'$ is a module over $\tilde\Z\bar G_{NS}$ with generators $\{[x_i,x_j]\}.$
\item Multiplication in $H$ is defined as follows. if  $g, h \in H$ are given by their normal forms
$$
g = x_1^{\tilde\gamma _1}\ldots x_n^{\tilde\gamma_n}\Pi_{1\leq j<i\leq n}[x_i,x_j]^{\beta_{ij}(a_1,\ldots,a_i)}
$$
and 
$$
h= x_1^{\tilde\delta _1}\ldots x_n^{\tilde\delta_n}\Pi_{1\leq j<i\leq n}[x_i,x_j]^{\nu_{ij}(a_1,\ldots,a_i)}.
$$
Then
\begin{equation} \label{eq:almost-normalNS}
    gh = x_1^{\tilde\gamma _1+\tilde\delta _1}\ldots x_n^{\tilde\gamma_n+\tilde\delta _n} \Pi_{1\leq j<i\leq n}[x_i,x_j]^{f_{ij}(t(g),t(h))}.
\end{equation}
where $f_{ij}\in\mathcal F$ are non-standard functions corresponding to functions from Proposition \ref{pr:mult-normal-forms}, $t(g)$ and $t(h)$ are coordinates in $\tilde\Z\bar G_{NS}$ of elements $g,h \in H$ in the base $x\cdot y$. 

\end{enumerate}
\end{theorem}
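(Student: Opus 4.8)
The plan is to deduce the structure of $H$ entirely from the regular bi-interpretability of $G$ with $\Z$ (Theorem~\ref{metab}) together with the transfer principle of Theorem~\ref{th:main}. First I would invoke Theorem~\ref{th:main}(2): since $H \equiv G$ and $G \simeq \Gamma(\Z)$ for the absolute injective code $\Gamma$ of Proposition~\ref{pr:6}, we get $H \cong \Gamma(\tilde\Z)$ for some $\tilde\Z \equiv \Z$. Recall that, relative to a fixed basis $\bar x = (x_1,\dots,x_n)$ of $G$, the code $\Gamma$ encodes an element $g = x^\gamma c_x^\beta$ by the integer tuple $t(g) = (\gamma_1,\dots,\gamma_n,\nu(\beta_{21}),\dots,\nu(\beta_{n,n-1}))$, the set $S_G$ of admissible codes is $0$-definable in $\Z$, and the group operations act on codes by $0$-definable functions. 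Consequently $H = \Gamma(\tilde\Z)$ consists of the tuples from $\tilde\Z$ satisfying the same defining formula, equipped with the same operations, and I would single out in $H$ the tuple $x = (x_1,\dots,x_n)$ given by the images under the coordinate map of the parameter-free codes of the standard basis of $G$; since Theorem~\ref{th4}(1) makes the set of all bases absolutely definable, the sentence saying that $x$ belongs to that set transfers from $G$ to $H$.

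For part (1), the combination of Theorem~\ref{th:normal-forms-char} and Theorem~\ref{th4}(2) is a first-order sentence of the group language asserting that, relative to a basis, every element has a unique normal form $x^\gamma c_x^\beta$ with $\gamma \in \Z^n$ and the $\beta_{ij}$ lying in the interpreted copy of $\Z[a_1^{\pm1},\dots,a_i^{\pm1}]$; being true in $G$, it holds in $H$. The interpreted copy of the exponent ring inside $H$ is by construction $\Gamma_1(\tilde\Z)$, which is what we call $\tilde\Z\bar G_{NS}$, where $\Gamma_1$ realizes the regular bi-interpretation $\Z\bar G = \Gamma_1(\Z)$ induced by the bi-interpretation of $G$ with $\Z$ (see~\cite{MN}). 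This yields the normal form~(\ref{NF}) with $\tilde\gamma_i \in \tilde\Z$ and $\beta_{ij} \in \tilde\Z\bar G_{NS}$, the powers $x_i^{\tilde\gamma_i}$ being the $\tilde\Z$-exponentiation defined by the formula $\exp(u,v,\bar x)$ of Section~\ref{se:3.4}, using that $H$ is a $\tilde\Z$-exponential group. For part (2), the action of $\Z\bar G$ on $G'$ is interpretable in $G$ (Section~\ref{2.4}, Lemma~\ref{le:action}), and the sentence stating both the module axioms and the uniqueness of collected forms $\prod[x_i,x_j]^{\beta_{ij}(a_1,\dots,a_i)}$ (Propositions~\ref{nf-b},~\ref{nf-2}) is first-order and true in $G$, hence in $H$; so $H'$ is a $\tilde\Z\bar G_{NS}$-module with generating set $\{[x_i,x_j]\}$, the action being the one defined by the formula $E$ (or $D$) of Lemma~\ref{le:action}.

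For part (3), Proposition~\ref{pr:mult-normal-forms} expresses the multiplication on normal forms by the functions $f_{ij}\in\mathcal F$, uniformly in the basis; writing each $f_{ij}$ in the form~(\ref{eq:f-p}) reduces definability of its graph to definability of the graph of each $\varepsilon_\ell\colon z \mapsto \frac{a_\ell^z-1}{a_\ell-1}$. This last point is precisely the formula connecting a Laurent polynomial $P(a_1,\dots,a_n)$ with its evaluations $P(\alpha_1,\dots,\alpha_n)$ for $\alpha_i \in \tilde\Z$, in the spirit of Lemma~\ref{le:6.3} and the remark in Section~\ref{subsec:2.5}. Hence the identity~(\ref{eq:normal}) of Proposition~\ref{pr:mult-normal-forms} is a first-order sentence, true in $G$ and therefore in $H$, which gives~(\ref{eq:almost-normalNS}) with the non-standard functions $f_{ij}$.

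The step I expect to be the main obstacle is part (3): one must verify that every function $f_{ij} \in \mathcal F$ — and especially the maps $\varepsilon_\ell$, which involve raising the ring element $a_\ell$ to a non-standard integer power and thus have no direct meaning in $\tilde\Z\bar G_{NS}$ — has a genuinely first-order definable graph, so that Proposition~\ref{pr:mult-normal-forms} transfers verbatim; this forces one to route $\varepsilon_\ell$ through the polynomial-evaluation formula rather than defining it directly. A secondary but delicate point throughout is the bookkeeping of the transfer: one has to identify, consistently, the distinguished tuple $x$ in $H$, the exponent ring $\tilde\Z\bar G_{NS} = \Gamma_1(\tilde\Z)$, the derived subgroup $H'$, and the $\tilde\Z\bar G_{NS}$-action on it, so that the formulas $F$, $E$, $D$, and $\exp$ all speak about the same interpreted copies.
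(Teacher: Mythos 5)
Your proposal is correct and takes essentially the same route as the paper: invoke Theorem~\ref{th:main} to get $H \cong \Gamma(\tilde\Z)$, identify the non-standard exponent ring $\tilde\Z\bar G_{NS} = \Gamma_1(\tilde\Z)$ from the induced bi-interpretation, and transfer the first-order statements of Theorem~\ref{th:normal-forms-char}, Lemma~\ref{le:action}, and Proposition~\ref{pr:mult-normal-forms} to $H$. The paper's own proof is the one-line observation that the functions $\varepsilon_\ell$ and all of $\mathcal F$ are defined in $\tilde\Z\bar G_{NS}$ by the same formulas as in $\Z\bar G$, which is exactly the point you correctly flag as the crux and resolve by routing $\varepsilon_\ell$ through the polynomial-evaluation formula; your write-up simply makes explicit the bookkeeping the paper leaves implicit.
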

\begin{proof} The functions 
$\varepsilon_\ell(z)$ and all functions in  $\mathcal F$  are defined in  the ring $\tilde\Z\bar G_{NS}$ by the same formulas as in $\mathbb Z\bar G.$ 
\end{proof}
\begin{lemma} If $x,y\in H-H'$ and $b$ is the image of $y$ in $\tilde Z\bar G_{NS}$, then we have the formula for the $\delta$-commutator $$y^{-\delta}x^{-\delta}(xy)^{\delta}=[x,y]^{-f(a,b)},$$ where $f(a,b)$ is a non-standard polynomial such that $$(a-1)f(a,b)=$$ $$(a^{\delta}b^{\delta}-1)/(ab-1)+(1-b^{\delta})/(b-1)=
 b^{\delta -1}(a^{\delta -1}-1)+b^{\delta -2}(a^{\delta -2}-1)+\ldots +b(a-1).$$
    
\end{lemma}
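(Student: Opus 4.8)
The plan is to prove the identity first in the free metabelian group $G$ for every ordinary integer $\delta$, to phrase it as a single first-order sentence, and then to invoke $H\equiv G$ as in Theorem~\ref{th:main}; the universal quantifier over $\delta$ then automatically covers the non-standard exponents in $\tilde\Z$. Throughout set $a=\bar x$, $b=\bar y$, viewed as elements of $\Z\bar G$ (resp.\ of $\tilde\Z\bar G_{NS}$ in $H$), and recall that $[x,y]^{-f(a,b)}=[y,x]^{f(a,b)}$, so that it suffices to produce, for each $\delta$, a coefficient $f(a,b)$ with $(xy)^\delta=x^\delta y^\delta\,[y,x]^{f(a,b)}$ and $(a-1)f(a,b)=b^{\delta-1}(a^{\delta-1}-1)+\dots+b(a-1)$.

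For the computation in $G$ I would collect the occurrences of $x$ to the left in $(xy)^{\delta+1}=(xy)^{\delta}\cdot xy$. Using the commutator identities \eqref{eq:comm-identities-1}--\eqref{eq:comm-identities-2}, the fact that conjugation by $x$ (resp.\ $y$) acts on $G'$ as multiplication by $a$ (resp.\ $b$), and Lemma~\ref{le:power-in-comm} in the form $[y^{\delta},x]=[y,x]^{(b^{\delta}-1)/(b-1)}$, one shows inductively that $(xy)^{\delta}=x^{\delta}y^{\delta}[y,x]^{g_{\delta}}$ with $g_{\delta}\in\Z\bar G$ satisfying
\[
g_{\delta+1}=\frac{b^{\delta+1}-b}{b-1}+ab\,g_{\delta},\qquad g_{0}=0 .
\]
Since $ab$ is a unit of $\Z\bar G$, this recursion determines $g_{\delta}$ for every $\delta\in\Z$ (solving upward for $\delta>0$, downward for $\delta<0$). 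A short induction in both directions from $\delta=0$ then gives $(a-1)g_{\delta}=b^{\delta-1}(a^{\delta-1}-1)+\dots+b(a-1)=\bigl((ab)^{\delta}-1\bigr)/(ab-1)+\bigl(1-b^{\delta}\bigr)/(b-1)$, the inductive step being the routine rearrangement $ab\sum_{j=1}^{\delta-2}b^{j}(a^{j}-1)+(a-1)\frac{b^{\delta}-b}{b-1}=\sum_{j=1}^{\delta-1}b^{j}(a^{j}-1)$. As $\Z\bar G$ is a Laurent polynomial ring, hence a domain, and $x\notin G'$ forces $a\neq1$, the element $a-1$ is not a zero divisor, so $g_{\delta}$ coincides with the element $f(a,b)$ in the statement, and the identity holds in $G$.

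To transfer, I would write the assertion ``for all $\delta$ and all $x,y$ outside the commutant there is $w$ in the coefficient ring with $(a-1)w$ equal to the stated polynomial and $(xy)^{\delta}=x^{\delta}y^{\delta}[y,x]^{w}$'' as a first-order sentence, using the ingredients already assembled: the $\tilde\Z$-exponentiation on $H$ (the formula $\exp$ of Section~\ref{se:3.4}, valid in $H$ by elementary equivalence), the interpretation of the ring $\tilde\Z\bar G_{NS}$ and of its action on $H'$ (Lemma~\ref{le:6.3}, Section~\ref{2.4}), and the definable correspondence between a non-standard Laurent polynomial $P[a_{1},\dots,a_{n}]$ and its evaluations $P[\alpha_{1},\dots,\alpha_{n}]$, which lets one name the element $(a^{\delta}b^{\delta}-1)/(ab-1)+(1-b^{\delta})/(b-1)$ from $\delta,a,b$ and express its divisibility by $a-1$. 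This sentence holds in $G$ by the previous paragraph, hence in $H$, and applied in $H$ it yields $y^{-\delta}x^{-\delta}(xy)^{\delta}=[y,x]^{w}=[x,y]^{-f(a,b)}$ for every $\delta\in\tilde\Z$. The collection computation could equally be carried out verbatim inside $H$ using \eqref{eq:almost-normalNS} and Lemma~\ref{L6}, but identifying the resulting coefficient with $f(a,b)$ still requires transferring the polynomial identity $(a-1)g_{\delta}=\dots$ from $G$.

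The main obstacle is not the algebra but ensuring that ``$f(a,b)$'' denotes a well-defined, uniquely determined element of $\tilde\Z\bar G_{NS}$ and that the whole description is uniform in $\delta,x,y$. Divisibility of the right-hand side by $a-1$ is a polynomial identity holding in $\Z\bar G$ (it vanishes under $a\mapsto1$) and hence transfers; uniqueness of the quotient follows from $\tilde\Z\bar G_{NS}$ being a domain (it is elementarily equivalent to the domain $\Z\bar G$) together with $a\neq1$. A minor point to settle is the meaning of the truncated sum $b^{\delta-1}(a^{\delta-1}-1)+\dots+b(a-1)$ and of the quotient $(a^{\delta}b^{\delta}-1)/(ab-1)$ for $\delta\le0$ or $ab=1$: one reads these through the convention of the Remark preceding Lemma~\ref{le:general-Jacobi}, so that $(\bar g^{\delta}-1)/(\bar g-1)$ is the unique element of $\Z\bar G$ with the expected property whenever $\bar g\neq1$, and this is consistent with the recursion above in both directions.
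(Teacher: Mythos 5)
Your proof is correct but follows a genuinely different computational route than the paper's. The paper works \emph{directly in $H$} with a non-standard exponent $\delta$: it observes that $y^{-\delta}x^{-\delta}(xy)^\delta$ lies in the cyclic submodule generated by $[x,y]$, applies the bracket $[x,\cdot\,]$ to this element (which multiplies the exponent by $a-1$), and then expands $[x,y^{-\delta}x^{-\delta}(xy)^\delta]$ using the commutator identities \eqref{eq:comm-identities-2} and Lemma~\ref{L6} to read off the exponent $\frac{(ab)^\delta-1}{ab-1}+\frac{1-b^\delta}{b-1}$ in one short calculation; the $(a-1)$ factor thus appears automatically, and the only thing transferred from the standard case is divisibility of that polynomial by $a-1$. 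You instead derive, inside the standard group $G$ with a genuine integer $\delta$, a recursion $g_{\delta+1}=\frac{b^{\delta+1}-b}{b-1}+ab\,g_\delta$ by collecting $x$'s to the left, solve it, verify $(a-1)g_\delta$ equals the stated sum by induction (your index in the inductive-step display is off by one, $\delta-1$ rather than $\delta-2$, but the computation is sound), and then transfer the \emph{entire} identity to $H$ via a first-order sentence built from the paper's interpretations. Your approach has the virtue of explaining where $f(a,b)$ comes from constructively (the telescoping of the collection process), but it needs the heavier transfer infrastructure, including the definable correspondence between a non-standard Laurent polynomial and its evaluations, and the solvability of the recursion for negative and non-standard $\delta$ is handled only indirectly through that transfer; the paper's bracket trick avoids all of this because Lemma~\ref{L6} is already stated for non-standard exponents. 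You should double-check the sign conventions carefully: with $[x,c]=c^{1-a}$ for $c\in G'$, the paper's equality $[x,y^{-\delta}x^{-\delta}(xy)^\delta]=[x,y]^{f(a,b)(a-1)}$ is consistent with $y^{-\delta}x^{-\delta}(xy)^\delta=[x,y]^{-f(a,b)}$; your $g_\delta$ sitting in $[y,x]^{g_\delta}=[x,y]^{-g_\delta}$ matches this, so you are fine, but it is the kind of thing worth stating explicitly.
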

\begin{proof}
 We can write  that   $y^{-\delta}x^{-\delta}(xy)^{\delta}$ is in the module generated by the  commutator $[x,y]$, and we wish to see what is the non-standard polynomial $f(a,b).$

We have $[x,y^{-\delta}x^{-\delta}(xy)^{\delta}]=[x,y]^{f(a,b)(a-1)}.$ At the same time $$[x,y^{-\delta}x^{-\delta}(xy)^{\delta}]=[x,(xy)^{\delta}][x,y^{-\delta}x^{-\delta}]^{(ab)^{\delta}}=[x,xy]^{((ab)^{\delta}-1)/(ab-1))}[x,y^{-\delta}]^{a^{-\delta}(ab)^{\delta}}=$$
 $$[x,y]^{(a^{\delta}b^{\delta}-1)/(ab-1)+(1-b^{\delta})/(b-1)}.$$ 
 Polynomial $$(a^{\delta}b^{\delta}-1)/(ab-1)+(1-b^{\delta})/(b-1)=
 b^{\delta -1}(a^{\delta -1}-1)+b^{\delta -2}(a^{\delta -2}-1)+\ldots +b(a-1)$$
 is divisible by $(a-1)$ in the standard ring of Laurent polynomials, therefore it is divisible by $(a-1)$ in the non-standard ring. Since the rings are integral domains, $f(a,b)$ is the result of this division.
\end{proof}
\section{$A$-metabelian groups}

The question about varieties of exponential groups was discussed in \cite{AMN}. Let $\Gamma$ be an arbitrary exponential group with exponents in $A$. We set
$$(\Gamma ,\Gamma )_A=\langle (g,h)_{\alpha}=h^{-\alpha}g^{-\alpha}(gh)^{\alpha}, g,h\in\Gamma ,\alpha\in A\rangle _A.$$ The $A$-subgroup $(\Gamma ,\Gamma )_A$ is called the $A$-commutant of the group $\Gamma$.
By \cite{AMN}, a free abelian $A$-group with base $X$ is a free $A$-module and is $A$-isomorphic to the
factor-group of the free $A$-group with base $X$ by its $A$-commutant. The $A$-commutant is called the first $A$ commutant and denoted by $\Gamma ^{(1,A)}$. The $A$-commutant of $\Gamma ^{(1,A)}$ is the second $A$ commutant $\Gamma ^{(2,A)}.$  Then $\Gamma$ is called in \cite{AMN} $n$-step $A$-solvable group if $\Gamma ^{(n,A)}=1.$  Clearly $n$-step $A$-solvable group is $n$-solvable $A$-group. If $\Gamma ^{(2,A)}=1$ we call $\Gamma$ an $A$-metabelian group.  Notice, that $H$ that is elementarily equivalent to a free metabelian group is $\tilde \Z$-metabelian because $\alpha$-commutators belong to $H'$ and commute in $H$.

A  discretely ordered ring is an ordered ring in which there is no element between 0 and 1. 
Let $A$ be a discretely ordered ring and $K$ be a multiplicative $A$-module with generators $a_1,\ldots ,a_n$.
Consider a group algebra $A(K)$. Let $R$ be the $A$ algebra generated by $A(K)$ and for all positive $\delta\in A$, by  series $(a^{\delta}-1)/(a-1)=\Sigma _{0\leq \alpha <\delta} a^{\alpha},$ and
$\Sigma _{0\leq \alpha <\delta}b^{\alpha}\frac{a^{\alpha}-1}{a-1},$  where $a,b\in K.$ 

We define an $A$-metabelian exponential group $M$ with generators $x_1,\ldots ,x_n$ by the following axioms:

1) $M$ is an $A$-metabelian  group.



2)The $A$-commutant $M'$ is an $R$-module.

3)   For any $z,x\in M$ and $\delta\in A,$ $[z,x^{\delta}]=[z,x]^{(a^{\delta}-1)/(a-1)}.$ 

4)    For any $x,y\in M$ and $\delta\in A,$ $y^{-\delta}x^{-\delta}(xy)^{\delta}=[x,y]^{f(a,b)},$ where   $$f(a,b)=[(a^{\delta}b^{\delta}-1)/(ab-1)+(1-b^{\delta})/(b-1)]/(1-a)\in R.$$

Let now $M$ be a free group with generators $x_1,\ldots ,x_n$ in the category of $A$-metabelian exponential groups.    

\begin{lemma} The group $M'$ is an  $R$-module generated by  elements $[x_i,x_j]$.   If $u$ belongs to $M'$, then it can be uniquely written as$$
u = \Pi_{1\leq j<i\leq n}[x_i,x_j]^{\beta_{ij}(a_1,\ldots,a_i)},
$$ where $\beta_{ij}(a_1,\ldots,a_i)\in R$.
\end{lemma}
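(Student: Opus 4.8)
The plan is to prove the two assertions separately: that the commutators $[x_i,x_j]$ generate $M'$ as an $R$-module, by a collecting argument parallel to Proposition~\ref{nf-b}, and that the resulting collected form is unique, which is the substantive part and which I would obtain by comparing $M$ with an explicit coordinate model built over $R$, parallel to Proposition~\ref{nf-2}. First I would record that, $M$ being free, its $A$-abelianization $M/M'=M/(M,M)_A$ is a free abelian $A$-group on $\bar x_1,\dots,\bar x_n$, hence by \cite{AMN} a free $A$-module with basis $a_1,\dots,a_n$; thus every $g\in M$ has the form $g=x_1^{\gamma_1}\cdots x_n^{\gamma_n}u$ with $\gamma_i\in A$ and $u\in M'$, which is what is needed when commutators are expanded below. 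Since ordinary commutators vanish in every abelian $A$-group we have $[M,M]\subseteq M'$, so $M'$ is abelian and $M$ is metabelian in the usual sense; in particular the commutator identities \eqref{eq:comm-identities-1}--\eqref{eq:comm-identities-2} and the Jacobi identity \eqref{eq:Jacobi} hold throughout $M$, and conjugation by $x_k$ on $M'$ is the action of the unit $a_k\in K\subseteq R$.

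For generation, I would use that $M'=(M,M)_A$ is the $A$-subgroup generated by the $\alpha$-commutators $(g,h)_\alpha=h^{-\alpha}g^{-\alpha}(gh)^\alpha$, and that axiom~4) rewrites each of these as $[g,h]^{r}$ for a suitable $r\in R$; since $\bar h$ is a unit of $R$ one recovers $[g,h]$ itself this way (for instance from $\alpha=2$), so $M'$ is exactly the $R$-submodule generated by all ordinary commutators $[g,h]$. Writing $g,h$ in the normal form above and expanding $[g,h]$ via \eqref{eq:comm-identities-1}--\eqref{eq:comm-identities-2}, each factor becomes either $[x_i^{\gamma_i},x_j^{\delta_j}]$, which by axiom~3) (the analogue of Lemma~\ref{le:power-in-comm}) equals $[x_i,x_j]^{\varepsilon_i(\gamma_i)\varepsilon_j(\delta_j)}$ with $\varepsilon_\ell(\delta)=(a_\ell^\delta-1)/(a_\ell-1)\in R$, or a commutator of an element of $M'$ with some $x_k^{\pm1}$ or with another element of $M'$, which is $1$ or an $(a_k^{\pm1}-1)$-multiple of an element of $M'$. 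Hence every element of $M'$ is a product of powers $[x_i,x_j]^{r_{ij}}$ with $r_{ij}\in R$ and $1\le j<i\le n$, and the collecting process of Proposition~\ref{nf-b}, driven by the Jacobi relation \eqref{eq:xi-xj-xk} and its $\delta$-power form \eqref{eq:general-Jacoby} (whose coefficients $(a_k^\delta-1)/(a_k-1)$ lie in $R$ by construction), rewrites it as $\prod_{1\le j<i\le n}[x_i,x_j]^{\beta_{ij}(a_1,\dots,a_i)}$ with $\beta_{ij}\in R$.

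For uniqueness I would introduce the coordinate model $\hat M$, whose underlying set is the set of tuples $(\gamma,\beta)$ with $\gamma=(\gamma_1,\dots,\gamma_n)\in A^n$ and $\beta=(\beta_{ij})_{1\le j<i\le n}$, each $\beta_{ij}$ lying in the sub-$A$-algebra of $R$ generated by $a_1^{\pm1},\dots,a_i^{\pm1}$ and the distinguished series in those variables; the multiplication is $(\gamma,\beta)(\delta,\nu)=\bigl(\gamma+\delta,\,(f_{ij}((\gamma,\beta),(\delta,\nu)))_{ij}\bigr)$ with the $f_{ij}\in\mathcal F$ of Proposition~\ref{pr:mult-normal-forms} evaluated over $R$ (the $A$-valued slots being fed into the $\varepsilon_\ell$ through the $A$-module structure of $K$), and the $A$-exponentiation is given by the analogous coordinate functions. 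Since $R$ is an integral domain — a group algebra of the torsion-free abelian group $K$ over the domain $A$, adjoined the distinguished series — containing all values $\varepsilon_\ell(\delta)$, every $\mathcal F$-expression defines a function on $R$; the identities one needs, namely closure of the multiplication on the constrained tuples, associativity, neutrality of $(0,0)$, and axioms~1)--4) for $\hat M$, are identities between composed $\mathcal F$-expressions that hold in the standard free metabelian group $G$ over $\mathbb Z[a_1^{\pm1},\dots,a_n^{\pm1}]$ (where every admissible tuple is the coordinate tuple of a unique element, by Corollary~\ref{co:normal-forms}) and which are consequences of the ring axioms together with the functional equations $\varepsilon_\ell(0)=0$, $\varepsilon_\ell(1)=1$, $\varepsilon_\ell(z+w)=\varepsilon_\ell(z)+a_\ell^{z}\varepsilon_\ell(w)$, hence persist over $R$. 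Thus $\hat M$ is an $A$-metabelian $A$-exponential group generated, as such, by the basis tuples $\hat x_i$, so by freeness of $M$ there is an $A$-exponential epimorphism $\pi\colon M\to\hat M$ with $\pi(x_i)=\hat x_i$; since $\pi$ preserves the group operations, $A$-powers and conjugation, and since axioms~3)--4) (valid in both $M$ and $\hat M$) express the action of the distinguished generators of $R$ through these, $\pi$ is $R$-linear on $M'$, so it sends a collected expression $x^\gamma c_x^\beta$ to the tuple $(\gamma,\beta)$. If $w\in\ker\pi$, pick a collected form $w=x^\gamma c_x^\beta$ by the generation step; then $(\gamma,\beta)=(0,0)$, whence all $\gamma_i=0$, all $\beta_{ij}=0$ in $R$, and $w=1$. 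So $\pi$ is an isomorphism, and the uniqueness for $u\in M'$ asserted in the lemma transfers from the evident uniqueness in $\hat M$.

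The step I expect to be the main obstacle is precisely this transfer: one must verify that associativity and axioms~1)--4), read as statements about composed $\mathcal F$-expressions over a ring carrying a $K$-action, are genuine identities of that formal calculus rather than accidents of $\mathbb Z[a_1^{\pm1},\dots,a_n^{\pm1}]$. Note that $A$ need not be elementarily equivalent to $\mathbb Z$, so a naive transfer of first-order statements is unavailable, and the device of choosing a large integer $N$ used in the proof of Proposition~\ref{nf-2} fails here because elements of $R$ are infinite series; the argument must be genuinely syntactic. This is the kind of verification carried out in the theory of exponential groups \cite{MR1,MR2,AMN}, and it explains why $R$ was defined to contain exactly the series $\sum_{0\le\alpha<\delta}a^\alpha$ and $\sum_{0\le\alpha<\delta}b^\alpha\frac{a^\alpha-1}{a-1}$ occurring as coefficients in axioms~3)--4) and inside the functions $f_{ij}$.
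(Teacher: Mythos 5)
Your generation argument parallels the paper's in spirit (collecting via the Jacobi identity, using axiom~3 for powers of commutators), though the paper organizes it differently — it works directly with iterated commutators $[x_i^{\beta},(x_{i_1}\cdots x_{i_t})^{\alpha},x_{j_1}^{\alpha_1},\ldots,x_{j_k}^{\alpha_k}]$ and an explicit example, rather than peeling off $\alpha$-commutators from the definition of $(M,M)_A$ as you do. Your uniqueness argument, however, is genuinely different from the paper's: you propose to build a coordinate model $\hat M$ on tuples $(\gamma,\beta)$ with multiplication given by the $\mathcal F$-functions $f_{ij}$, verify the axioms abstractly, and then use freeness of $M$ to produce an epimorphism $\pi\colon M\to\hat M$ that detects the coefficients. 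The paper instead extends Fox derivatives $d_i\colon A(M)\to R$ to the $A$-exponential setting, imposing $d_i(g^\delta)=\frac{g^\delta-1}{g-1}d_i(g)$, and then uses the retraction-type endomorphisms $\varepsilon_I$ (sending $x_i\mapsto x_i$ for $i\in I$ and $x_i\mapsto 1$ otherwise) to isolate the coefficients $\beta_{ij}$ one at a time. The paper's route is more concrete and self-contained: once one accepts that $d_i$ is well-defined, the recovery of $\beta_{ij}$ is a short computation, and it sidesteps the need to verify that a coordinate structure is a group at all.

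The gap in your proposal is precisely the one you flag, and it is substantive rather than cosmetic. The central claim — that associativity and axioms~1)--4) for $\hat M$ ``are identities between composed $\mathcal F$-expressions\ldots and persist over $R$'' — is asserted but not argued. Holding for all integer specializations of the exponent variables in $\mathbb Z[a_1^{\pm1},\ldots,a_n^{\pm1}]$ does not by itself make an equation between $\mathcal F$-expressions a formal identity that one is entitled to evaluate over $R$: $\mathcal F$-expressions are not polynomials in the exponent slots, because $\varepsilon_\ell$ is not polynomial, so there is no immediate ``a polynomial vanishing on $\mathbb Z$ vanishes identically'' step. To make this rigorous you would have to exhibit a syntactic derivation of associativity from the ring axioms together with the functional equations $\varepsilon_\ell(0)=0$, $\varepsilon_\ell(1)=1$, $\varepsilon_\ell(z+w)=\varepsilon_\ell(z)+a_\ell^{z}\varepsilon_\ell(w)$ — exactly the work the paper avoids by using Fox derivatives, which only need to be checked well-defined on the generators-and-relations presentation of $M$ that the generation step produces. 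A secondary gap of the same flavor is the claim that $\pi$ is automatically $R$-linear on $M'$: $\pi$ is an $A$-exponential homomorphism, so it respects conjugation by elements of $M$ and $A$-powers, which gives compatibility with the $A(K)$-action; but $R$ also contains the adjoined series, and compatibility with their action on $M'$ requires an argument (e.g., that those series act on $M'$ as forced by axioms~3)--4), which $\pi$ preserves). Both gaps are fillable in principle, but they constitute the real content of a uniqueness proof, and in their absence the proposal does not yet establish the lemma.
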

\begin{proof} Consider a $\delta$-commutator $y^{-\delta}x^{-\delta}(xy)^{\delta}$. We have, using identities 1) and 2), $$[x,y^{-\delta}x^{-\delta}(xy)^{\delta}]=[x,(xy)^{\delta}][x,y^{-\delta}x^{-\delta}]^{(ab)^{\delta}}=[x,xy]^{((ab)^{\delta}-1)/(ab-1)))}[x,y^{-\delta}]^{a^{-\delta}(ab)^{\delta}}=$$
 $$[x,y]^{(a^{\delta}b^{\delta}-1)/(ab-1)+(1-b^{\delta})/(b-1)}.$$ 

Every commutator can be represented as
$[x_i^{\beta}, (x_{i_1}\ldots x_{i_t})^{\alpha}, x_{j_1}^{\alpha _1},\ldots ,x_{j_k}^{\alpha _k}]$. We can assume that
$i\geq j_1\geq j_2\ldots \geq j_k$ otherwise use the Jacobi identity. If $i$ is greater than or equal to all $i_1,\ldots,i_t$, then 
the representation from Lemma \ref{L6} gives elements $[x_i,x_{i_1}\ldots x_{i_t}]^{f(a_{i_1},\ldots, a_{i_t})}$. Bringing the commutator $[x_i,x_{i_1}\ldots x_{i_t}]$ to normal form and acting by $f(a_{i_1},\ldots, a_{i_t})$ gives elements in normal form.

Suppose now some of $i_1,\ldots ,i_t$ is greater than $i$. Consider a general example
$$[x_1,(x_2x_3)^{\delta}]=[x_1,(x_2x_3)]^{((a_2a_3)^{\delta}-1)/(a_2a_3-1)}=[x_1,x_2x_3]^{\delta}\Pi _{0\leq\alpha<\delta}[x_1,x_2x_3,(x_2x_3)^{\alpha}]=$$
$$[x_1,x_2x_3]^{\delta}\Pi _{0\leq\alpha<\delta}[x_1,x_2x_3,x_3^{\alpha}]^{x_2^{\alpha}}\Pi _{0\leq\alpha<\delta}[x_1,x_2x_3,x_2^{\alpha}]=$$
$$[x_1,x_2x_3]^{\delta} [x_1,x_2x_3,x_3]^{\Sigma _{0\leq\alpha<\delta} a_2^{\alpha}(a_3^{\alpha}-1)/(a_3-1)}[x_1,x_2x_3,x_2]^{\Sigma _{0\leq\alpha<\delta} (a_2^{\alpha}-1)/(a_2-1)}.$$
Using the Jacobi identity we rewrite $[x_1,x_2x_3,x_3]=[x_3,x_2x_3]^{a_1-1}[x_3,x_1]^{-a_2a_3}.$ This finally gives a representation of  the commutator
$[x_1,(x_2x_3)^{\delta}]$ in the normal form. A general case can be treated similarly.

To prove uniqueness   of normal forms we need the analogue of  Fox derivatives from $A(M)$ to $R$. We define a partial  Fox derivative as a linear mapping $d_i: A(M)\rightarrow R$ satisfying the  properties of $d_i$ from Section \ref{se:3.1} and
\begin{equation}
d_i(g^{\delta})=\frac{g^{\delta}-1}{g-1}d_i(g)=d_i(g)\Sigma _{0\leq\alpha <\delta}g^{\alpha}.\end{equation}

A consequence of these is an  equality:
$$Dg^{-\delta}=-g^{-\delta}Dg^{\delta}.$$

One can also compute for $f(a,b)\in R$ \begin{equation} d_i([x,y]^{f(a,b)})=f(a,b)_{inv}d_i([x,y]).
\end{equation}

The uniqueness of the normal form can be proved by using Fox derivatives and the homomorphisms $\varepsilon _I,$ where $I\subseteq \{1,\ldots ,n\}$ and $x_i\varepsilon _j=x_i $ if $i\in I$ and  $x_i\varepsilon _j=1$ if $i\not\in I$ (as it is done in  \cite{MRom} for normal forms in a free metabelian group). For example, we have $u\varepsilon_{\{1,2\}}=[x_2,x_1]^{\beta _{12}(a_1,a_2)},$ hence ${\beta _{12}(a_2,a_1)}$ is defined uniquely. 
Multiply $u$ by $[x_2,x_1]^{-\beta _{21}(a_1,a_2)}$ to get $u'$.

Then $u'\varepsilon _{\{1,2,3\}}=[x_3,x_1]^{\beta _{31}(a_1,a_2,a_3)}[x_3,x_2]^{\beta _{32}(a_1,a_2,a_3)}.$

Then $d_1(u'\varepsilon _{\{1,2,3\}})=\beta _{31(inv)}(a_1,a_2,a_3)(a_3-1)a_1^{-1}a_3^{-1},$
$d_2(u'\varepsilon _{\{1,2,3\}})=\beta _{32(inv)}(a_1,a_2,a_3)(a_3-1)a_2^{-1}a_3^{-1}.$

This allows to compute uniquely $\beta _{31}(a_1,a_2,a_3)$ and $\beta _{32}(a_1,a_2,a_3),$ respectively, and so on.

\end{proof}

\begin{theorem} If $H\equiv G$, where $G$ is a free metabelian group, then $H$ contains a free $\tilde {\mathbb Z}$-metabelian exponential group as a subgroup.  
\end{theorem}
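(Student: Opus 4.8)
The plan is to realize the free $\tilde\Z$-metabelian exponential group on $x_1,\ldots,x_n$ \emph{inside} $H$, matching the normal-form description of $H$ from the structure theorem for $H$ above against the normal-form description of the free $A$-metabelian exponential group $M$ from the preceding (normal form) lemma for $M$. First I would set up the data. Put $A=\tilde\Z$; this is a discretely ordered ring, the order being the one defined from the ring operations (an element is nonnegative iff it is a sum of four squares) and inherited from $\Z$. Write $a_i$ for the image of $x_i$ in $\tilde\Z\bar G_{NS}$, let $K=\langle a_1,\ldots,a_n\rangle$ be the multiplicative $A$-submodule of the unit group of $\tilde\Z\bar G_{NS}$ generated by the $a_i$ (a free abelian $A$-group of rank $n$, identified with $H/H'$), and identify the group algebra $A(K)$ with the subring $\tilde\Z\bar G\le\tilde\Z\bar G_{NS}$. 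By Lemma \ref{le:power-in-comm}, Lemma \ref{L6}, and the $\delta$-commutator formula for $H$ above, the elements $\frac{a^{\delta}-1}{a-1}$ and $\sum_{0\le\alpha<\delta}b^{\alpha}\frac{a^{\alpha}-1}{a-1}$ (for $a,b\in K$ and $\delta\in A$, $\delta>0$) all lie in $\tilde\Z\bar G_{NS}$, so we may take the ring $R$ occurring in the definition of $A$-metabelian exponential groups to be the $A$-subalgebra of $\tilde\Z\bar G_{NS}$ generated by $A(K)$ together with these elements; then $R$ acts on $H'$ by restriction of the $\tilde\Z\bar G_{NS}$-module structure on $H'$ supplied by the structure theorem.

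\textbf{The candidate subgroup.} Let $N=\langle x_1,\ldots,x_n\rangle_{\tilde\Z}$ be the $\tilde\Z$-exponential subgroup of $H$ generated by $x_1,\ldots,x_n$, with $N'=(N,N)_{\tilde\Z}$. I would check that $N$, with the $R$-action on $N'$, satisfies axioms 1)--4) of an $A$-metabelian exponential group. Axiom 1) holds because $(N,N)_A\subseteq H'$, which is abelian: ordinary commutators in $(N,N)_A$ are trivial and, as $H$ is $\tilde\Z$-exponential, the $\alpha$-commutators of elements of the abelian group $H'$ vanish too, whence $N^{(2,A)}=1$ — exactly the reasoning behind the remark that $H$ itself is $\tilde\Z$-metabelian. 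Axioms 3) and 4) hold throughout $H$, being precisely Lemma \ref{L6} and the $\delta$-commutator formula for $H$, hence a fortiori in $N$. Axiom 2) follows by rerunning the collecting computation from the proof of the normal form lemma for $M$ inside $H$: using \eqref{eq:comm-identities-1}, \eqref{eq:comm-identities-2}, the Jacobi identity \eqref{eq:Jacobi}, and axioms 3)--4), every element of $N'$ can be brought to the form $\Pi_{1\le j<i\le n}[x_i,x_j]^{\beta_{ij}(a_1,\ldots,a_i)}$ with $\beta_{ij}\in R$, so $N'$ is precisely the $R$-submodule of $H'$ generated by $\{[x_i,x_j]\}$, and in particular is $R$-stable.

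\textbf{Identifying $N$ with $M$.} By freeness of $M$ in the category of $A$-metabelian exponential groups, the assignment $x_i\mapsto x_i$ extends to a surjective $A$-homomorphism $\pi\colon M\to N$. To prove $\pi$ injective I would compare normal forms: an easy induction on the length of a normal form, using the normal form lemma for $M$ together with axioms 1)--4) (so that $\pi$ is compatible with the $A$-exponentiation, the conjugation action of $K$, and the action of the geometric-series generators of $R$), shows that $\pi$ carries the unique normal form $x_1^{\gamma_1}\cdots x_n^{\gamma_n}\Pi_{1\le j<i\le n}[x_i,x_j]^{\beta_{ij}(a_1,\ldots,a_i)}$ of $w\in M$, with $\gamma_i\in A$ and $\beta_{ij}\in R$, to the element of $H$ with exactly those coordinates (legitimate since $R\subseteq\tilde\Z\bar G_{NS}$). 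As normal forms in $H$ are unique, $\pi(w)=1$ forces all $\gamma_i=0$ and all $\beta_{ij}=0$, i.e.\ $w=1$. Hence $\pi$ is an isomorphism and $N\cong M$ is a free $\tilde\Z$-metabelian exponential group sitting inside $H$, which is the assertion of the theorem.

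\textbf{Main obstacle.} The substantive step is the verification of axiom 2): one must follow the collecting process of the normal form lemma for $M$ term by term inside $H$ and confirm that every commutator of $\tilde\Z$-exponential words in $x_1,\ldots,x_n$ yields coefficients lying in the subring $R$ and not merely in the much larger ring $\tilde\Z\bar G_{NS}$ — this is precisely where the non-standard ``geometric series'' identities packaged in axioms 3)--4) do the work. A secondary point requiring care is the bookkeeping that identifies the abstractly-defined ring $R$ of the definition with the concrete subring of $\tilde\Z\bar G_{NS}$, together with the compatibility of $\pi$ with the two $R$-module structures, so that the uniqueness of normal forms in $M$ (proved via the Fox-derivative argument of that lemma) transfers verbatim to $N\le H$.
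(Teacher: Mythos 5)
Your proposal is correct and takes essentially the same route as the paper, which compresses the whole argument into a single sentence: the normal forms of the free $\tilde\Z$-metabelian exponential group $M$ coincide (as coordinate tuples with entries in $R\subseteq\tilde\Z\bar G_{NS}$) with a subset of the normal forms of $H$, and the multiplication rules agree, whence $M\hookrightarrow H$. You have fleshed out exactly the steps the paper leaves implicit — identifying the candidate $\tilde\Z$-exponential subgroup $N\le H$, verifying axioms 1)--4), realizing the abstractly-defined ring $R$ concretely inside $\tilde\Z\bar G_{NS}$, invoking the universal property of $M$, and then using uniqueness of normal forms on both sides to get injectivity — and you correctly flag the check of axiom 2) (the collecting computation keeping coefficients in $R$) and the compatibility of the two $R$-module structures as the only places where real work occurs.
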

\begin{proof}

We know that $H$ is $\tilde {\mathbb Z}$-exponential group for some $\tilde {\mathbb Z}\equiv {\mathbb Z}.$. And $H'$  is a $\tilde\Z\bar G_{NS}$-module.  Let $M$ be a free $\tilde\Z$-metabelian exponential group defined above. Then normal forms of elements in $M$ are exactly normal forms  (\ref {NF}) of elements in $H$,  therefore $M$ is a subgroup of $H.$
\end{proof}






\begin{thebibliography}{99}\bibitem{AMN} M. G. Amaglobeli, A. G. Miasnikov, T. T. Nadiradze, Varieties of exponential R-groups, Algebra and Logic, 2024, Volume 62, pages 119-136.
\bibitem{AKNS} M. Aschenbrenner, A. Khelif,  E. Naziazeno, T. Scanlon, The logical complexity of finitely generated commutative rings. Int. Math. Res. Not. IMRN 2020, no. 1, 112-166.

\bibitem{Bahmuth} S. Bachmuth, Automorphisms of free metabelian groups, Trans. Amer. Math. Soc. 118 (1965), 93–1104.






\bibitem{DM1} E.\,Daniyarova, A.\,Myasnikov, Theory of interpretations I. Foundations, Int. J. of Algebra and Computation, accepted, see also Math Arxiv.CV34.

\bibitem{DM2} E. Daniyarova, A. Myasnikov, Theory of Interpretations II. Categorical equivalence, Math arxiv, 2025.

\bibitem{GMO1}  A. Garreta, A. Miasnikov, D. Ovchinnikov, Diophantine problems in solvable groups, Bulletin of Mathematical Sciences,  Vol. 10, No. 1 (2020), 27 pages.
DOI: 10.1142/S1664360720500058.

\bibitem{GMO2} A. Garreta, A. Miasnikov, D. Ovchinnikov, , Studying the Diophantine problem in finitely generated rings and algebras via bilinear maps, arXiv:1805.02573v2 [math.RA].



\bibitem{Hodges} W. Hodges, Model theory, Cambridge University Press, 1993.



\bibitem{KMga} O. Kharlampovich, A. Myasnikov, What does a group algebra of a free group ``know" about the group?,    Annals of Pure and Applied Logic, 169 (2018) 523-547.

\bibitem{KMS} O. Kharlampovich, A. Myasnikov, M. Sohrabi,  Rich groups, weak second order logic and applications,  De Gruyter, Groups and Model Theory, GAGTA book 2, 2021, 127-193.
\bibitem{Khelif}  A. Khelif, Bi-interpretabilit\'e et structures QFA: \'etude des groupes r\'esolubles et
des anneaux commutatifs, C. R. Acad. Sci. Paris, Ser. I 345 (2007) 59-61; and Preuves (unpublished).
\bibitem{Kaye} R. Kaye, Models of Peano arithmetic, Clarendon Press, 1991.
\bibitem {Mal1} A. Malcev, On free solvable groups, Soviet Math. Doklady 1 (1960), 65-68.
\bibitem{Miasn} A.G. Myasnikov, The structure of models and a decidability criterium for complete theories of finite dimensional algebras,  Math USSR Izvestia, 34, 2,1990.
\bibitem{Miasn1} A. G. Myasnikov,  Definable invariants and abstract isomorphisms of bilinear mappings, Vychisl. Center Siberian Otdel AN SSSR, Novosibirsk, 1984. 

\bibitem{Myasnikov1990} A. G. Myasnikov,  Definable invariants of bilinear mappings, Siberian Math. Journal, 31(1), 1990, 89-99.

\bibitem{MS} A. G. Myasnikov, M. Sohrabi, Groups elementarily equivalent to a free 2-nilpotent group of finite rank, Algebra and Logic, Vol. 48, No. 2, March 2009.
\bibitem{MR1} A. G. Myasnikov, V.N. Remeslennikov,  Exponential groups, I. Foundations of the theory
and tensor completions, Sib. Mat. Zh., 35, No. 5, (1994) 1106-1118.

\bibitem{MR2} A. Myasnikov and V. Remeslennikov, Exponential groups 2: Extensions of centralizers and tensor completion of CSA groups, 
 International Journal of Algebra and Computation, Vol. 06, No. 06, pp. 687-711 (1996).

\bibitem{MN} A. Miasnikov, A. Nikolaev, Nonstandard polynomials: algebraic properties and elementary equivalence, 2024, 	arXiv:2409.14467 [math.LO], 
https://doi.org/10.48550/arXiv.2409.14467.
\bibitem{MRom}A. Myasnikov, V. Roman'kov
Diophantine cryptography in free metabelian groups:
Theoretical base, Groups Complex. Cryptol. 2014; 6 (2):103-120.


\bibitem {Rom} V. Romankov, On the width of verbal subgroups of solvable groups, Algebra and Logic,
1982, v. 21, no 1,  60-72.







\end{thebibliography}
\end{document}